\documentclass[submission]{eptcs}

\input{main.sty}

\title{On Traces in Categories of Contractions\\\Large\rm Extended Abstract\\[-2ex]}

\author{Aaron David Fairbanks \qquad\qquad Peter Selinger
\institute{Dalhousie University, Canada}}

\date{Dalhousie University}

\begin{document}
\maketitle

\vspace{-0.5cm}
\begin{center}
  \emph{In memory of Phil Scott, 1947--2023}
\end{center}

\begin{abstract}
  Traced monoidal categories are used to model processes that can feed
  their outputs back to their own inputs, abstracting iteration. The
  category of finite dimensional Hilbert spaces with the direct sum
  tensor is not traced. But surprisingly, in 2014, Bartha showed that
  the monoidal subcategory of isometries is traced. The same holds for
  coisometries, unitary maps, and contractions. This suggests the
  possibility of feeding outputs of quantum processes back to their
  own inputs, analogous to iteration. In this paper, we show that
  Bartha's result is not specifically tied to Hilbert spaces, but
  works in any dagger additive category with Moore-Penrose
  pseudoinverses (a natural dagger-categorical generalization of
  inverses).
\end{abstract}

\section{Introduction}

A trace on a symmetric monoidal category $(\Cc,\oplus)$ is an
operation that assigns to each $\map{f}{A\oplus X}{B\oplus X}$ another
map $\map{\tr{X} f}{A}{B}$, satisfying some well-known axioms
{\cite{JSV1996,Malherbe-Scott-Selinger,Selinger-graphical}}. In string
diagrams, traces are represented by looping an output of $f$ back to
the corresponding input, as in the following diagram.  \vspace{-0.1cm}
\[f \;=\;
  \begin{tikzpicture}[yscale=-1]
    \coordinate (l1) at (-1.5, -.3);
    \coordinate (l2) at (-1.5, .3);
    \coordinate (r1) at (1.5, -.3);
    \coordinate (r2) at (1.5, .3);
    \draw (l1) -- (r1);
    \draw (l2) -- (r2);
    \draw [fill=white,rounded corners] (-.75,-.6) rectangle (.75,.6);
    \node at (0,0) {$f$};
    \node [left] at (l1) {$\lab{X}$};
    \node [right] at (r1) {$\lab{X}$};
    \node [left] at (l2) {$\lab{A}$};
    \node [right] at (r2) {$\lab{B}$};
    \coordinate (n) at (0, -1);
    \coordinate (s) at (0, 1);
    \path (n) -- (s);
  \end{tikzpicture}
  \qquad\qquad\qquad
  \tr{X}f \;=\;
  \begin{tikzpicture}[yscale=-1]
    \coordinate (l2) at (-1.5, .3);
    \coordinate (r2) at (1.5, .3);
    \draw (l2) -- (r2);
    \coordinate (ssw) at (-1, -.3);
    \coordinate (sww) at (-1.35, -.65);
    \coordinate (nww) at (-1.35, -.65);
    \coordinate (nnw) at (-1, -1);
    \coordinate (nne) at (1, -1);
    \coordinate (nee) at (1.35, -.65);
    \coordinate (see) at (1.35, -.65);
    \coordinate (sse) at (1, -.3);
    \coordinate (m1) at (0, -.3);
    \draw (m1) -- (sse) to[out=0,in=90] (see) -- (nee) to[out=-90,in=0] (nne) -- (nnw) to[out=180,in=-90] (nww) -- (sww) to[out=90,in=180] (ssw) -- cycle;
    \draw [fill=white,rounded corners] (-.75,-.6) rectangle (.75,.6);
    \node at (0,0) {$f$};
    \node [left] at (l2) {$\lab{A}$};
    \node [right] at (r2) {$\lab{B}$};
    \coordinate (n) at (0, -1);
    \coordinate (s) at (0, 1);
    \path (n) -- (s);
  \end{tikzpicture} 
\vspace{-0.45cm}
\]
In categories of vector spaces, there are two relevant monoidal
structures: the ``multiplicative'' tensor $\otimes$ and the
``additive'' tensor $\oplus$, also known as biproduct or direct sum.
The multiplicative tensor on finite dimensional vector spaces has a
well-known trace (induced by the compact closed structure). But in
this paper, we are interested in the additive tensor.

A natural way to try to define an additive trace on a category of
vector spaces is by the following sum-over-paths formula, which is
motivated by the accompanying string diagrams.
\vspace{-0.1cm}
\[
\small
\qquad
\begin{array}{c@{\qquad\quad}c}
  \smallblocks
  f = \;\begin{blockarray}{ccccccccc}
    &&&&&& \lab{A} & \lab{\oplus} & \lab{X} \\[0.25ex]
    \begin{block}{ccccc(cccc)}
      \lab{B}&&& &&& f\indx{A}{B} && f\indx{X}{B} \\[0.25ex]
      \lab{\oplus}\\
      \lab{X}&&& &&& f\indx{A}{X} && f\indx{X}{X} \\
    \end{block}
  \end{blockarray}\quad
  &
  \tr{X}f = f\indx{A}{B} +
  \comp{f\indx{A}{X}}{f\indx{X}{B}} +
  \comp{f\indx{A}{X}}{f\indx{X}{X}}{f\indx{X}{B}} +
  \comp{f\indx{A}{X}}{(f\indx{X}{X})^2}{f\indx{X}{B}} + \cdots 
  \\[-1ex]
  \begin{tikzpicture}[xscale=-1,yscale=-1]
    \coordinate (l1) at (-1.65, -.6125);
    \coordinate (l2) at (-1.65, .6125);
    \node [junction] (ml1) at (-1, -.6125) {};
    \node [junction] (ml2) at (-1, .6125) {};
    \node [junction] (mr1) at (1, -.6125) {};
    \node [junction] (mr2) at (1, .6125) {};
    \coordinate (r1) at (1.65, -.6125);
    \coordinate (r2) at (1.65, .6125);
    \draw (l1) [latex'-] -- (ml1);
    \draw (l2) [latex'-] -- (ml2);
    \draw (ml1) [latex'-] -- node [arr,pos=.75] {\small $f\indx{A}{X}$} (mr2);
    \draw (ml2) [ghost] -- (mr1);
    \draw (ml1) [latex'-] -- node [arr] {\small $f\indx{X}{X}$} (mr1);
    \draw (ml2) [latex'-] -- node [arr,pos=.725] {\small $f\indx{X}{B}$} (mr1);
    \draw (ml2) [latex'-] -- node [arr] {\small $f\indx{A}{B}$} (mr2);
    \draw (mr1) [latex'-] -- (r1);
    \draw (mr2) [latex'-] -- (r2);
    \coordinate (ssw) at (-1.4, -.6125);
    \coordinate (sww) at (-1.6, -.8125);
    \coordinate (nww) at (-1.6, -1.0125);
    \coordinate (nnw) at (-1.4, -1.2125);
    \coordinate (nne) at (1.4, -1.2125);
    \coordinate (nee) at (1.6, -1.0125);
    \coordinate (see) at (1.6, -.8125);
    \coordinate (sse) at (1.4, -.6125);
    \path (mr1) -- (sse) to[out=0,in=90] (see) -- (nee) to[out=-90,in=0] (nne) -- (nnw) to[out=180,in=-90] (nww) -- (sww) to[out=90,in=180] (ssw) -- (ml1);
    \node [right] at (l1) {$\lab{X}$};
    \node [left] at (r1) {$\lab{X}$};
    \node [right] at (l2) {$\lab{B}$};
    \node [left] at (r2) {$\lab{A}$};
    \coordinate (n) at (0, -1.2125);
    \coordinate (s) at (0, 1.2125);
    \path (n) -- (s);
  \end{tikzpicture}
  &
  \begin{tikzpicture}[xscale=-1,yscale=-1]
    \coordinate (l2) at (-1.65, .6125);
    \node [junction] (ml1) at (-1, -.6125) {};
    \node [junction] (ml2) at (-1, .6125) {};
    \node [junction] (mr1) at (1, -.6125) {};
    \node [junction] (mr2) at (1, .6125) {};
    \coordinate (r2) at (1.65, .6125);
    \draw (l2) [latex'-] -- (ml2);
    \draw (ml1) [latex'-] -- node [arr,pos=.75] {\small $f\indx{A}{X}$} (mr2);
    \draw (ml2) [ghost] -- (mr1);
    \draw (ml1) [latex'-] -- node [arr] {\small $f\indx{X}{X}$} (mr1);
    \draw (ml2) [latex'-] -- node [arr,pos=.725] {\small $f\indx{X}{B}$} (mr1);
    \draw (ml2) [latex'-] -- node [arr] {\small $f\indx{A}{B}$} (mr2);
    \draw (mr2) [latex'-] -- (r2);
    \coordinate (ssw) at (-1.4, -.6125);
    \coordinate (sww) at (-1.6, -.8125);
    \coordinate (nww) at (-1.6, -1.0125);
    \coordinate (nnw) at (-1.4, -1.2125);
    \coordinate (nne) at (1.4, -1.2125);
    \coordinate (nee) at (1.6, -1.0125);
    \coordinate (see) at (1.6, -.8125);
    \coordinate (sse) at (1.4, -.6125);
    \draw (mr1) [latex'-] -- (sse) to[out=0,in=90] (see) -- (nee) to[out=-90,in=0] (nne) -- (nnw) to[out=180,in=-90] (nww) -- (sww) to[out=90,in=180] (ssw) -- (ml1);
    \node [right] at (l2) {$\lab{B}$};
    \node [left] at (r2) {$\lab{A}$};
    \coordinate (n) at (0, -1.2125);
    \coordinate (s) at (0, 1.2125);
    \path (n) -- (s);
  \end{tikzpicture}
\end{array}
\vspace{-0.4cm}
\]
The idea is similar to that of matrix multiplication, which can be
formulated as a sum over all paths from a given input coordinate to a
given output coordinate. However, the sum may not converge (supposing
there is even any notion of convergence), and so the sum-over-paths
formula does not define a total operation. Indeed, there is no totally
defined trace with respect to $\oplus$ on any category of finite (or
infinite) dimensional vector spaces {\cite{Hoshino}}.

Therefore, it came as a surprise when Bartha showed in
{\cite{Bartha_trace}} that the category of finite dimensional Hilbert
spaces and \emph{isometries} has a well-defined additive trace. In
particular, not only does Bartha's trace of an isometry always exist,
but it is again an isometry. By duality, Bartha's trace also works for
coisometries, and therefore also for unitary maps. Moreover,
Andr\'es-Mart\'inez pointed out that Bartha's trace further
generalizes to all contractions {\cite{Andres-Martinez}}. These
results suggest that there might be some physical interpretation of
loops in quantum systems, but we do not know what it is.

In this paper, we show that Bartha's result is not specifically tied
to Hilbert spaces, but works in any dagger additive category with
suitable additional structure. The specific additional structure that
we need to assume is the existence of Moore-Penrose pseudoinverses.

In a nutshell, a pseudoinverse of an arrow $\map{f}{A}{B}$ is an arrow
$\map{\mpi{f}}{B}{A}$ such that both $\comp{\mpi{f}}{f}$ and
$\comp{f}{\mpi{f}}$ are self-adjoint and $\comp{f}{\mpi{f}}{f}=f$ and
$\comp{\mpi{f}}{f}{\mpi{f}}=\mpi{f}$. Pseudoinverses are unique when
they exist, and they generalize inverses. Moreover, the definition of
pseudoinverse is purely algebraic and makes sense in any dagger category
{\cite{Cockett-Lemay}}.

Our main result is the following:

\begin{theorem}\label{thm:main}
  Given any dagger additive category with pseudoinverses, there is a totally
  defined trace on each of the following monoidal subcategories:
  \begin{itemize}
  \item the unitaries,
  \item the isometries,
  \item the coisometries, and
  \item the contractions.
  \end{itemize}
  Moreover, in the cases of unitaries and contractions, which are
  dagger monoidal subcategories, the trace is a dagger trace.
\end{theorem}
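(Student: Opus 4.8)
The plan is to \emph{define} the trace by the only total version of the divergent sum‑over‑paths formula that makes sense: replace the would‑be geometric series $\sum_{n\ge 0}(f\indx{X}{X})^n$ -- morally ``$(\id{X}-f\indx{X}{X})\inv$'' -- by a Moore--Penrose pseudoinverse. That is, for $\map{f}{A\oplus X}{B\oplus X}$ I set
\[
  \tr{X}f \;:=\; f\indx{A}{B} \;+\; f\indx{X}{B}\circ\mpi{(\id{X}-f\indx{X}{X})}\circ f\indx{A}{X}.
\]
Since every arrow has a (unique) pseudoinverse by hypothesis, this is a totally defined operation on \emph{all} morphisms $\map{f}{A\oplus X}{B\oplus X}$ of the ambient category; the content of the theorem is that it restricts to a (dagger) trace on each of the four subcategories. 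I would organize the work around the largest class, the contractions, and exploit the following logical economy: the trace axioms are universally quantified equations in the ambient category, so once I have verified them for all \emph{contractions} they hold a fortiori when the morphisms are restricted to be isometries, coisometries, or unitaries -- provided only that the trace maps each of these subcategories into itself. Hence there are really three tasks: (a) show the formula preserves contractions and satisfies the trace axioms there; (b) show it preserves isometries; (c) deduce from (b), via the dagger, that it preserves coisometries, and hence preserves unitaries by intersection. The dagger‑trace clause will come for free, since $(\tr{X}f)^\dagger=\tr{X}(f^\dagger)$ follows immediately from $\mpi{(h^\dagger)}=(\mpi{h})^\dagger$ and additivity of the dagger.

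The first substantive ingredient is a purely algebraic lemma about $\id{X}-f\indx{X}{X}$. Observe that $f\indx{X}{X}=\pi_X\circ f\circ\iota_X$ is a corner of a contraction -- the biproduct injection $\iota_X$ and projection $\pi_X$ are an isometry and a coisometry in any dagger additive category, contractions are closed under composition, and in all four cases $f$ is itself a contraction -- so $f\indx{X}{X}$ is a contraction. The lemma I would prove is: \emph{for any contraction $\map{g}{X}{X}$, the arrow $\id{X}-g$ is range‑symmetric}, i.e.\ $(\id{X}-g)\circ\mpi{(\id{X}-g)}=\mpi{(\id{X}-g)}\circ(\id{X}-g)$; writing $Q$ for this common idempotent and $P=\id{X}-Q$ for its complement, one moreover gets $g\circ P=P=P\circ g$ and $g^\dagger\circ P=P=P\circ g^\dagger$. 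In $\fdhilb$ this packages together the Cauchy--Schwarz fact that a contraction $g$ satisfies $gv=v\Rightarrow g^\dagger v=v$ with the absence of nontrivial Jordan blocks on the unit circle; abstractly it must be re‑derived from the defining equations of the pseudoinverse and the L\"owner order on self‑adjoint endomorphisms. \emph{This lemma -- and its reappearance in the sliding axiom below -- is the step I expect to be the main obstacle}, precisely because the Hilbert‑space tools (spectral theory, Cauchy--Schwarz) are unavailable and must be replaced by bare manipulation of projections and pseudoinverses.

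Granting the lemma, the ``Bartha surprise'' that $\tr{X}f$ stays in the subcategory becomes a computation. For an isometry $f$ I would expand $(\tr{X}f)^\dagger\circ(\tr{X}f)$ and simplify using: the block identities extracted from $f^\dagger\circ f=\id{A\oplus X}$; the consequence (of those identities together with the lemma) that $f\indx{X}{B}\circ P=0$ and $P\circ f\indx{A}{X}=0$, so the portion of the pseudoinverse that deviates from a genuine inverse is killed on both sides; and the identities $(\id{X}-f\indx{X}{X})\circ\mpi{(\id{X}-f\indx{X}{X})}=Q$ and $\mpi{(h^\dagger)}=(\mpi{h})^\dagger$. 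Everything should collapse to $\id{A}$. For a contraction $f$ one runs the same manipulation while tracking the L\"owner inequality, obtaining $(\tr{X}f)^\dagger\circ(\tr{X}f)\sqsubseteq\id{A}$. Preservation of isometries then gives preservation of coisometries by applying the dagger, and of unitaries by intersection.

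Finally I would check the trace axioms in the category of contractions. Left and right tightening (naturality in $A$ and $B$), the superposing axiom $\tr{X}(f\oplus g)=(\tr{X}f)\oplus g$, the vanishing axioms $\tr{I}f=f$ and $\tr{X\oplus Y}f=\tr{X}\tr{Y}f$, and yanking $\tr{X}\sigma_{X,X}=\id{X}$ are all routine bookkeeping with the biproduct and elementary pseudoinverse identities (yanking, for instance, reduces to $\mpi{\id{X}}=\id{X}$). The one axiom with genuine content is sliding/dinaturality, which reduces to a ``push‑through'' identity $\mpi{(\id{X}-g\circ h)}\circ g=g\circ\mpi{(\id{Y}-h\circ g)}$, where $\map{g}{Y}{X}$ is a morphism of the subcategory and $\map{h}{X}{Y}$ is a corner of $f$, hence a contraction: for honest inverses this is the triviality $(\id{X}-g h)g=g(\id{Y}-h g)$, but for pseudoinverses it again rests on the range‑symmetry of $\id{X}-g h$ and $\id{Y}-h g$ supplied by the lemma, together with a little work relating the images of $g$, $\id{X}-g h$, and $\id{Y}-h g$. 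Assembling (a)--(c) and the axiom checks yields the trace on contractions, hence by restriction on isometries, coisometries, and unitaries, with the dagger‑trace clause in the two dagger‑closed cases.
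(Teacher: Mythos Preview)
Your proposal is plausible and the key technical lemma you identify---that $\id{X}-g$ is EP (range-symmetric) whenever $g$ is a contraction---is exactly the paper's \cref{lem:contraction_complementary}, so you have correctly located the heart of the matter. The preservation-of-isometries computation you sketch is essentially the paper's \cref{lem:kerim_isom}. But your overall strategy diverges from the paper's in one decisive respect, and the paper's route is substantially shorter.

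You propose to define the trace by the pseudotrace formula and then verify all of the trace axioms directly on contractions, singling out dinaturality (your ``push-through'' identity $\mpi{(\id{X}-gh)}\,g = g\,\mpi{(\id{Y}-hg)}$) as the hard one. The paper sidesteps all of this. It observes that the formula you wrote down coincides, on contractions, with the \emph{kernel-image trace} of Malherbe--Scott--Selinger, which is already known to be a partial trace on \emph{any} additive category. Hence none of the trace axioms need to be checked: one only needs (i) that the kernel-image trace is \emph{defined} on every contraction, and (ii) that it lands back in the relevant subcategory. Pseudoinverses enter only in (i), via the EP lemma and \cref{prop:ep-svd}. So the paper's proof is: known partial trace $+$ totality on contractions $+$ closure under isometries/contractions $\Rightarrow$ total trace; duality and intersection handle coisometries and unitaries; the dagger-trace clause is immediate because the kernel-image trace is self-dual.

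Two further remarks. First, your plan to get ``trace of a contraction is a contraction'' by rerunning the isometry calculation while ``tracking the L\"owner inequality'' is riskier than it looks: the isometry computation uses equalities in both directions, and naively replacing them by inequalities does not obviously collapse. The paper instead embeds the contraction as a block of an isometry and reuses the isometry computation verbatim (\cref{lem:kerim_contr}); you might find that cleaner. Second, the paper explicitly warns (Appendix~A) that the pseudotrace is \emph{not} a trace on the ambient category---dinaturality fails for non-contractions---so your direct-verification route really must restrict to contractions throughout, and your push-through identity genuinely depends on both $gh$ and $hg$ being contractions. It can be made to work, but the kernel-image shortcut makes the whole issue disappear.
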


After reviewing some background material in
\cref{sec:background}, we introduce contractions in
\cref{sec:contractions} and pseudoinverses in
\cref{sec:pseudo}, and prove some of their required properties.
\cref{sec:main} is devoted to the proof of the main theorem.

\vspace{-2ex}
\paragraph{Acknowledgements.}

We thank JS Lemay and Priyaa Varshinee Srinivasan for helpful
discussions.
\vspace{-2ex}

\section{Background}\label{sec:background}

\subsection{Dagger categories}

We recall some basic definitions and properties of dagger categories
to fix the notation for the rest of the paper. For a more detailed
treatment, see {\cite{Selinger_compact,Heunen-Vicary-2019,karvonen:thesis}}.

\begin{definition}[Dagger category]
  A \emph{dagger category} is a category equipped with an
  identity-on-objects involutive contravariant functor, denoted
  $(\thg)^\dag$.  In other words, for $\map{f}{A}{B}$, we have
  $\map{f^\dag}{B}{A}$, and we have the following properties:
  \begin{itemize}
    \item $f^{\dag\dag} = f$,
    \item $(\id{A})^\dag = \id{A}$, and
    \item $(\comp{f}{g})^\dag = \comp{g^\dag}{f^\dag}$.
  \end{itemize}
\end{definition}

For example, the category $\hilbc$ of Hilbert spaces and bounded
linear maps is a dagger category. Its full subcategory $\fdhilbc$ of finite
dimensional Hilbert spaces is also a dagger category.

\begin{definition}[Properties of arrows]
  An arrow $\map{f}{A}{B}$ in a dagger category is called an
  \emph{isometry} if $\comp{f}{f^\dag} = \id{A}$, a \emph{coisometry} if
  $\comp{f^\dag}{f} = \id{B}$, and \emph{unitary} if it is an isometry
  and a coisometry. Equivalently, $f$ is unitary if it is invertible
  and $f\inv = f^\dag$. An arrow $\map{f}{A}{A}$ is \emph{self-adjoint} (or
  \emph{hermitian}) if $f=f^\dag$.
\end{definition}

In this paper, we use the symbol $\oplus$ to denote the monoidal
product, because we are mainly interested in monoidal structures that
are induced by biproducts.

\begin{definition}[Dagger monoidal category]
  A \emph{dagger monoidal category} is a dagger category that is also
  monoidal, such that $(\thg)^\dag$ is a strict monoidal functor. More
  explicitly, this means that the monoidal structure isomorphisms
  (i.e., associators and unitors) are unitary, and for all arrows $f$
  and $g$, we have
  \[
  (f \oplus g)^\dag = f^\dag \oplus g^\dag.
  \]
\end{definition}

In a dagger (monoidal) category, the isometries, the coisometries, and
the unitary maps each form a (monoidal) subcategory, i.e., they are
closed under compositions (and monoidal products).

\begin{definition}[Dagger finite biproduct category]
  A \emph{dagger finite biproduct category} is a dagger category that
  also has finite biproducts, such that the projection maps
  $\map{\pi_i}{A_1\oplus A_2}{A_i}$ and the inclusion maps
  $\map{\iota_i}{A_i}{A_1\oplus A_2}$ satisfy $\pi_i = \iota_i^\dag$.
\end{definition}

The dagger biproducts of course also form a dagger monoidal
structure. As usual in any category with finite biproducts, there is a
zero object $0$, and we can define the addition of arrows
$\map{f,g}{A}{B}$ in the usual way by
$f+g = \justmap{A}{\justxmap{A\oplus A}{\justmap{B\oplus
      B}{B}}{f\oplus g}}$.  There are also zero maps
$\map{0}{A}{\justmap{0}{B}}$. Indeed, every (dagger) finite biproduct
category is enriched over commutative monoids.

\begin{definition}[Negatives, additive category]
  We say that a (dagger) finite biproduct category \emph{has
  negatives} if for every $\map{f}{A}{B}$, there exists
  $\map{-f}{A}{B}$ such that $f + (-f) = 0$. A (dagger) finite
  biproduct category with negatives is called a (dagger)
  \emph{additive} category.
\end{definition}

\begin{definition}[Positive map]
  A map $\map{a}{A}{A}$ in a dagger category is \emph{positive} if
  there exists $\map{f}{A}{B}$ with $a = \comp{f}{f^\dag}$.
\end{definition}

Positive maps in $\hilbc$ are positive operators in the usual sense.
Every positive map is self-adjoint, and the sum of positive maps is
positive if we have dagger biproducts. Given two maps
$\map{f,g}{A}{A}$, in a dagger finite biproduct category, we say that
$f\leq g$ if there exists some positive $a$ such that $g=f+a$.

\begin{lemma}\label{lem:conjugate}
  Let $\map{f,g}{A}{A}$ and assume $f\leq g$. (a) For all
  $\map{h}{A}{B}$, we have
  $\comp{h^\dag}{f}{h}\leq \comp{h^\dag}{g}{h}$. (b) For all
  $\map{f',g'}{A'}{A'}$ with $f'\leq g'$, we have
  $f\oplus f'\leq g\oplus g'$.
\end{lemma}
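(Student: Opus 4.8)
The plan is to prove both parts directly, in each case by producing an explicit positive ``difference.'' Recall that $f\leq g$ unfolds to $g=f+a$ for some positive $\map{a}{A}{A}$, and that positivity of $a$ means $a=\comp{k}{k^\dag}$ for some $\map{k}{A}{C}$. The only structural facts I will use are that a dagger finite biproduct category is enriched over commutative monoids --- hence composition is bilinear with respect to $+$ --- and that $(\thg)^\dag$ is a strict monoidal functor, so it commutes with $\oplus$ on arrows.

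For part (a), fix $\map{h}{A}{B}$ and write $g=f+a$ with $a=\comp{k}{k^\dag}$. Bilinearity of composition gives $\comp{h^\dag}{g}{h}=\comp{h^\dag}{f}{h}+\comp{h^\dag}{a}{h}$, so it is enough to exhibit $\comp{h^\dag}{a}{h}$ as positive. That is immediate:
\[
  \comp{h^\dag}{a}{h}\;=\;\comp{h^\dag}{k}{k^\dag}{h}\;=\;\comp{(\comp{h^\dag}{k})}{(\comp{h^\dag}{k})^\dag},
\]
using $(\comp{h^\dag}{k})^\dag=\comp{k^\dag}{h}$, and the right-hand side is positive by definition (witnessed by $\map{\comp{h^\dag}{k}}{B}{C}$). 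Hence $\comp{h^\dag}{f}{h}\leq\comp{h^\dag}{g}{h}$.

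For part (b), write $g=f+a$ and $g'=f'+a'$ with $a=\comp{k}{k^\dag}$ and $a'=\comp{k'}{(k')^\dag}$; the candidate difference is $a\oplus a'$. Two small points need checking. First, that the biproduct functor is additive on arrows, so that
\[
  g\oplus g'\;=\;(f+a)\oplus(f'+a')\;=\;(f\oplus f')+(a\oplus a');
\]
this is a standard property of biproduct categories, and if a proof is wanted it drops out of the identity $f\oplus g=\comp{\pi_1}{f}{\iota_1}+\comp{\pi_2}{g}{\iota_2}$ together with bilinearity of composition. Second, that $a\oplus a'$ is positive: by functoriality of $\oplus$ and the compatibility $(k\oplus k')^\dag=k^\dag\oplus(k')^\dag$,
\[
  a\oplus a'\;=\;(\comp{k}{k^\dag})\oplus(\comp{k'}{(k')^\dag})\;=\;\comp{(k\oplus k')}{(k^\dag\oplus(k')^\dag)}\;=\;\comp{(k\oplus k')}{(k\oplus k')^\dag},
\]
which is positive by definition. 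Hence $f\oplus f'\leq g\oplus g'$.

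I do not expect a real obstacle: the lemma is bookkeeping with the commutative-monoid enrichment and the strict-monoidal dagger. The one step worth a second look is the additivity of $\oplus$ on arrows in part (b) --- it should be justified or explicitly cited rather than assumed, since a general monoidal functor need not preserve $+$ --- but in a biproduct category it holds, and the rest is one-line manipulation.
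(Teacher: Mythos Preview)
Your proposal is correct and follows essentially the same approach as the paper's proof: write $g=f+a$ and $g'=f'+a'$ with $a,a'$ positive, distribute, and observe that $\comp{h^\dag}{a}{h}$ and $a\oplus a'$ are positive. The paper simply asserts the positivity of these two arrows as ``easy to see,'' whereas you spell out the witnesses explicitly and flag the additivity of $\oplus$ on arrows; both are welcome clarifications but do not change the argument.
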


\begin{proof}
  Since $g=f+a$ and $g' = f'+a'$ for some positive $a$ and $a'$, we have
  \[\comp{h^\dag}{g}{h}=\comp{h^\dag}{f}{h}+\comp{h^\dag}{a}{h} \quad
    \text{and} \quad g \oplus g' = (f + a) \oplus (f'+ a') = (f \oplus f') + (a \oplus
  a').\] It is easy to see $\comp{h^\dag}{a}{h}$ and $a \oplus a'$
  are positive, which implies both claims.
\end{proof}

\subsection{Matrices}

In a finite biproduct category, consider objects $A = A_1
\oplus \cdots \oplus A_m$ and $B=B_1 \oplus \cdots \oplus B_n$. It is
well-known that maps $\map{f}{A}{B}$ are in one-to-one correspondence with
matrices $(f\indx{i}{j})$, where $\map{f\indx{i}{j}}{A_i}{B_j}$. We write
\[
\smallblocks
f\; = \;\;\; \begin{blockarray}{ccccccccccc}
  &&&&&& \lab{A_1} & \lab{\oplus} & \lab{\cdots} & \lab{\oplus} & \lab{A_m} \\[0.25ex]
  \begin{block}{ccccc(cccccc)}
    \lab{B_1}&&& &&& f\indx{1}{1} && \cdots && f\indx{m}{1} \\
    \lab{\oplus}\\
    \raisebox{.5ex}{\labvdots}&&& &&& \raisebox{.25ex}{$\vdots$} && \,\;\raisebox{.25ex}{$\ddots$}\;\, && \raisebox{.25ex}{$\vdots$} \\[0.25ex]
    \lab{\oplus}\\
    \lab{B_n}&&& &&& f\indx{1}{n} && \cdots && f\indx{m}{n} \\
  \end{block}
\end{blockarray}~~~.
\vspace{-0.25cm}
\]
Then composition of morphisms coincides with the usual formula for
matrix multiplication, given by
\[
(\comp{f}{g})\indx{i}{k} = \sum_{j \in J}\comp{f\indx{i}{j}}{g\indx{j}{k}}.
\]
In a dagger finite biproduct category, the dagger of a morphism
coincides with the adjoint of its matrix:
\[
(f^\dag)\indx{j}{i} = (f\indx{i}{j})^\dag.
\]

Given a matrix
\[ \map{f = \pmat{f\indx{1}{1}&f\indx{2}{1}\\f\indx{1}{2}&f\indx{2}{2}}}{A_1\oplus A_2}{B_1\oplus B_2},
\]
we call each $\map{f\indx{i}{j}}{A_i}{B_j}$ a \emph{component} of $f$, we call
$\map{\psmall{f\indx{i}{1}\\f\indx{i}{2}}}{A_i}{B_1\oplus B_2}$ a \emph{column} of $f$, and
we call $\map{\pmat{f\indx{1}{j}~f\indx{2}{j}}}{A_1\oplus A_2}{B_j}$ a \emph{row}
of $f$. We use analogous terminology for larger matrices.

\begin{remark}
  In a dagger finite biproduct category, an arrow of the form
  \[
  \smallblocks
  \begin{blockarray}{ccccccccccc}
    &&&&&& \lab{A_1} & \lab{\oplus} & \lab{\cdots} & \lab{\oplus} & \lab{A_m} \\[0.25ex]
    \begin{block}{ccccc(cccccc)}
      \lab{B}&&& &&& f_1 && \cdots && f_m \\
    \end{block}
  \end{blockarray}~~~
  \vspace{-0.25cm}
  \]
  is an isometry if and only if $\comp{f_i}{f_i^\dag} = \id{A_i}$ for
  all $i$ and $\comp{f_i}{f_j^\dag} = 0$ for $i\neq j$.
\end{remark}

\begin{remark}\label{rem:isometry_component}
  In a dagger additive category, every isometry
  is a component of a unitary. Indeed, suppose $\map{f}{A}{B}$ is an
  isometry. Then the following arrow is unitary.
  \[
  \smallblocks
  \begin{blockarray}{ccccccccc}
      &&&&&& ~~~~\lab{A} & \hspace{.5cm}\lab{\oplus} & \lab{B} \\[0.25ex]
      \begin{block}{ccccc(cccc)}
        \lab{A}&&& &&& ~~~~0 && f^\dag \\[0.25ex]
        \lab{\oplus}\\
        \lab{B}&&& &&& ~~~~f && \id{B} - \comp{f^\dag}{f} \\
      \end{block}
    \end{blockarray}
  \vspace{-0.25cm}
  \]
\end{remark}

\subsection{Dagger idempotents}

\begin{definition}[Dagger idempotents]
  A arrow $\map{p}{A}{A}$ is called a \emph{dagger idempotent} (or
  \emph{projection}) if $p = \comp{p}{p}=p^{\dag}$.
\end{definition}

Whenever $\map{f}{B}{A}$ is an isometry, then $p = \comp{f^\dag}{f}$
is a dagger idempotent. If $p$ is of this form, we say that $p$ is
\emph{dagger split}. When dagger splittings exist, they are unique up
to unitary isomorphism. It is well-known that every dagger category
can be fully embedded in a dagger category with all dagger splittings,
called its \emph{dagger idempotent completion}
{\cite{Selinger-idem}}. Moreover, all structure of interest (e.g.,
monoidal structure, biproducts, addition, negatives, and, as we will
later introduce, pseudoinverses \cite{Cockett-Lemay}) on a dagger
category transports to its dagger idempotent completion.

\begin{definition}[Complementary idempotents]
  Two idempotents $\map{p,q}{A}{A}$ in a finite biproduct category are
  \emph{complementary} if $p + q = \id{A}$ and $\comp{p}{q} = 0 =
  \comp{q}{p}$.
\end{definition}

If the category has negatives, complements always exist, because
whenever $p$ is a (dagger) idempotent, so is $1-p$. It is obvious that
the complement is unique in that case. Interestingly, uniqueness even
holds without assuming negatives, because if both $q_1$ and $q_2$ are
complements of $p$, we have $q_1 = \comp{q_1}{(p + q_2)} =
\comp{q_1}{q_2} = \comp{(p + q_1)}{q_2} = q_2$.

Complementary dagger idempotents are an algebraic abstraction
of orthogonal complement subspace projections. 

\begin{lemma}[Direct sum decomposition]\label{lem:proj_decompose}
  Consider a (dagger) finite biproduct category in which all (dagger)
  idempotents (dagger) split. Given an object $A$ with complementary
  idempotents $\map{p,q}{A}{A}$, there exist objects $A_1,A_2$ with
  $A = A_1 \oplus A_2$ such that
  \[
  \smallblocks
  p = \;\begin{blockarray}{ccccccccc}
    &&&&&& \lab{A_1} & \lab{\oplus} & \lab{A_2}\hpad \\[0.25ex]
    \begin{block}{ccccc(cccc)}
      \lab{A_1}&&& &&& \justid && 0\vpad\hpad \\[0.25ex]
      \lab{\oplus}\\
      \lab{A_2}&&& &&& 0 && 0\hpad \\
    \end{block}
  \end{blockarray}\;\;\;
  \quad\mbox{and}\quad
  q = \;\begin{blockarray}{ccccccccc}
    &&&&&& \lab{A_1} & \lab{\oplus} & \lab{A_2}\hpad \\[0.25ex]
    \begin{block}{ccccc(cccc)}
      \lab{A_1}&&& &&& 0 && 0\vpad\hpad \\[0.25ex]
      \lab{\oplus}\\
      \lab{A_2}&&& &&& 0 && \justid\hpad \\
    \end{block}
  \end{blockarray}\;\;\;.
  \vspace{-3ex}
  \]
  Moreover, the factorization is unique up to (unitary)
  isomorphisms of the direct sum factors.
\end{lemma}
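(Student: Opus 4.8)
The plan is to obtain the decomposition directly from (dagger) splittings of $p$ and $q$. Since all (dagger) idempotents split, choose a (dagger) splitting of $p$, consisting of an object $A_1$ and maps $\map{r_1}{A}{A_1}$, $\map{s_1}{A_1}{A}$ with $\comp{s_1}{r_1}=\id{A_1}$ and $\comp{r_1}{s_1}=p$ (and $r_1=s_1^\dag$ in the dagger case), and likewise a (dagger) splitting $(A_2,r_2,s_2)$ of $q$. Assemble these into $\map{w}{A_1\oplus A_2}{A}$ with columns $s_1,s_2$ and $\map{v}{A}{A_1\oplus A_2}$ with rows $r_1,r_2$; in the dagger case $v=w^\dag$, so once $w$ is shown to be invertible it is automatically unitary.

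The one step with any real content is to check that $w$ and $v$ are mutually inverse; everything else is matrix bookkeeping. On the one hand $\comp{v}{w}=\comp{r_1}{s_1}+\comp{r_2}{s_2}=p+q=\id{A}$, so the hypothesis $p+q=\id{A}$ is exactly what forces this composite to be the identity. On the other hand, in $\comp{w}{v}$ the diagonal components are $\comp{s_i}{r_i}=\id{A_i}$, while the off-diagonal components vanish: the splitting identities give $\comp{s_1}{p}=s_1$ and $\comp{q}{r_2}=r_2$, hence $\comp{s_1}{r_2}=\comp{s_1}{p}{q}{r_2}=0$ because $\comp{p}{q}=0$, and symmetrically $\comp{s_2}{r_1}=0$. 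Thus $\comp{w}{v}=\id{A_1\oplus A_2}$. In short, the splitting maps of two complementary idempotents always assemble into an isomorphism (a unitary, in the dagger case) $A_1\oplus A_2\iso A$, with $p+q=\id{A}$ responsible for its surjectivity and $\comp{p}{q}=\comp{q}{p}=0$ for the orthogonality of the two summands; I do not expect any serious obstacle beyond keeping the composition order straight.

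It then remains to identify $A$ with $A_1\oplus A_2$ along $w$ and read off the transported idempotents. Transporting $p$ gives $\comp{w}{p}{v}=\comp{w}{r_1}{s_1}{v}$, and using the vanishing off-diagonals once more, $\comp{w}{r_1}=\pi_1$ and $\comp{s_1}{v}=\iota_1$, so the transported $p$ equals $\comp{\pi_1}{\iota_1}$, which is the claimed block matrix $\psmall{\justid&0\\0&0}$; similarly the transported $q$ is $\comp{\pi_2}{\iota_2}=\psmall{0&0\\0&\justid}$. For uniqueness, observe that any decomposition $A=A_1\oplus A_2$ putting $p$ and $q$ in these forms exhibits $(\iota_1,\pi_1)$ as a (dagger, since $\pi_1=\iota_1^\dag$) splitting of $p$ and $(\iota_2,\pi_2)$ as one of $q$; by uniqueness of (dagger) idempotent splittings---recalled above in the dagger case, and standard otherwise---any two such decompositions have $A_1\iso A_1'$ and $A_2\iso A_2'$ via (unitary) isomorphisms of the direct sum factors, which is the asserted uniqueness.
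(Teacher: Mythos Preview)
Your proposal is correct and follows exactly the approach the paper sketches: take (dagger) splittings of $p$ and of $q$ and verify the claimed properties. The paper's proof is a two-line ``proof idea'' that leaves all of this to the reader, so your argument is simply a careful unpacking of the same construction, with the composition bookkeeping done correctly under the paper's $\comp{f}{g}=g\circ f$ convention.
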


\begin{proof}[Proof idea]
  Let $A_1$ be a splitting of $p$, and let $A_2$ be a splitting of
  $q$. The claimed properties are easy to verify.
\end{proof}

\begin{remark}
  In \cref{lem:proj_decompose} and elsewhere, we write $A=A_1\oplus
  A_2$ instead of $A\iso A_1\oplus A_2$; this is justified because
  (dagger) biproducts are defined up to (unitary) isomorphism in the
  first place.
\end{remark}

\subsection{Trace}

A \emph{trace} on a symmetric monoidal category $\Cc$ is a family of
operations $\map{\tr{X}}{\Cc(A\oplus X,B\oplus X)}{\Cc(A,B)}$, subject
to a small number of axioms that can be found in
{\cite{JSV1996,Malherbe-Scott-Selinger,Selinger-graphical}}. The
concept of a \emph{partial trace} is defined similarly, except that
$\tr{X}$ is a partially defined operation
{\cite{Haghverdi-Scott}}. The axioms are such that a partially traced
category in which the trace happens to be totally defined is a
(totally) traced category. It was shown in
{\cite{Malherbe-thesis,Malherbe-Scott-Selinger}} that every partially
traced category can be faithfully embedded in a totally traced one,
and conversely, every monoidal subcategory of a totally traced
category is partially traced.

We will make use of the following construction, which can be found in
{\cite{Malherbe-Scott-Selinger}}. It is remarkable because it works in
any additive category.

\begin{definition}[Kernel-image trace]\label{def:kernel-image}
  Let $\map{f}{A \oplus X}{B \oplus X}$ be an arrow in an additive
  category. The \emph{kernel-image trace} $\map{\kitr{X} f}{A}{B}$ is
  defined if there exist arrows $\map{i}{A}{X}$ and $\map{k}{X}{B}$
  such that
  \[
    f\indx{A}{X} = \comp{i}{(\id{X} - f\indx{X}{X})}
    \qquad\text{and}\qquad
    \comp{(\id{X} - f\indx{X}{X})}{k} = f\indx{X}{B},
  \]
  as in the following commutative diagram
  \vspace{-2ex}
  \[
    \begin{tikzpicture}[yscale=0.9]
      \node (a) at (-1.5,1) {$A$};
      \node (b) at (.5,1) {$X$};
      \node (c) at (-.5,-1) {$X$};
      \node (d) at (1.5,-1) {$B$.};
      \draw[->] (b) -- node[fill=white]{$\id{X} - f\indx{X}{X}$} (c);
      \draw[->] (a) -- node[below left]{$f\indx{A}{X}$} (c);
      \draw[->] (b) -- node[above right]{$f\indx{X}{B}$} (d);
      \draw[->, dashed] (a) -- node[above]{$i$} (b);
      \draw[->, dashed] (c) -- node[below]{$k$} (d);
    \end{tikzpicture}
    \vspace{-2ex}
  \]
  In this case, we define
  \[\kitr{X} f = f\indx{A}{B} + \comp{i}{(\id{X} - f\indx{X}{X})}{k}.\]
  (Otherwise, the kernel-image trace is undefined.) Note $\kitr{X}$ is
  independent of the choice of each $i$ and $k$, since
  \[f\indx{A}{B} + \comp{i}{f\indx{X}{B}} = \kitr{X} f = f\indx{A}{B} + \comp{f\indx{A}{X}}{k}.\]
\end{definition}

\begin{proposition}[{\cite{Malherbe-Scott-Selinger}}]
The kernel-image trace is a partial trace.
\end{proposition}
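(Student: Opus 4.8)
The plan is to verify the five partial-trace axioms (naturality in $A$ and $B$, dinaturality/sliding in $X$, vanishing, superposing, and yanking) directly from the explicit formula $\kitr{X} f = f\indx{A}{B} + \comp{i}{(\id{X} - f\indx{X}{X})}{k}$, using the well-definedness remark at the end of \cref{def:kernel-image} (independence from the choice of $i$ and $k$) as the workhorse. Throughout, the main subtlety is bookkeeping: each axiom requires us to (i) show that the relevant composite/sum/monoidal product has the kernel-image trace \emph{defined} — i.e.\ exhibit suitable witnesses $i,k$ built from the given data — and then (ii) check the resulting formula matches the axiom. Both halves are straightforward linear algebra with $(\id{X} - f\indx{X}{X})$ playing the role of the ``resolvent,'' but one must be careful never to invert it.

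First I would handle the two easy structural axioms. For \emph{vanishing} ($\tr{0} f = f$ for $\map{f}{A\oplus 0}{B\oplus 0}$), the object $X=0$ forces $f\indx{A}{X}=0$, $f\indx{X}{B}=0$, $f\indx{X}{X}=0$, so $i=0,k=0$ witness definedness and $\kitr{0} f = f\indx{A}{B} = f$. For \emph{superposing} ($\tr{X}(g\oplus f) = g\oplus \tr{X} f$ for $\map{g}{C}{D}$), if $i,k$ witness $\kitr{X} f$, then the same $i,k$ (composed with the evident inclusions/projections so that the $C,D$ blocks contribute nothing) witness $\kitr{X}(g\oplus f)$, and the formula splits as a direct sum because all the mixed components vanish; \cref{lem:conjugate}-style block reasoning makes this routine. \emph{Naturality} in $A$ and $B$ — precomposing $f$ with $g\oplus\id{X}$ and postcomposing with $h\oplus\id{X}$ — is similar: if $i,k$ witness $\kitr{X} f$ then $\comp{g}{i}$ and $\comp{k}{h}$ witness the trace of the modified map (using that the $X$-to-$X$ block is unchanged), and the formula transforms correctly.

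The substantive axioms are \emph{yanking} and \emph{sliding}. For yanking, $\map{\sigma}{X\oplus X}{X\oplus X}$ is the symmetry, so $\sigma\indx{X}{X}$ (the ``feedback'' block, i.e.\ the first-input-to-second-output component) equals $\id{X}$ — wait, one must set up coordinates carefully: in the convention of \cref{def:kernel-image} the feedback block of the swap is $0$ on the diagonal and the off-diagonal piece is $\id{X}$, so $\id{X}-\sigma\indx{X}{X}=\id{X}$, the witnesses are $i=k=\id{X}$, and $\kitr{X}\sigma = 0 + \comp{\id{X}}{\id{X}}{\id{X}} = \id{X}$ as required. For \emph{sliding/dinaturality} — $\kitr{X}(\comp{f}{(\id{B}\oplus g)}) = \kitr{Y}(\comp{(\id{A}\oplus g)}{f})$ for $\map{g}{X}{Y}$ — I expect the real work: one computes the components of both composites, observes that the two feedback blocks are $\comp{g}{f\indx{Y}{X}}$ versus $\comp{f\indx{Y}{X}}{g}$ (conjugate idempotent-like data), and shows that a witness $i,k$ for one side produces a witness for the other via the identity $\comp{g}{(\id{Y}-\comp{f\indx{Y}{X}}{g})} = (\id{X}-\comp{g}{f\indx{Y}{X}})\comp{}{g}$ together with a geometric-series-style cancellation encoded purely in the defining equations $f\indx{A}{X}=\comp{i}{(\id{X}-f\indx{X}{X})}$ and $\comp{(\id{X}-f\indx{X}{X})}{k}=f\indx{X}{B}$. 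The main obstacle is precisely this sliding identity: making the argument \emph{purely equational} (no series, no invertibility) so that it goes through in any additive category. The trick is to push all ``division by $\id{X}-f\indx{X}{X}$'' through the witness equations — e.g.\ if $i$ witnesses the left side, take $\comp{i}{g}$ or a suitable variant as witness for the right side and verify the two defining equations hold by direct substitution — and to use the uniqueness-of-witnesses remark to conclude the two trace values agree. Once sliding is in place, the remaining naturality-in-$X$ bookkeeping and the interaction with the symmetry (exchanging two trace indices) reduce to the same kind of block computation.
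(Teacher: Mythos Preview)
The paper does not give a proof of this proposition at all; it simply cites \cite{Malherbe-Scott-Selinger} and moves on. So there is no in-paper argument to compare against, and your plan---direct verification of the partial-trace axioms from the explicit formula, using the witness-independence remark as the engine---is exactly how the cited reference does it. Your treatment of naturality, superposing, yanking, and the nullary vanishing axiom is correct, and your outline for sliding (transport witnesses along $g$ via the identity $\comp{g}{(\id{Y}-\comp{f\indx{Y}{X}}{g})} = \comp{(\id{X}-\comp{g}{f\indx{Y}{X}})}{g}$, then invoke witness-independence) is the right idea.

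There is, however, a real omission. You list ``vanishing'' as a single axiom and dispatch only the case $X=0$. The substantive vanishing axiom is the binary one,
\[
  \kitr{X\oplus Y} f \;=\; \kitr{X}\bigl(\kitr{Y} f\bigr),
\]
and for a \emph{partial} trace it carries nontrivial definedness content: one must show that the left side is defined whenever both nested traces on the right are (and, depending on the axiom system, a converse or partial converse). This is not bookkeeping. The witnesses $i,k$ for the single trace over $X\oplus Y$ are $2\times 1$ and $1\times 2$ block maps relative to the splitting $X\oplus Y$, and relating them to the pair of witnesses for the inner trace over $Y$ and then the outer trace over $X$ requires exactly the ``push division through the witness equations'' manoeuvre you sketch for sliding, iterated and with the two feedback blocks interacting. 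In the Malherbe--Scott--Selinger verification this axiom is at least as delicate as dinaturality. Your closing remark about ``exchanging two trace indices'' sounds like the symmetry/strength interaction rather than iterated tracing, so as written the proposal does not cover vanishing~II.
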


\begin{remark}\label{rem:dagger-trace}
  In a dagger category, a (partial) trace is called a \emph{dagger
    (partial) trace} if $\justtr(f^{\dag})=(\justtr f)^\dag$. In a
  dagger additive category, the kernel-image trace is always a dagger
  partial trace, because its definition is self-dual.
\end{remark}

\section{Contractions}\label{sec:contractions}

\subsection{Basic properties}

In the category of Hilbert spaces, a \emph{contraction} is a map
$\map{f}{A}{B}$ such that for all $v\in A$,
$\norm{\app{f}{v}}\leq\norm{v}$. The following definition generalizes
this concept to arbitrary dagger additive categories.

\begin{definition}[Contraction]\label{def:contraction}
  A \emph{contraction} in a dagger additive category is an arrow
  $\map{f}{A}{B}$ such that $\comp{f}{f^\dag} \leq \id{A}$.  In other
  words, such that there exists an arrow $\map{g}{A}{B'}$ with
  $\comp{f}{f^\dag} + \comp{g}{g^\dag} = \id{A}$. Note that this is
  the case if and only if the map $\map{\psmall{f\\g}}{A}{B\oplus B'}$
  is an isometry. A \emph{cocontraction} is defined dually.
\end{definition}

In particular, every isometry, coisometry, and unitary map is a
contraction. Also, biproduct projections and injections are
contractions.

Note that \cref{def:contraction} could be stated even without assuming
negatives, but most of the useful properties of contractions rely on
additivity. A point in case is the next proposition, which gives
several alternative characterizations of contractions, none of which
would be equivalent in the absence of negatives (see
counterexamples~{\ref{cex:non-cocontraction}} and
{\ref{cex:non-iso-then-coiso}}).

\begin{proposition}[Characterizations of contractions]\label{prop:contractions}
  Let $\map{f}{A}{B}$ be an arrow in a dagger additive category. The
  following are equivalent.
  \begin{enumerate}[(a)]
  \item\label{item:c1} $f$ is a component of a unitary.
  \item\label{item:c2} $f$ is a contraction.
  \item\label{item:c3} $f$ is a cocontraction.
  \item\label{item:c4} $f$ is of the form $\comp{m}{e}$ for some isometry $\map{m}{A}{X}$
    and coisometry $\map{e}{X}{B}$.
  \item\label{item:c5} $f$ is a composition of isometries and coisometries.
  \end{enumerate}
\end{proposition}

We delay the proof until we have established some lemmas. The
following lemma tells us that contractions, like isometries, form a
monoidal subcategory.

\begin{lemma}\label{lem:contraction-composition}
  Contractions are closed under composition and monoidal products.
\end{lemma}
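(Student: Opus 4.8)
The plan is to prove the two closure statements separately, using the characterization of contractions via isometries from \cref{def:contraction}, namely that $f\colon A\to B$ is a contraction iff there exists $g\colon A\to B'$ with $\comp{f}{f^\dag}+\comp{g}{g^\dag}=\id{A}$, equivalently $\psmall{f\\g}\colon A\to B\oplus B'$ is an isometry. The key point is that isometries are already known to be closed under composition and monoidal products (stated in the background section), so the strategy is to reduce each claim about contractions to the corresponding known fact about isometries by exhibiting suitable ``completing'' arrows.

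For closure under monoidal products, suppose $f\colon A\to B$ and $f'\colon A'\to B'$ are contractions, witnessed by $g\colon A\to C$ and $g'\colon A'\to C'$ with $\comp{f}{f^\dag}+\comp{g}{g^\dag}=\id{A}$ and $\comp{f'}{f'^\dag}+\comp{g'}{g'^\dag}=\id{A'}$. I would then consider $f\oplus f'$ together with the completing arrow $g\oplus g'$, and compute, using $(f\oplus f')^\dag = f^\dag\oplus f'^\dag$ together with the fact that biproduct composition distributes over $\oplus$, that $(f\oplus f')(f\oplus f')^\dag + (g\oplus g')(g\oplus g')^\dag = (\comp{f}{f^\dag}+\comp{g}{g^\dag})\oplus(\comp{f'}{f'^\dag}+\comp{g'}{g'^\dag}) = \id{A}\oplus\id{A'} = \id{A\oplus A'}$; hence $f\oplus f'$ is a contraction. (Alternatively one could invoke \cref{lem:conjugate}(b) directly: $\comp{f}{f^\dag}\leq\id A$ and $\comp{f'}{f'^\dag}\leq\id{A'}$ give $\comp{f}{f^\dag}\oplus\comp{f'}{f'^\dag}\leq\id{A}\oplus\id{A'}$, and the left side equals $(f\oplus f')(f\oplus f')^\dag$.)

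For closure under composition, suppose $f\colon A\to B$ and $h\colon B\to C$ are contractions, with witnesses $g\colon A\to B'$ giving $\comp{f}{f^\dag}+\comp{g}{g^\dag}=\id A$ and $k\colon B\to C'$ giving $\comp{h}{h^\dag}+\comp{k}{k^\dag}=\id B$. I want to show $\comp{f}{h}\colon A\to C$ is a contraction, i.e., $(\comp{f}{h})(\comp{f}{h})^\dag = \comp{f}{h}{h^\dag}{f^\dag}\leq\id A$. The cleanest route is: since $\comp{h}{h^\dag}\leq\id B$, \cref{lem:conjugate}(a) with the map $f^\dag\colon B\to A$ (so $h^\dag$ in the lemma's notation is $f^\dag$, $f^{\dag\dag}=f$) yields $\comp{f}{(\comp{h}{h^\dag})}{f^\dag}\leq \comp{f}{\id B}{f^\dag}=\comp{f}{f^\dag}$; and $\comp{f}{f^\dag}\leq\id A$ by hypothesis, so by transitivity of $\leq$ (which follows from the sum of positive maps being positive) we get $\comp{f}{h}{h^\dag}{f^\dag}\leq\id A$, as desired. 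I would note briefly why $\leq$ is transitive in this setting (if $x\leq y$ and $y\leq z$ then $y=x+a$, $z=y+b$ with $a,b$ positive, so $z=x+(a+b)$ and $a+b$ is positive).

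The main obstacle is essentially bookkeeping rather than a genuine difficulty: one must be careful that \cref{lem:conjugate}(a) is applied with the correct map and that the ``conjugation'' $x\mapsto \comp{h^\dag}{x}{h}$ matches the shape $\comp{f}{(\comp{h}{h^\dag})}{f^\dag}$ — this is why I take $h:=f^\dag$ in the lemma. A secondary small point is the (easy) transitivity of $\leq$, which is implicit in the paper but worth a one-line justification since the lemma statements only give reflexivity-like closure properties. No convergence or pseudoinverse machinery is needed here; the entire argument is a short consequence of \cref{lem:conjugate} and the definition of contraction.
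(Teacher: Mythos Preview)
Your proof is correct and essentially the same as the paper's: for composition you apply \cref{lem:conjugate}(a) with $h=f^\dag$ to get $\comp{f}{h}{h^\dag}{f^\dag}\leq\comp{f}{f^\dag}\leq\id A$, exactly as the paper does, and for monoidal products the paper uses \cref{lem:conjugate}(b) directly, which you list as your alternative (your primary explicit-witness argument is just an unpacking of the same inequality). The only extra content in your write-up is the one-line justification of transitivity of $\leq$, which the paper leaves implicit.
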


\begin{proof}
  For composition, let $\map{f}{A}{B}$ and $\map{g}{B}{C}$ be contractions. Then
  $\comp{f}{f^\dag}\leq \id{A}$ and $\comp{g}{g^\dag}\leq \id{B}$. Using
  \cref{lem:conjugate}(a), we get
  \[
  \comp{(\comp{f}{g})}{(\comp{f}{g})^\dag} = \comp{f}{g}{g^\dag}{f^\dag} \leq \comp{f}{\id{B}}{f^\dag} = \comp{f}{f^\dag} \leq \id{A}.
  \]
  Therefore, $\comp{g}{f}$ is a contraction. For monoidal products, let
  $\map{f}{A}{B}$ and $\map{g}{A'}{B'}$ be contractions. Using
  \cref{lem:conjugate}(b), we get
  \[
  \comp{(f\oplus g)}{(f\oplus g)^\dag} = (\comp{f}{f^\dag})\oplus(\comp{g}{g^\dag}) \leq \id{A}\oplus \id{A'} = \id{A \oplus A'}.
  \]
  Therefore, $f\oplus g$ is a contraction.
\end{proof}

\begin{lemma}[Contractions as components of unitaries]\label{lem:selfdual}
  In a dagger additive category, contractions
  are precisely the components of unitaries. In particular,
  contractions coincide with cocontractions.
\end{lemma}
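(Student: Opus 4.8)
The plan is to prove directly that, in a dagger additive category, the contractions are exactly the components of unitaries; the identification of contractions with cocontractions then follows by taking daggers. I would argue by two inclusions.

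For the inclusion ``contraction $\Rightarrow$ component of a unitary'', I would start from a contraction $\map{f}{A}{B}$. By \cref{def:contraction} there is some $\map{g}{A}{B'}$ such that $m=\psmall{f\\g}\colon A\to B\oplus B'$ is an isometry. Applying \cref{rem:isometry_component} to $m$ produces a unitary $\map{V}{A\oplus(B\oplus B')}{A\oplus(B\oplus B')}$ whose block with source $A$ and target $B\oplus B'$ is $m$ itself. Decomposing the biproduct $B\oplus B'$ into its summands rewrites $V$ as a $3\times 3$ block matrix indexed by $A,B,B'$, and in this presentation the block of $V$ with source $A$ and target $B$ is precisely $f$. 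Hence $f$ is a component of the unitary $V$.

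For the converse, suppose $f$ is a component of a unitary $U$. Grouping all the remaining source objects of $U$ into a single biproduct $A'$ and all the remaining target objects into $B'$, and conjugating by the appropriate permutation (which is unitary), we may assume $U=\pmat{f & h\\ k & \ell}\colon A\oplus A'\to B\oplus B'$ with $f$ in the top-left position. A unitary is in particular an isometry, so $\comp{U}{U^\dag}=\id{A\oplus A'}$; computing the block of $\comp{U}{U^\dag}$ with source and target $A$ via the matrix-multiplication formula yields $\comp{f}{f^\dag}+\comp{k}{k^\dag}=\id{A}$. Since $\comp{k}{k^\dag}$ is positive, this says exactly $\comp{f}{f^\dag}\leq\id{A}$, i.e.\ $f$ is a contraction. (Alternatively, the same computation shows the column $\psmall{f\\k}$ of $U$ is an isometry, and $f=\comp{\psmall{f\\k}}{\pi}$ for the biproduct projection $\map{\pi}{B\oplus B'}{B}$, so $f$ is a composite of contractions by \cref{lem:contraction-composition}.)

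Finally, $f$ is a component of a unitary $U$ if and only if $f^\dag$ is a component of the unitary $U^\dag$, so the equivalence just established shows that $f$ is a contraction if and only if $f^\dag$ is a contraction, which is precisely the statement that $f$ is a cocontraction. I expect the only delicate point to be the block bookkeeping in the first inclusion — checking that after decomposing $B\oplus B'$ the arrow $f$ still occupies a single block of $V$ — together with keeping the matrix-index and composition conventions consistent; everything else follows immediately from \cref{rem:isometry_component} and the definitions.
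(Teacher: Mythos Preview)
Your proposal is correct and follows essentially the same approach as the paper: both directions use \cref{def:contraction} together with \cref{rem:isometry_component} for ``contraction $\Rightarrow$ component of a unitary'', and for the converse the paper writes a component as the composite $\comp{\iota_k}{u}{\pi_j}$ of three contractions (your parenthetical alternative), while your primary argument via the block computation of $\comp{U}{U^\dag}$ is an equally valid variant. The self-duality conclusion is argued identically.
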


\begin{proof}
  First, a component of a unitary is a composition of three
  contractions $u_{jk} = \comp{\iota_k}{u}{\pi_j}$, and is therefore a
  contraction itself. Conversely, every contraction is a component of
  an isometry (as remarked in \cref{def:contraction}), which in turn
  is a component of a unitary by
  \cref{rem:isometry_component}. Finally, since being a component of a
  unitary is a self-dual concept, so is being a contraction.
\end{proof}

We can now prove \cref{prop:contractions}.

\begin{proof}[Proof of \cref{prop:contractions}]
  The equivalence $\ref{item:c1} \iff \ref{item:c2} \iff \ref{item:c3}$ is \cref{lem:selfdual}. For
  $\ref{item:c2} \implies \ref{item:c4}$, assume $\comp{f}{f^\dag}+\comp{g}{g^\dag}=\justid$.
  Then $f=\comp{m}{e}$, where $e=\pmat{\justid~~0\,}$ is a coisometry and
  $m=\psmall{f\\g}$ is an isometry. The implication $\ref{item:c4}\implies\ref{item:c5}$
  is trivial, and $\ref{item:c5}\implies \ref{item:c2}$ follows because contractions are
  closed under composition by \cref{lem:contraction-composition}.
\end{proof}

\subsection{Contractions and definiteness}

Contractions have even better properties when the underlying dagger
category satisfies the following condition.

\begin{definition}[Definite]
  A dagger category with a zero object is \emph{definite} if for all
  arrows $f$, we have that $\comp{f}{f^\dag} = 0$ implies $f = 0$.
\end{definition}

In the familiar context of Hilbert spaces, the columns or rows of a
contraction have norm at most $1$. An analogue of this principle holds
in any definite dagger additive category.

\begin{lemma}[Maxed-out column]\label{lem:one_gives_zero}
  In a definite dagger additive category, assume $f=\psmall{f_1\\f_2}$
  is a contraction. If $f_1$ is an isometry, then $f_2=0$.
\end{lemma}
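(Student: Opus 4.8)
The plan is to unpack what it means for $f=\psmall{f_1\\f_2}$ to be a contraction and combine it with the isometry hypothesis on $f_1$. By \cref{def:contraction}, $\comp{f}{f^\dag}\leq\id{A}$, and writing this out in block form, $\comp{f}{f^\dag} = \comp{f_1}{f_1^\dag} + \comp{f_2}{f_2^\dag}$. Since $f_1$ is an isometry, $\comp{f_1}{f_1^\dag} = \id{A}$, so the contraction inequality reads $\id{A} + \comp{f_2}{f_2^\dag} \leq \id{A}$. That means there is a positive map $a$ with $\id{A} = \id{A} + \comp{f_2}{f_2^\dag} + a$, hence $\comp{f_2}{f_2^\dag} + a = 0$.

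Next I would argue that a sum of positive maps being zero forces each summand to be zero. Both $\comp{f_2}{f_2^\dag}$ and $a$ are positive, so write $a = \comp{h}{h^\dag}$ for some $h$. Then $\comp{f_2}{f_2^\dag} + \comp{h}{h^\dag} = 0$ says exactly that the map $\psmall{f_2 & h}$ (a row, mapping into the appropriate biproduct) composed with its own dagger is zero; equivalently, $\comp{j}{j^\dag} = 0$ where $j = \pmat{f_2 ~ h}$. By definiteness, $j = 0$, and since $f_2$ is a component (here a row-entry) of $j$, we get $f_2 = \comp{j}{\iota} = 0$ where $\iota$ is the relevant inclusion. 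Alternatively, and more directly, once $\comp{f_2}{f_2^\dag}+a=0$ with $a$ positive, one notes $0\leq \comp{f_2}{f_2^\dag}\leq \comp{f_2}{f_2^\dag}+a=0$, so $\comp{f_2}{f_2^\dag}=0$, and then definiteness applied to $f_2$ itself yields $f_2=0$.

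The only mild subtlety — and the step I would be most careful about — is justifying that $\comp{f_2}{f_2^\dag} \leq 0$ together with positivity of $\comp{f_2}{f_2^\dag}$ implies $\comp{f_2}{f_2^\dag} = 0$; this is where one must make sure the ordering $\leq$ behaves as expected, i.e. that $0 \leq x$ and $x \leq 0$ force $x = 0$. This is immediate from the definitions: $x\leq 0$ means $0 = x + b$ for some positive $b$, and $0\leq x$ means $x$ is positive, so $x + b = 0$ is a sum of positives equal to zero. Reducing this to ``a sum of positives is zero implies each is zero'' is the heart of the matter, and that in turn reduces to definiteness via the observation above that $\comp{f_2}{f_2^\dag}+\comp{h}{h^\dag}=\comp{j}{j^\dag}=0$. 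So the whole argument is short: expand the block form, cancel $\id{A}$, and apply definiteness once.
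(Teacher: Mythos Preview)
Your proof is correct and is essentially the same argument as the paper's, just organized differently: the paper first reduces to the case where $f$ itself is an isometry (by extending $f$ to $\psmall{f_1\\f_2\\g}$), which immediately gives $\comp{f_2}{f_2^\dag}+\comp{g}{g^\dag}=0$, then applies definiteness; you instead work with the inequality and arrive at $\comp{f_2}{f_2^\dag}+\comp{h}{h^\dag}=0$ before combining into a single column $j$ --- your $h$ is exactly the paper's $g$, so the two routes coincide. One small notational slip: your $j$ should be the column $\psmall{f_2\\h}:A\to B_2\oplus C$, not a row, since you need $\comp{j}{j^\dag}=\comp{f_2}{f_2^\dag}+\comp{h}{h^\dag}$.
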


\begin{proof}
  It suffices to show the result when $f$ is an isometry, because
  every contraction is a row of an isometry. We have $\id{A} =
  \comp{f}{f^\dag} = \comp{f_1}{f_1^\dag} + \comp{f_2}{f_2^\dag} =
  \id{A} + \comp{f_2}{f_2^\dag}$. Subtracting $\id{A}$ from both
  sides, we get $0 = \comp{f_2}{f_2^\dag}$. Now by definiteness, $f_2
  = 0$.
\end{proof}

\begin{corollary}[Maxed-out row and column]\label{cor:maxed}
  In a definite dagger additive category,
  assume
  \[ f = \pmat{\justid & f_{12} \\ f_{21} & f_{22}}
  \]
  is a contraction. Then $f_{12}=0$ and $f_{21}=0$.
\end{corollary}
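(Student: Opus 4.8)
The plan is to read off the first column of $f$ and apply \cref{lem:one_gives_zero}, then obtain the remaining vanishing by dagger duality.

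First I would observe that the first column of $f$ is the composite $\comp{\iota_1}{f}\colon A_1\to B_1\oplus B_2$, where $\iota_1\colon A_1\to A_1\oplus A_2$ is the biproduct inclusion; its two components are $\justid$ and $f_{21}$, i.e.\ this column is $\psmall{\justid\\f_{21}}$. Since $\iota_1$ is an isometry (by the dagger biproduct axioms, $\comp{\iota_1}{\iota_1^\dag}=\comp{\iota_1}{\pi_1}=\justid$), it is in particular a contraction, and $f$ is a contraction by hypothesis, so \cref{lem:contraction-composition} shows that the column $\psmall{\justid\\f_{21}}$ is a contraction. Its top component is the identity, which is an isometry, so \cref{lem:one_gives_zero} applies and forces $f_{21}=0$.

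For $f_{12}=0$ I would run the same argument on $f^\dag$. By \cref{lem:selfdual}, contractions coincide with cocontractions, so $f^\dag\colon B_1\oplus B_2\to A_1\oplus A_2$ is again a contraction; its matrix is $\pmat{\justid & f_{21}^\dag\\ f_{12}^\dag & f_{22}^\dag}$, which again has the identity in the top-left corner, and we are still in the same (definite) category. Hence the previous paragraph, applied to $f^\dag$, gives $f_{12}^\dag=0$, i.e.\ $f_{12}=0$. (Alternatively one could first dualize \cref{lem:one_gives_zero} to a ``maxed-out row'' statement and feed it the first row $\pmat{\justid & f_{12}}$ of $f$, which is a contraction as $\comp{f}{\pi_1}$; passing to $f^\dag$ avoids even that.)

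I do not anticipate any genuine obstacle: the corollary is a short deduction from \cref{lem:contraction-composition}, \cref{lem:one_gives_zero}, and \cref{lem:selfdual}. The only points requiring a moment's care are the bookkeeping that identifies a column (resp.\ row) of $f$ with a precomposition (resp.\ postcomposition) by an inclusion (resp.\ projection), and the verification that those inclusions and projections are contractions — both immediate from the dagger biproduct axioms.
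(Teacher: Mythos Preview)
Your proof is correct and follows essentially the same approach as the paper: apply \cref{lem:one_gives_zero} to the first column of $f$ (a contraction, being $\comp{\iota_1}{f}$) to get $f_{21}=0$, then use duality (via \cref{lem:selfdual}) for $f_{12}=0$. The paper's one-line proof attributes the two vanishings the other way round, but the content is identical, and you have simply made explicit the step that columns of contractions are contractions.
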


\begin{proof}
  $f_{12}=0$ follows from \cref{lem:one_gives_zero} and $f_{21}=0$
  follows from its dual.
\end{proof}

The first part of the following is basically \cref{cor:maxed} in more
algebraic language. The second part amounts to the observation that
the fixed points of a contraction $f$ are also fixed by $f^\dag$.

\begin{corollary}[Fixed points of contraction]\label{lem:contract_project}
  Suppose $\map{f}{A}{A}$ is a contraction and $\map{p}{A}{A}$ is a
  dagger idempotent in a definite dagger additive category.
  \begin{enumerate}[(a)]
  \item\label{item:fpa} If $\comp{p}{f}{p} = p$, then $\comp{p}{f} = p = \comp{f}{p}$.
  \item\label{item:fpb}$\comp{p}{f} = p$ if and only if $\comp{f}{p} = p$.
  \end{enumerate}
\end{corollary}
\begin{proof}
  Without loss of generality, we can assume all dagger idempotents
  split, because otherwise we can pass to the dagger idempotent
  completion.  Let $A=A_1\oplus A_2$ be the decomposition of $A$
  obtained by splitting $p$ and its complement as in
  \cref{lem:proj_decompose}.  Write
  \vspace{-1ex}
  \[\smallblocks
  f \; = \;\;\; \begin{blockarray}{ccccccccc}
    &&&&&& \lab{A_1} & \lab{\oplus} & \lab{A_2} \\[0.25ex]
    \begin{block}{ccccc(cccc)}
      \lab{A_1}&&& &&& f_{11} && f_{12} \\[0.25ex]
      \mathclap{\lab{\oplus}}\\
      \lab{A_2}&&& &&& f_{21} && f_{22} \\
    \end{block}
  \end{blockarray}\;\;\;.
  \vspace{-0.25cm}
  \vspace{-1ex}
  \]
  To prove \ref{item:fpa}, note that $\comp{p}{f}{p}=p$ means that
  $f_{11}=1$, which by \ref{cor:maxed} implies that $f_{12}=0$ and
  $f_{21}=0$, hence $\comp{p}{f} = p = \comp{f}{p}$. Claim
  \ref{item:fpb} follows from \ref{item:fpa}.
\end{proof}

\section{Pseudoinverses}\label{sec:pseudo}

\subsection{Definition of pseudoinverse}

Every linear map $\map{f}{V}{W}$ between finite dimensional Hilbert
spaces is of the form
\vspace{-1ex}
\[
\smallblocks
f\; = \;\;\;
\begin{blockarray}{ccccccccc}
  &&&&&& \lab{(\ker f)^\perp} & \lab{\oplus} & \lab{\phantom{(}\ker f\phantom{)^\perp}} \\[0.25ex]
  \begin{block}{ccccc(cccc)}
    \lab{\im f}&&& &&& a && 0 \\[0.25ex]
    \lab{\oplus}\\
    \lab{(\im f)^\perp}&&& &&& 0 && 0 \\
  \end{block}
  \end{blockarray}\;\;\;,
\vspace{-0.25cm}
\vspace{-1ex}
\]
where $a$ is invertible. This section is about dagger additive
categories in which an analogous fact holds.
Observe that, given the above decomposition of $\map{f}{V}{W}$, we
automatically get a map $\map{\mpi{f}}{W}{V}$ in the other
direction via
\[
\smallblocks
\mpi{f}\; = \;\;\;
\begin{blockarray}{ccccccccc}
  &&&&&& \lab{\phantom{(^\perp}\im f\phantom{)^\perp}} & \lab{\oplus} & \lab{\phantom{{}^\perp}(\im f)^\perp} \\[0.25ex]
  \begin{block}{ccccc(cccc)}
    \lab{(\ker f)^\perp}&&& &&& a^{-1} && 0 \\[0.25ex]
    \lab{\oplus}\\
    \lab{\ker f}&&& &&& 0 && 0 \\
    \end{block}
\end{blockarray}\;\;\,.  \vspace{-0.25cm}
\]
We note that this ``almost inverse'' $\mpi{f}$ of $f$ satisfies the
following four properties:
\begin{equation}\label{eqn:mpi}
  f = \comp{f}{\mpi{f}}{f},\qquad
  \mpi{f} = \comp{\mpi{f}}{f}{\mpi{f}}, \qquad
  \comp{f}{\mpi{f}} = (\comp{f}{\mpi{f}})^\dag,\qquad
  \comp{\mpi{f}}{f} = (\comp{\mpi{f}}{f})^\dag.
\end{equation}
It so happens that these four laws uniquely determine $\mpi{f}$ given
$f$.

\begin{definition}[Pseudoinverse]
  In a dagger category, a \emph{pseudoinverse} (or \emph{Moore-Penrose
  pseudoinverse}) of a map $\map{f}{A}{B}$ is an arrow
  $\map{\mpi{f}}{B}{A}$ such that the equations \eqref{eqn:mpi} hold.
  A \emph{pseudoinverse dagger category} (in \cite{Cockett-Lemay},
  \emph{Moore-Penrose dagger category}) is a dagger category in which
  every arrow has a pseudoinverse.
\end{definition}

Before we prove uniqueness, here is a bit of background on
pseudoinverses. They were introduced by Moore in \cite{Moore} and
rediscovered by Penrose in \cite{Penrose}. For an overview, see
\cite{Ben-Israel} or \cite{Baksalary-Trenkler}. Pseudoinverses were
studied in abstract dagger categories by Puystjens and Robinson in
\cite{Puystjens-Robinson_factorization, Puystjens-Robinson_additive,
  Puystjens-Robinson_ep, Puystjens-Robinson_kernels,
  Puystjens-Robinson_symmetric} and recently by Cockett and Lemay in
\cite{Cockett-Lemay}.

\begin{example}\label{exa:hilb-pseudo}
  In $\hilbc$, an arrow $\map{f}{\hsp{H}}{\hsp{H}'}$ is
  pseudoinvertible if and only if the image of $f$ is closed. In
  $\fdhilbc$, every arrow is pseudoinvertible.
\end{example}

We note the following equivalent characterization of pseudoinverses;
it will simplify the proof of uniqueness in \cref{prop:mp_unique}
below.

\begin{lemma}[Second definition of pseudoinverse]\label{lem:mpi-alt}
  Pseudoinverses $f$ and $\mpi{f}$ in a dagger category are equivalently
  characterized by the equations
  \begin{equation}\label{eqn:mpi-alt}
  f = \comp{f}{f^\dag}{\mpi{f}^\dag},\qquad
  f = \comp{\mpi{f}^\dag}{f^\dag}{f},\qquad
  \mpi{f} = \comp{\mpi{f}}{\mpi{f}^\dag}{f^\dag},\qquad
  \mpi{f} = \comp{f^\dag}{\mpi{f}^\dag}{\mpi{f}}.
  \end{equation}
\end{lemma}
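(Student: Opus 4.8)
The plan is to show the two four-equation systems \eqref{eq.mpi} and \eqref{eqn:mpi-alt} are equivalent by proving each direction separately. Since the equations in \eqref{eqn:mpi-alt} are obtained from those in \eqref{eq.mpi} by taking certain daggers and substituting, the natural strategy is to derive each equation of one system from the equations of the other using only the dagger axioms and elementary manipulation, without ever invoking uniqueness. Note a pleasant symmetry: swapping $f\leftrightarrow\mpi{f}$ maps the first two equations of \eqref{eq.mpi} to each other and the last two to each other, and does the same for \eqref{eqn:mpi-alt}; so in each direction it suffices to establish ``half'' of the equations and obtain the rest by this duality.

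First I would show \eqref{eq.mpi} $\implies$ \eqref{eqn:mpi-alt}. Assume $\comp{\mpi{f}}{f}$ is self-adjoint, so $\comp{\mpi{f}}{f} = \comp{f^\dag}{\mpi{f}^\dag}$. Then
\[
\comp{\mpi{f}^\dag}{f^\dag}{f} = \comp{(\comp{f}{\mpi{f}}{f})^\dag}{}\cdots
\]
— more carefully, multiply $f = \comp{f}{\mpi{f}}{f}$ on the appropriate side and rewrite $\comp{\mpi{f}}{f}$ as $\comp{f^\dag}{\mpi{f}^\dag}$: from $f = \comp{f}{(\comp{\mpi{f}}{f})} = \comp{f}{(\comp{f^\dag}{\mpi{f}^\dag})}$ we do not quite get the second equation of \eqref{eqn:mpi-alt}, so instead I would start from $f^\dag = \comp{f^\dag}{\mpi{f}^\dag}{f^\dag}$ (the dagger of $f = \comp{f}{\mpi{f}}{f}$) and use self-adjointness of $\comp{\mpi{f}}{f} = \comp{f^\dag}{\mpi{f}^\dag}$ to replace the leading $\comp{f^\dag}{\mpi{f}^\dag}$, then take the dagger again. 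Concretely: $f^\dag = \comp{f^\dag}{\mpi{f}^\dag}{f^\dag} = \comp{\mpi{f}}{f}{f^\dag}$, and taking daggers gives $f = \comp{f}{f^\dag}{\mpi{f}^\dag}$, which is the first equation of \eqref{eqn:mpi-alt}. Symmetrically, self-adjointness of $\comp{f}{\mpi{f}}$ together with $f = \comp{f}{\mpi{f}}{f}$ yields $f = \comp{\mpi{f}^\dag}{f^\dag}{f}$. The two equations for $\mpi{f}$ in \eqref{eqn:mpi-alt} follow by the $f\leftrightarrow\mpi{f}$ symmetry.

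For the converse \eqref{eqn:mpi-alt} $\implies$ \eqref{eq.mpi}, I would first recover $f = \comp{f}{\mpi{f}}{f}$: combine $f = \comp{f}{f^\dag}{\mpi{f}^\dag}$ with the dagger of $f = \comp{\mpi{f}^\dag}{f^\dag}{f}$, namely $f^\dag = \comp{f^\dag}{f}{\mpi{f}}$, substituting the latter's right-hand side for the $\comp{f^\dag}{}$ appearing inside — i.e.\ $f = \comp{f}{(\comp{f^\dag}{\mpi{f}^\dag})}$ and then recognizing $\comp{f^\dag}{\mpi{f}^\dag} = (\comp{\mpi{f}}{f})^\dag$; alternatively feed $f^\dag = \comp{f^\dag}{f}{\mpi{f}}$ into $f = \comp{\mpi{f}^\dag}{f^\dag}{f}$ to get $f = \comp{\mpi{f}^\dag}{f^\dag}{f}{\mpi{f}}{f}$ and reduce. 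The cleanest route is probably: from $f = \comp{f}{f^\dag}{\mpi{f}^\dag}$, take daggers to get $f^\dag = \comp{\mpi{f}}{f}{f^\dag}$; substitute this into $f = \comp{\mpi{f}^\dag}{f^\dag}{f} = \comp{\mpi{f}^\dag}{(\comp{\mpi{f}}{f}{f^\dag})}{f}$, hmm — I expect to need both equations together. Then for self-adjointness of $\comp{\mpi{f}}{f}$: from $f = \comp{f}{f^\dag}{\mpi{f}^\dag}$ we get $\comp{\mpi{f}}{f} = \comp{\mpi{f}}{f}{f^\dag}{\mpi{f}^\dag} = \comp{(\comp{\mpi{f}}{f}{f^\dag})}{\mpi{f}^\dag}$, and the parenthesized factor is $f^\dag$ by the dagger of the same equation, giving $\comp{\mpi{f}}{f} = \comp{f^\dag}{\mpi{f}^\dag} = (\comp{\mpi{f}}{f})^\dag$; symmetrically for $\comp{f}{\mpi{f}}$ using $f = \comp{\mpi{f}^\dag}{f^\dag}{f}$. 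The equation $\mpi{f} = \comp{\mpi{f}}{f}{\mpi{f}}$ and the remaining self-adjointness then come for free from the $f\leftrightarrow\mpi{f}$ symmetry of \eqref{eqn:mpi-alt}.

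The main obstacle is purely bookkeeping: choosing, for each of the eight target equations, the right pair of hypotheses and the right place to substitute so that a self-adjointness rewrite ($\comp{\mpi{f}}{f}=\comp{f^\dag}{\mpi{f}^\dag}$ or $\comp{f}{\mpi{f}}=\comp{\mpi{f}^\dag}{f^\dag}$) collapses a triple product. There is no conceptual difficulty and no appeal to uniqueness, categorical structure beyond the dagger, or additivity is needed; exploiting the $f\leftrightarrow\mpi{f}$ symmetry roughly halves the work.
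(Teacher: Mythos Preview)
Your approach is correct and essentially the same as the paper's. The key computation you identify for the converse direction --- deriving self-adjointness of $\comp{\mpi{f}}{f}$ from the first equation of \eqref{eqn:mpi-alt} via $\comp{\mpi{f}}{f} = \comp{\mpi{f}}{f}{f^\dag}{\mpi{f}^\dag} = \comp{f^\dag}{\mpi{f}^\dag}$, using the equation and then its dagger --- is exactly what the paper does. The paper is simply much terser: it displays only this self-adjointness derivation (and its dual for $\comp{f}{\mpi{f}}$) and then handles everything else in one sentence, observing that once $\comp{\mpi{f}}{f} = \comp{f^\dag}{\mpi{f}^\dag}$ and $\comp{f}{\mpi{f}} = \comp{\mpi{f}^\dag}{f^\dag}$ hold, the two systems of equations interconvert by ``sliding $f$ and $\mpi{f}$ past each other, picking up daggers.'' Your explicit treatment of both directions and your use of the $f\leftrightarrow\mpi{f}$ symmetry are fine; you have just unpacked what the paper compresses.
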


\begin{proof}
  From {\eqref{eqn:mpi-alt}}, we derive
  \[
  \comp{\mpi{f}}{f} =\comp{\mpi{f}}{f}{f^\dag}{\mpi{f}^\dag} =
  \comp{f^\dag}{\mpi{f}^\dag}\quad\text{and}\quad \comp{f}{\mpi{f}} =
  \comp{f}{\mpi{f}}{\mpi{f}^\dag}{f^\dag} =
  \comp{\mpi{f}^\dag}{f^\dag},
  \]
  i.e.,
  $\comp{\mpi{f}}{f} = (\comp{\mpi{f}}{f})^\dag$ and
  $\comp{f}{\mpi{f}} = (\comp{f}{\mpi{f}})^\dag$.
  Hence the two definitions are equivalent as $f$ and $\mpi{f}$ are
  permitted to slide past each other, picking up daggers.
\end{proof}

\begin{proposition}[Uniqueness of pseudoinverse]\label{prop:mp_unique}
  If $\mpi{f}$ and $\altmpi{f}$ are both pseudoinverses of
  $f$, then $\mpi{f} = \altmpi{f}$.
\end{proposition}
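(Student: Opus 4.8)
The plan is to give the standard algebraic argument for uniqueness of the Moore--Penrose pseudoinverse, adapted to the dagger-categorical setting, using the second characterization from \cref{lem:mpi-alt} to keep the manipulations short. Suppose $\mpi{f}$ and $\altmpi{f}$ both satisfy the four equations \eqref{eq.mpi}. The key observation is that the two one-sided projections $\comp{f}{\mpi{f}}$ and $\comp{\mpi{f}}{f}$ are determined by $f$ alone, and likewise for $\altmpi{f}$; once we know $\comp{f}{\mpi{f}} = \comp{f}{\altmpi{f}}$ and $\comp{\mpi{f}}{f} = \comp{\altmpi{f}}{f}$, the usual ``insert idempotents'' trick finishes the job.

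First I would show $\comp{f}{\mpi{f}} = \comp{f}{\altmpi{f}}$. Both are self-adjoint by \eqref{eq.mpi}, so it suffices to show one equals the composite of the other with itself in a suitable way; concretely, using $f = \comp{f}{\mpi{f}}{f}$ and self-adjointness,
\[
  \comp{f}{\altmpi{f}}
  = \comp{(\comp{f}{\mpi{f}}{f})}{\altmpi{f}}
  = \comp{f}{\mpi{f}}{(\comp{f}{\altmpi{f}})}
  = (\comp{f}{\mpi{f}})^\dag{}\comp{}{}(\comp{f}{\altmpi{f}})^\dag
  = \comp{(\comp{f}{\altmpi{f}}{f})}{\mpi{f}})^\dag
  = (\comp{f}{\mpi{f}})^\dag = \comp{f}{\mpi{f}},
\]
where I used $\comp{f}{\altmpi{f}}{f} = f$ in the penultimate step. (I will clean up the bracketing when writing this out; the point is just: expand one $f$, push the dagger through both self-adjoint factors, recombine using the other idempotent equation, push the dagger back.) A symmetric computation starting from $\mpi{f} = \comp{\mpi{f}}{f}{\mpi{f}}$ and $\altmpi{f} = \comp{\altmpi{f}}{f}{\altmpi{f}}$, together with self-adjointness of $\comp{\mpi{f}}{f}$ and $\comp{\altmpi{f}}{f}$, gives $\comp{\mpi{f}}{f} = \comp{\altmpi{f}}{f}$.

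With both one-sided projections agreeing, I conclude
\[
  \mpi{f} = \comp{\mpi{f}}{f}{\mpi{f}} = \comp{(\comp{\mpi{f}}{f})}{\mpi{f}} = \comp{(\comp{\altmpi{f}}{f})}{\mpi{f}} = \comp{\altmpi{f}}{(\comp{f}{\mpi{f}})} = \comp{\altmpi{f}}{(\comp{f}{\altmpi{f}})} = \comp{\altmpi{f}}{f}{\altmpi{f}} = \altmpi{f}.
\]
There is no real obstacle here: this is a finite diagram chase in a one-object-worth of hom-sets, and the only thing to be careful about is the order of composition under the paper's $\circ$-convention and getting the daggers to land on the self-adjoint factors at the right moments. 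If the first two computations turn out awkward in the $\comp{}{}$ notation, I would instead invoke \cref{lem:mpi-alt}: from \eqref{eqn:mpi-alt} one reads off $\comp{\mpi{f}}{f} = \comp{f^\dag}{\mpi{f}^\dag}$ and $\comp{f}{\mpi{f}} = \comp{\mpi{f}^\dag}{f^\dag}$ directly (as shown in its proof), which makes the ``slide past each other'' manipulations purely formal and removes any ambiguity.
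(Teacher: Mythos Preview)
Your argument is correct and is essentially the same approach as the paper's: a direct algebraic manipulation of the pseudoinverse identities to show both candidates equal a common expression. The paper's version is a bit more compact, using \cref{lem:mpi-alt} to go straight to $\mpi{f} = \comp{\mpi{f}}{f}{\altmpi{f}} = \altmpi{f}$ in one line each, without separately isolating the equalities $\comp{f}{\mpi{f}} = \comp{f}{\altmpi{f}}$ and $\comp{\mpi{f}}{f} = \comp{\altmpi{f}}{f}$ as you do.
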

\begin{proof}
  $\mpi{f} = \comp{\mpi{f}}{\mpi{f}^\dag}{f^\dag} =
    \comp{\mpi{f}}{\mpi{f}^\dag}{f^\dag}{f}{\altmpi{f}} = \comp{\mpi{f}}{f}{\altmpi{f}}$.
  Symmetrically, $\altmpi{f} = \comp{\mpi{f}}{f}{\altmpi{f}}$.
\end{proof}

Note that the notion of pseudoinverse is self-dual and therefore
respected by dagger: if $\map{f}{A}{B}$ is pseudoinvertible, then so
is $f^\dag$ with $\mpi{(f^\dag)} = (\mpi{f})^\dag$. Also note that if
$f$ is pseudoinvertible, then $\comp{\mpi{f}}{f}$ and
$\comp{f}{\mpi{f}}$ are dagger idempotents. More specifically,
$\comp{\mpi{f}}{f}$ represents projection onto the image of $f$, and
$\comp{f}{\mpi{f}}$ represents projection onto the coimage of $f$
(i.e., the orthogonal complement of the kernel). We hence obtain the
following decomposition, which is analogous to what happens in $\fdhilbc$.

\begin{proposition}[Generalized singular value decomposition {\cite{Cockett-Lemay}}]\label{prop:svd}
  Let $\map{f}{A}{B}$ be an arrow in a dagger additive category in
  which all dagger idempotents split. Then $f$ is pseudoinvertible if
  and only if we can write $A= A_1\oplus A_2$ and $B=
  B_1\oplus B_2$ such that
  \vspace{-0.5ex}
  \[\smallblocks
  f \; = \;\;\; \begin{blockarray}{ccccccccc}
    &&&&&& \lab{A_1} & \lab{\oplus} & \lab{A_2}\hpad \\[0.25ex]
    \begin{block}{ccccc(cccc)}
      \lab{B_1}&&& &&& a && 0\vpad\hpad \\[0.25ex]
      \mathclap{\lab{\oplus}}\\
      \lab{B_2}&&& &&& 0 && 0\hpad \\
    \end{block}
  \end{blockarray}\;\;\;
  \qquad\mbox{and}\qquad
  \mpi{f} \; = \;\;\; \begin{blockarray}{ccccccccc}
    &&&&&& \lab{B_1} & \lab{\oplus} & \lab{B_2}\hpad \\[0.25ex]
    \begin{block}{ccccc(cccc)}
      \lab{A_1}&&& &&& a\inv && 0\vpad\hpad \\[0.25ex]
      \mathclap{\lab{\oplus}}\\
      \lab{A_2}&&& &&& 0 && 0\hpad \\
    \end{block}
  \end{blockarray}\;\;\;,
  \vspace{-0.45cm}
  \]
  where $\map{a}{A_1}{B_1}$ is invertible.
  Moreover, the factorization of $f$ is unique up to unitary
  isomorphisms of the direct sum factors.
\end{proposition}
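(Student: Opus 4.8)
The plan is to prove both directions. The ``if'' direction is the easy one: given the block forms for $f$ and $\mpi{f}$ with $a$ invertible, one simply checks the four defining equations \eqref{eq.mpi} by block matrix multiplication. Each product $\comp{f}{\mpi{f}}{f}$, $\comp{\mpi{f}}{f}{\mpi{f}}$, $\comp{f}{\mpi{f}}$, $\comp{\mpi{f}}{f}$ is a $2\times 2$ block computation that collapses immediately using $\comp{a^{-1}}{a}=\justid$ and $\comp{a}{a^{-1}}=\justid$; for instance $\comp{f}{\mpi{f}} = \psmall{\justid&0\\0&0}$, which is visibly self-adjoint, and $\comp{f}{\mpi{f}}{f} = f$. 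This also establishes that $\mpi{f}$ as displayed really is \emph{the} pseudoinverse, by uniqueness (\cref{prop:mp_unique}).

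For the ``only if'' direction, suppose $f$ is pseudoinvertible. As noted in the paragraph preceding the proposition, $p = \comp{\mpi{f}}{f}$ and $p' = \comp{f}{\mpi{f}}$ are dagger idempotents (using the self-adjointness equations together with $\comp{f}{\mpi{f}}{f}=f$ and $\comp{\mpi{f}}{f}{\mpi{f}}=\mpi{f}$ to get $p^2 = p$ and $(p')^2 = p'$). First I would pass to the dagger idempotent completion if necessary — but the hypothesis already assumes all dagger idempotents split, so I can split $p$ and its complement $1-p$ to get $A = A_1\oplus A_2$, and split $p'$ and $1-p'$ to get $B = B_1\oplus B_2$, invoking \cref{lem:proj_decompose} so that $p = \psmall{\justid&0\\0&0}$ on $A$ and $p' = \psmall{\justid&0\\0&0}$ on $B$. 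Now I would write $f$ and $\mpi{f}$ as $2\times 2$ block matrices in these decompositions. The equations $p' f = f$ and $f p = f$ (which follow from $\comp{f}{\mpi{f}}{f}=f$) force the off-diagonal and $(2,2)$ blocks of $f$ to vanish, leaving $f = \psmall{a&0\\0&0}$ with $\map{a}{A_1}{B_1}$; similarly $\mpi{f} p' = \mpi{f}$ and $p\,\mpi{f} = \mpi{f}$ force $\mpi{f} = \psmall{b&0\\0&0}$ with $\map{b}{B_1}{A_1}$. Then $\comp{f}{\mpi{f}} = \psmall{ab&0\\0&0}$ must equal $p' = \psmall{\justid&0\\0&0}$, so $\comp{a}{b} = \justid_{A_1}$, wait — $ab$ here means $\comp{b}{a}$ in the paper's convention, giving $\comp{b}{a} = \justid$ — and symmetrically $\comp{a}{b} = \justid$, wait I need to be careful: $\comp{\mpi{f}}{f} = p$ gives the other composite equal to $\justid_{B_1}$. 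So $a$ is invertible with $a\inv = b$, and we are done.

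The main obstacle, such as it is, is purely bookkeeping: correctly tracking the paper's composition convention $\comp{f}{g} = g\circ f$ through the block-matrix multiplications, and making sure the four pseudoinverse equations are deployed in the right order so that the vanishing of the spurious blocks of $f$ and $\mpi{f}$ is actually \emph{forced} rather than merely consistent. The uniqueness-up-to-unitary claim follows from the uniqueness clause of \cref{lem:proj_decompose}: the decompositions $A = A_1\oplus A_2$ and $B = B_1\oplus B_2$ are determined by the dagger idempotents $\comp{\mpi{f}}{f}$ and $\comp{f}{\mpi{f}}$, which are themselves uniquely determined by $f$ (since $\mpi{f}$ is, by \cref{prop:mp_unique}), hence determined up to unitary isomorphism of the factors; and $a$ is then the corresponding component of $f$, so it too is determined up to conjugation by those unitaries.
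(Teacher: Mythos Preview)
Your approach is essentially identical to the paper's: split the dagger idempotents $\comp{f}{\mpi{f}}$ and $\comp{\mpi{f}}{f}$ via \cref{lem:proj_decompose} and read off the block form of $f$ and $\mpi{f}$, with invertibility of $a$ coming from the two composites equalling the identity projections. The only slip is that you swapped which idempotent decomposes which object---in the paper's convention $\comp{f}{\mpi{f}}\colon A\to A$ and $\comp{\mpi{f}}{f}\colon B\to B$, so it is your $p'$ that splits $A$ and your $p$ that splits $B$---precisely the bookkeeping hazard you anticipated.
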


\begin{proof}
  Clearly, if $f$ can be written in the stated form, then $f$ is
  pseudoinvertible with pseudoinverse as stated. For the left-to-right
  implication, assume $f$ is pseudoinvertible. Consider the dagger
  idempotents $\map{\comp{f}{\mpi{f}}}{A}{A}$ and
  $\map{\comp{\mpi{f}}{f}}{B}{B}$. By splitting them and their
  complements as in \cref{lem:proj_decompose}, we can write
  $A=A_1\oplus A_2$ and $B=B_1\oplus B_2$, where $\comp{f}{\mpi{f}} =
  \comp{\pi_1^A}{\iota_1^A}$ and $\comp{\mpi{f}}{f} =
  \comp{\pi_1^B}{\iota_1^B}$. Let
  $a=\map{\comp{\iota_1^A}{f}{\pi_1^B}}{A_1}{B_1}$. Then $f =
  \comp{f}{\mpi{f}}{f}{\mpi{f}}{f} =
  \comp{\pi_1^A}{\iota_1^A}{f}{\pi_1^B}{\iota_1^B} =
  \comp{\pi_1^A}{a}{\iota_1^B}$, hence $f$ is of the claimed
  form. Moreover, it is easy to verify that $a\inv =
  \comp{\iota_1^B}{\mpi{f}}{\pi_1^A}$. Uniqueness is as in
  Lemma~\ref{lem:proj_decompose}.
\end{proof}

\subsection{EP maps}

The generalized singular value decomposition of \cref{prop:svd}
is especially nice if $f$ is a so-called EP-map, which we now
define. This definition captures the notion of an endomorphism whose
kernel and image are orthogonal complements.

\begin{definition}[EP maps]
  An \emph{EP map} (or \emph{range hermitian map}) in a dagger
  category is a pseudoinvertible endomorphism $\map{f}{A}{A}$ such
  that $\comp{f}{\mpi{f}} = \comp{\mpi{f}}{f}$.
\end{definition}

The term ``EP'' was introduced by
Schwerdtfeger~\cite{Schwerdtfeger1950}, who does not explain what
these letters stand for. Given that $\comp{f}{\mpi{f}}$ and
$\comp{\mpi{f}}{f}$ are projections that are equal to each other, a
useful mnemonic is that EP stands for ``equal projections''.

\begin{remark}[Normal operators are EP]
  If $f$ is pseudoinvertible and $\comp{f}{f^\dag} =
  \comp{f^\dag}{f}$, then $f$ is EP:
  \[
  \comp{f}{\mpi{f}}
  = \comp{\mpi{f}^\dag}{f^\dag}\\
  = \comp{\mpi{f}^\dag}{\mpi{f}}{f}{f^\dag}\\
  = \comp{\mpi{(\comp{f}{f^\dag})}}{f}{f^\dag}\\
  = \comp{\mpi{(\comp{f^\dag}{f})}}{f^\dag}{f}\\
  = \comp{\mpi{f}}{\mpi{f}^\dag}{f^\dag}{f}\\
  = \comp{\mpi{f}}{f}.
  \]
\end{remark}

The following proposition characterizes EP maps in the style of
\cref{prop:svd}.

\begin{proposition}\label{prop:ep-svd}
  Let $\map{f}{A}{A}$ be an arrow in a dagger additive category in which
  all dagger idempotents split. Then $f$ is EP if and only if we can
  write $A= A_1\oplus A_2$ such that
  \vspace{-1ex}
  \[\smallblocks
  f \; = \;\;\; \begin{blockarray}{ccccccccc}
    &&&&&& \lab{A_1} & \lab{\oplus} & \lab{A_2}\hpad \\[0.25ex]
    \begin{block}{ccccc(cccc)}
      \lab{A_1}&&& &&& a && 0\vpad\hpad \\[0.25ex]
      \mathclap{\lab{\oplus}}\\
      \lab{A_2}&&& &&& 0 && 0\hpad \\
    \end{block}
  \end{blockarray}\;\;\;,
  \vspace{-0.25cm}
  \vspace{-2ex}
  \]
  where $\map{a}{A_1}{A_1}$ is invertible.
\end{proposition}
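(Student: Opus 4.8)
The plan is to deduce this from the generalized singular value decomposition, \cref{prop:svd}, by showing that the EP condition forces the two direct-sum decompositions of the domain and codomain to coincide. First I would handle the easy direction: if $f$ can be written in the stated block-diagonal form, then the pseudoinverse computed in \cref{prop:svd} is $\mpi{f} = \psmall{a\inv & 0\\0&0}$ with respect to the \emph{same} decomposition $A = A_1\oplus A_2$, so both $\comp{f}{\mpi{f}}$ and $\comp{\mpi{f}}{f}$ equal $\psmall{\justid&0\\0&0}$, hence they are equal and $f$ is EP.

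For the forward direction, assume $f$ is EP. Apply \cref{prop:svd} to get $A = A_1\oplus A_2$ (from splitting $\comp{f}{\mpi{f}}$ and its complement) and $A = A_1'\oplus A_2'$ (from splitting $\comp{\mpi{f}}{f}$ and its complement), with $f = \comp{\pi_1}{a}{\iota_1'}$ for an invertible $\map{a}{A_1}{A_1'}$. The EP hypothesis $\comp{f}{\mpi{f}} = \comp{\mpi{f}}{f}$ says exactly that the idempotent $p = \comp{f}{\mpi{f}}$ (projecting onto the coimage, i.e.\ $A_1$) equals the idempotent $q = \comp{\mpi{f}}{f}$ (projecting onto the image, i.e.\ $A_1'$). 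Since dagger splittings are unique up to unitary isomorphism, the two splittings of this single idempotent $p = q$ are canonically identified: there is a unitary $\map{u}{A_1'}{A_1}$ compatible with the inclusions, and likewise on the complements. Conjugating the block form of $f$ by this unitary (which amounts to choosing $A_1' = A_1$ and $A_2' = A_2$), we obtain $f = \psmall{a&0\\0&0}$ with respect to a single decomposition $A = A_1\oplus A_2$, where $a$ is still invertible.

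I expect the main obstacle to be handling the bookkeeping for the two splittings cleanly — in particular, making precise that ``$\comp{f}{\mpi{f}} = \comp{\mpi{f}}{f}$ implies the two decompositions can be taken to be the same'' without getting lost in a thicket of projection and inclusion maps. The cleanest route is probably to invoke \cref{lem:proj_decompose}: since $\comp{f}{\mpi{f}}$ and $\comp{\mpi{f}}{f}$ are the \emph{same} idempotent $p$, the objects $A_1, A_2$ produced by splitting $p$ and its complement serve simultaneously as $A_1, A_2$ \emph{and} as $A_1', A_2'$, and then the computation $f = \comp{f}{\mpi{f}}{f}{\mpi{f}}{f} = \comp{\pi_1}{\iota_1}{f}{\pi_1}{\iota_1} = \comp{\pi_1}{a}{\iota_1}$ with $a = \comp{\iota_1}{f}{\pi_1}$ goes through verbatim, and invertibility of $a$ with $a\inv = \comp{\iota_1}{\mpi{f}}{\pi_1}$ is checked just as in the proof of \cref{prop:svd}. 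The uniqueness-up-to-unitary statement then comes for free from the uniqueness in \cref{lem:proj_decompose}, so it need not be stated separately here.
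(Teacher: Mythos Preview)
Your proposal is correct and follows essentially the same approach as the paper: reduce to \cref{prop:svd} and observe that the EP hypothesis $\comp{f}{\mpi{f}} = \comp{\mpi{f}}{f}$ means the two idempotents being split are identical, so a single decomposition $A = A_1 \oplus A_2$ serves for both domain and codomain. The paper's proof is just the one-line remark ``like the proof of \cref{prop:svd}, but using the fact that the idempotents $\comp{\mpi{f}}{f}$ and $\comp{f}{\mpi{f}}$ are equal and therefore have the same splitting,'' which your plan unpacks in appropriate detail.
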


\begin{proof}
  Like the proof of \cref{prop:svd}, but using the
  fact that the idempotents $\comp{\mpi{f}}{f}$ and $\comp{f}{\mpi{f}}$ are
  equal and therefore have the same splitting.
\end{proof}

Before we say more about EP maps, we need the following lemma.

\begin{lemma}\label{lem:zero}
  In a dagger category with a zero object, if $\map{f}{A}{B}$ is
  pseudoinvertible and $\comp{f}{f^\dag} = 0$, then $f = 0$. In
  particular, every pseudoinverse dagger category with a zero object
  is definite.
\end{lemma}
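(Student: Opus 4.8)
The plan is to exploit the ``second definition'' of pseudoinverse from \cref{lem:mpi-alt}, which re-expresses the defining equations \eqref{eqn:mpi-alt} in a form where $\comp{f}{f^\dag}$ appears as an explicit left factor. Concretely, the first equation of \eqref{eqn:mpi-alt} reads $f = \comp{f}{f^\dag}{\mpi{f}^\dag}$. So, assuming $f$ is pseudoinvertible and $\comp{f}{f^\dag} = 0$, I would simply substitute to get
\[
f = \comp{f}{f^\dag}{\mpi{f}^\dag} = \comp{0}{\mpi{f}^\dag} = 0.
\]
That is the entire argument for the first claim; no calculation beyond this substitution is needed, and the only ``insight'' involved is noticing that one of the four equivalent pseudoinverse laws is already set up to make $\comp{f}{f^\dag}$ cancellable on the left.

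For the second claim, suppose the category is a pseudoinverse dagger category with a zero object, and let $f$ be any arrow with $\comp{f}{f^\dag} = 0$. Since every arrow is pseudoinvertible by hypothesis, $f$ in particular is, so the first part applies and gives $f = 0$. By \cref{def:contraction}'s preceding \textbf{Definite} condition, this is exactly what it means for the category to be definite.

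I do not anticipate any real obstacle here: the result is immediate once \cref{lem:mpi-alt} is available, which is why that lemma was proved first. The only thing to be careful about is to cite the \emph{first} of the four equations in \eqref{eqn:mpi-alt} (the one of the form $f = \comp{f}{f^\dag}{\mpi{f}^\dag}$) rather than the original defining equations \eqref{eq.mpi}, since in the latter form $\comp{f}{f^\dag}$ does not sit as a leftmost factor and the cancellation is not immediate.
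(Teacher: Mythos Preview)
Your proposal is correct and matches the paper's proof essentially verbatim: the paper also invokes the first equation of \eqref{eqn:mpi-alt} from \cref{lem:mpi-alt} to write $f = \comp{f}{f^\dag}{\mpi{f}^\dag} = 0$, and the definiteness claim follows immediately.
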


\begin{proof}
  Using {\eqref{eqn:mpi-alt}} from \cref{lem:mpi-alt}, we have $f =
  \comp{f}{f^\dag}{\mpi{f}^\dag} = 0$.
\end{proof}

We saw in \cref{lem:contract_project} that the fixed points of a
contraction $f$ are also fixed by $f^\dag$. The following lemma is the
same fact in different language: $g=1-f$ being EP means that
$1-\comp{g}{\mpi{g}}$ (the projection onto the fixed points of $f$) is
equal to $1-\comp{\mpi{g}}{g}$ (the projection onto the fixed points of
$f^\dag$).

\begin{lemma}[Contractions and EP
  maps]\label{lem:contraction_complementary}
  Let $\map{f}{A}{A}$ be a contraction in a pseudoinverse dagger
  additive category. Then $g = \id{A}-f$ is EP.
\end{lemma}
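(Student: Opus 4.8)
The plan is to use the characterization of EP maps from \cref{prop:ep-svd}, so I would first pass to the dagger idempotent completion (allowed since pseudoinverses transport there, and the completion of a pseudoinverse dagger additive category is again one, hence definite by \cref{lem:zero}). Thus I may assume all dagger idempotents split. By \cref{prop:ep-svd} it suffices to produce a decomposition $A = A_1 \oplus A_2$ in which $g$ becomes $\psmall{a & 0 \\ 0 & 0}$ with $a$ invertible; equivalently, I must show that the dagger idempotents $\comp{g}{\mpi{g}}$ and $\comp{\mpi{g}}{g}$ coincide, i.e. that $g$ is EP.

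The key observation, flagged in the remark preceding the statement, is that the complementary idempotent $p = \id{A} - \comp{\mpi{g}}{g}$ projects onto the ``kernel of $g$'', which is the fixed-point set of $f$, while $q = \id{A} - \comp{g}{\mpi{g}}$ projects onto the ``kernel of $g^\dag$'', the fixed-point set of $f^\dag$. So I would first verify that $p$ really is a fixed-point projection for $f$: from $\comp{g}{\mpi{g}}{g} = g$ and $g = \id{A} - f$ one gets $\comp{g}{p} = 0$, i.e. $\comp{(\id{A} - f)}{p} = 0$, i.e. $\comp{f}{p} = p$. Since $p$ is a dagger idempotent and $f$ is a contraction, \cref{lem:contract_project}(b) gives $\comp{p}{f} = p$ as well, hence $\comp{p}{(\id{A} - f)} = 0$, i.e. $\comp{p}{g} = 0$. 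Dually, starting from $\comp{\mpi{g}}{g}{\mpi{g}} = \mpi{g}$ one shows $\comp{g^\dag}{q} = q$; applying \cref{lem:contract_project}(b) to the contraction $f^\dag$ and the dagger idempotent $q$ yields $\comp{q}{f^\dag} = q$, hence $\comp{q}{g^\dag} = 0$ and so $\comp{g}{q} = 0$ after taking daggers.

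Now I have $\comp{p}{g} = 0 = \comp{g}{p}$ and $\comp{q}{g} = 0 = \comp{g}{q}$, so both $p$ and $q$ are complements of the \emph{same} idempotent relative to $g$ — more precisely, I would argue that $q$ is the (unique) complement of $\comp{\mpi{g}}{g}$ and $p$ is the complement of $\comp{g}{\mpi{g}}$, and that in fact $p = q$. To see $p = q$: compute $\comp{q}{p}$ and use that $\comp{\mpi{g}}{g}$ kills $p$ on one side while $\comp{g}{\mpi{g}}$ kills $q$; concretely, $p = \comp{p}{(\comp{\mpi{g}}{g} + p)} \cdot(\text{...})$ — the cleanest route is to note $\comp{g}{\mpi{g}}$ and $\comp{\mpi{g}}{g}$ have the same complement because $p$ annihilates $g$ from the left and $q$ annihilates $g$ from the right, then invoke uniqueness of complementary idempotents (stated in the excerpt, valid even without negatives). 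Once $\comp{\mpi{g}}{g} = \comp{g}{\mpi{g}}$, $g$ is EP by definition.

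The main obstacle I anticipate is the bookkeeping needed to turn ``$p$ annihilates $g$ on both sides'' and ``$q$ annihilates $g$ on both sides'' into $p = q$ rigorously without circular reasoning: one must be careful that $p$ really equals $\id{A} - \comp{g}{\mpi{g}}$ and not merely some fixed-point projection, since a priori there could be several idempotents $r$ with $\comp{r}{g} = 0 = \comp{g}{r}$. The resolution is that $\comp{\mpi{g}}{g}$ is a \emph{specific} dagger idempotent with $\comp{g}{(\id{A} - \comp{\mpi{g}}{g})} = \comp{g}{\mpi{g}}{g} \cdot(\ldots)$ — wait, more carefully: $\comp{(\id{A} - \comp{\mpi{g}}{g})}{\mpi{g}^\dag}{g^\dag} = 0$ needs the mixed pseudoinverse law, so $\id{A} - \comp{\mpi{g}}{g}$ annihilates $g^\dag$ from the left, equivalently annihilates $g$ from the right, and then \cref{lem:contract_project} upgrades this to annihilation from the left; the same works for $\id{A} - \comp{g}{\mpi{g}}$. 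So both $\id{A} - \comp{\mpi{g}}{g}$ and $\id{A} - \comp{g}{\mpi{g}}$ are idempotents that annihilate $g$ on both sides, and the complementary idempotents $\comp{\mpi{g}}{g}$ and $\comp{g}{\mpi{g}}$ must therefore agree by uniqueness of the complement (the complement being pinned down as the unique idempotent orthogonal to a given idempotent, provided such exists). This gives $g$ EP and completes the proof.
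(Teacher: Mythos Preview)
Your core idea is exactly the paper's: use \cref{lem:contract_project} to upgrade the one-sided annihilation coming from the pseudoinverse axioms to two-sided annihilation. The paper does this more economically and purely equationally, without passing to the idempotent completion or invoking \cref{prop:ep-svd}: from $\comp{(\id{A}-\comp{g}{\mpi{g}})}{(\id{A}-g)} = \id{A}-\comp{g}{\mpi{g}}$ and \cref{lem:contract_project} it gets $\comp{(\id{A}-g)}{(\id{A}-\comp{g}{\mpi{g}})} = \id{A}-\comp{g}{\mpi{g}}$, which unpacks to $\comp{g}{g}{\mpi{g}}=g$; similarly $\comp{\mpi{g}}{g}{g}=g$; and then $\comp{g}{\mpi{g}} = \comp{\mpi{g}}{g}{g}{\mpi{g}} = \comp{\mpi{g}}{g}$.

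Your final step, however, has a genuine gap. You conclude that since both $p=\id{A}-\comp{\mpi{g}}{g}$ and $q=\id{A}-\comp{g}{\mpi{g}}$ annihilate $g$ on both sides, their complements $\comp{\mpi{g}}{g}$ and $\comp{g}{\mpi{g}}$ must agree ``by uniqueness of the complement''. That is not what uniqueness of complements says: it says two complements \emph{of the same idempotent} coincide, not that two idempotents annihilating $g$ have equal complements (indeed $0$ also annihilates $g$ on both sides). The fix is immediate from what you already have: your equation $\comp{p}{g}=0$ with $p=\id{A}-\comp{\mpi{g}}{g}$ \emph{is} the equation $\comp{\mpi{g}}{g}{g}=g$, and your $\comp{g}{q}=0$ is $\comp{g}{g}{\mpi{g}}=g$; now finish as above. (Minor slip: in your dual computation you wrote $\comp{g^\dag}{q}=q$; you mean $\comp{f^\dag}{q}=q$, since $\comp{g^\dag}{q}=0$.) The detour through the idempotent completion is harmless but unnecessary.
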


\begin{proof}
  Observe that
  $\comp{(\id{A}-\comp{g}{\mpi{g}})}{(\id{A}-g)} =
  \id{A}-\comp{g}{\mpi{g}}$. By \cref{lem:zero}, the category is
  definite, and thus we can apply \cref{lem:contract_project} to obtain
  $\comp{(\id{A}-g)}{(\id{A}-\comp{g}{\mpi{g}})} =
  \id{A}-\comp{g}{\mpi{g}}$. Simplifying, we get
  $\comp{g}{g}{\mpi{g}}=g$. Similarly, $\comp{\mpi{g}}{g}{g}=g$, hence
  $\comp{g}{\mpi{g}} =
  \comp{\mpi{g}}{g}{g}{\mpi{g}}=\comp{\mpi{g}}{g}$, as claimed.
\end{proof}

\section{Proof of the main result}
\label{sec:main}

The purpose of this section is to prove \cref{thm:main}. That is, in a
pseudoinverse dagger additive category, the monoidal subcategories of
unitaries, isometries, coisometries, and contractions are traced. The
proof in the case of isometries proceeds in two steps: In
\cref{lem:step1}, we show that the kernel-image trace of a contraction
(and therefore, of an isometry) is always defined. This is the only
part of the proof that uses pseudoinverses. In \cref{lem:step2}, we
show that the kernel-image trace of an isometry is again an
isometry. These two facts imply that the kernel-image trace is totally
defined on the category of isometries. Since it is already known to be
a partial trace, these facts are sufficient to prove that the category
of isometries is totally traced. The case of contractions is proved
similarly, and the other cases are easy consequences.

\begin{lemma}[Trace is defined for contractions]
  \label{lem:kerim_total}
  \label{lem:step1}
  In a pseudoinverse dagger additive category,
  the kernel-image trace is always defined for contractions.
\end{lemma}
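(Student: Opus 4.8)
The plan is to produce, for an arbitrary contraction $\map{f}{A\oplus X}{B\oplus X}$, the two factorizations required by \cref{def:kernel-image}, namely arrows $\map{i}{A}{X}$ and $\map{k}{X}{B}$ with $f\indx{A}{X} = \comp{i}{(\id{X} - f\indx{X}{X})}$ and $\comp{(\id{X} - f\indx{X}{X})}{k} = f\indx{X}{B}$. The key observation is that $g = \id{X} - f\indx{X}{X}$ is exactly the kind of map controlled by \cref{lem:contraction_complementary}: since $f$ is a contraction, its diagonal component $f\indx{X}{X}$ is also a contraction (it is a component of $f$, hence of a unitary, by \cref{lem:selfdual}), so $g = \id{X} - f\indx{X}{X}$ is an EP map, and in particular pseudoinvertible. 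The natural candidates are then $i = \comp{f\indx{A}{X}}{\mpi{g}}$ and $k = \comp{\mpi{g}}{f\indx{X}{B}}$, and the whole lemma reduces to checking that $\comp{f\indx{A}{X}}{\mpi{g}}{g} = f\indx{A}{X}$ and $\comp{g}{\mpi{g}}{f\indx{X}{B}} = f\indx{X}{B}$.

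To verify these, I would work with the dagger idempotents $\comp{\mpi{g}}{g}$ and $\comp{g}{\mpi{g}}$; because $g$ is EP these two coincide, call the common value $q$, with $\comp{g}{q} = \comp{q}{g} = g$. So $\comp{f\indx{A}{X}}{\mpi{g}}{g} = f\indx{A}{X}$ amounts to $\comp{f\indx{A}{X}}{q} = f\indx{A}{X}$, i.e.\ that the row $\pmat{f\indx{A}{X}~f\indx{X}{X}}$ of $f$ factors through the image projection $q$ of $g$; dually $\comp{q}{f\indx{X}{B}} = f\indx{X}{B}$ says the column of $f$ does too. The cleanest route is to pass to the dagger idempotent completion (so $q$ splits) and use \cref{prop:ep-svd} to write $X = X_1 \oplus X_2$ with $g = a \oplus 0$ for invertible $a$; then $f\indx{X}{X} = \id{X} - g = (\id{X_1}-a)\oplus \id{X_2}$, so $f$ restricted to the relevant block has the form $\pmat{* & \id{X_2}}$ in its $X_2$-row-and-column, and \cref{cor:maxed} (maxed-out row and column), applicable since the ambient category is definite by \cref{lem:zero}, forces the off-diagonal entries touching $X_2$ to vanish — which is precisely $\comp{f\indx{A}{X}}{q} = f\indx{A}{X}$ and $\comp{q}{f\indx{X}{B}} = f\indx{X}{B}$ after re-expanding.

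The main obstacle is the bookkeeping in the last step: one must be careful that $f\indx{X}{X}$ and $g$ are being decomposed with respect to \emph{the same} splitting $X = X_1\oplus X_2$, and that the "maxed-out" argument is applied to the correct $2\times 2$ (or $3\times 3$, once $A$ and $B$ are included) sub-block of $f$, reading off $f\indx{X_2}{B} = 0$, $f\indx{A}{X_2} = 0$, and $f\indx{X_2}{X_1} = 0 = f\indx{X_1}{X_2}$. Once these vanishings are in hand, $\comp{i}{g} = f\indx{A}{X}$ and $\comp{g}{k} = f\indx{X}{B}$ follow immediately, the kernel-image trace is defined, and (as the surrounding text notes) this is the only place pseudoinverses are used. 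An alternative, more computational proof avoids \cref{prop:ep-svd} entirely: starting from $\comp{f}{f^\dag}\leq \id{}$ one can try to show $\comp{f\indx{A}{X}}{(\id{X}-\comp{\mpi{g}}{g})} = 0$ directly by a positivity estimate, but routing through EP maps and \cref{cor:maxed} is shorter and reuses machinery already built in \cref{sec:contractions,sec:pseudo}.
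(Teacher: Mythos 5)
Your proposal is correct and follows essentially the same route as the paper's proof: show $\id{X}-f\indx{X}{X}$ is EP via \cref{lem:contraction_complementary}, pass to the dagger idempotent completion, decompose via \cref{prop:ep-svd}, and use \cref{cor:maxed} to kill the entries $f\indx{A}{X_2}$ and $f\indx{X_2}{B}$. The only cosmetic difference is that you phrase $i$ and $k$ directly through $\mpi{(\id{X}-f\indx{X}{X})}$, while the paper writes the equivalent maps $\psmall{\comp{f\indx{A}{X_1}}{a\inv}\\0}$ and $\pmat{\comp{a\inv}{f\indx{X_1}{B}} & 0}$ in block form.
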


\begin{proof}
  Let $\map{f}{A \oplus X}{B \oplus X}$ be a contraction. Then
  $f\indx{X}{X}$ is also a contraction, because it can be written as a
  composition of three contractions $f\indx{X}{X} =
  \justxmap{X}{\justxmap{A\oplus X}{\justxmap{B\oplus
        X}{X}{\pi_X}}{f}}{\iota_X}$. Then by
  \cref{lem:contraction_complementary}, $\id{X}-f\indx{X}{X}$ is an EP
  map. For the rest of this proof, we assume that all idempotents
  split; this is without loss of generality because we can pass to the
  dagger idempotent completion. Note that the dagger idempotent
  completion still has pseudoinverses {\cite{Cockett-Lemay}}. Because
  $\id{X}-f\indx{X}{X}$ is an EP map, by \cref{prop:ep-svd}, we can
  write
  \[\smallblocks
    \id{X}-f\indx{X}{X} = \;\;\; \begin{blockarray}{ccccccccc}
      &&&&&& \lab{X_1} & \lab{\oplus} & \lab{X_2}\hpad \\[0.25ex]
      \begin{block}{ccccc(cccc)}
        \lab{X_1}&&& &&& a && 0\vpad\hpad \\[0.25ex]
        \mathclap{\lab{\oplus}}\\
        \lab{X_2}&&& &&& 0 && 0\hpad \\
      \end{block}
    \end{blockarray}\;\;\;,
    \vspace{-0.25cm}
  \]
  where we have decomposed $X$ into a sum of two objects $X_1\oplus
  X_2$ and $\map{a}{X_1}{X_1}$ is invertible.
  Writing $f$ in matrix form, we now have
  \[
  \smallblocks
  f
  \qquad = \quad\begin{blockarray}{ccccccccccc}
    &&&&&& \lab{A} & \lab{\oplus} & \lab{X_1} & \lab{\oplus} & \lab{X_2}  \\[0.25ex]
  \begin{block}{ccccc(cccccc)}
    \lab{B}&&& &&& f\indx{A}{B} && f\indx{X_1}{B} && f\indx{X_2}{B}\vpad \\[0.25ex]
    \lab{\oplus}\\
    \lab{X_1}&&& &&& f\indx{A}{X_1} &&  \id{X_1} - a && 0 \\[0.25ex]        
    \lab{\oplus}\\                                                                                    
    \lab{X_2}&&& &&& f\indx{A}{X_2} && 0 && \,\id{X_2}\, \\
  \end{block}
  \end{blockarray}
  ~~~.
  \]
  Using \cref{cor:maxed}, we get $f\indx{A}{X_2}=0$ and
  $f\indx{X_2}{B}=0$. To show that the kernel-image trace of $f$ is
  defined, we must show that there exist $i$ and $k$ to complete the
  following diagram:
  \vspace{-1ex}
  \[
  \begin{tikzpicture}[yscale=0.9]
    \node (a) at (-1.5,1) {$A$};
    \node (b) at (.5,1) {$X$};
    \node (c) at (-.5,-1) {$X$};
    \node (d) at (1.5,-1) {$B$};
    \draw[->] (b) -- node[fill=white]{$\psmall{a & 0 \\ 0 & 0}$} (c);
    \draw[->] (a) -- node[left]{$\psmall{f\indx{A}{X_1}\\0}$} (c);
    \draw[->] (b) -- node[right]{$\psmall{f\indx{X_1}{B}&0}$} (d);
    \draw[->, dashed] (a) -- node[above]{$i$} (b);
    \draw[->, dashed] (c) -- node[below]{$k$} (d);
  \end{tikzpicture}
  \vspace{-2ex}
  \]
  But this can be achieved with
  $i=\pmat{\comp{f\indx{A}{X_1}}{a\inv}\\0}$ and
  $k=\pmat{\comp{a\inv}{f\indx{X_1}{B}}&0}$.
\end{proof}

In the next lemma, we do not assume pseudoinverses, so the
kernel-image trace of a given isometry may not exist. However, we show
that if it does exist, it is an isometry.

\begin{lemma}[Trace of isometry]
  \label{lem:kerim_isom}
  \label{lem:step2}
  In a dagger additive category, the
  kernel-image trace of an isometry, if it exists, is an isometry.
\end{lemma}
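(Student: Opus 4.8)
Let $\map{f}{A\oplus X}{B\oplus X}$ be an isometry whose kernel-image trace exists, so there are arrows $\map{i}{A}{X}$ and $\map{k}{X}{B}$ with $f\indx{A}{X}=\comp{i}{(\id{X}-f\indx{X}{X})}$ and $\comp{(\id{X}-f\indx{X}{X})}{k}=f\indx{X}{B}$, and $\kitr{X}f=f\indx{A}{B}+\comp{i}{(\id{X}-f\indx{X}{X})}{k}=f\indx{A}{B}+\comp{i}{f\indx{X}{B}}$. The plan is to compute $\comp{(\kitr{X}f)}{(\kitr{X}f)^\dag}$ directly from the matrix entries of $f$ and show it equals $\id{A}$, using only the isometry equations for $f$, namely $\comp{f}{f^\dag}=\id{A\oplus X}$, which unpack into the three relations
\[
\comp{f\indx{A}{B}}{f\indx{A}{B}^\dag}+\comp{f\indx{X}{B}}{f\indx{X}{B}^\dag}=\id{B},\quad
\comp{f\indx{A}{X}}{f\indx{A}{X}^\dag}+\comp{f\indx{X}{X}}{f\indx{X}{X}^\dag}=\id{X},\quad
\comp{f\indx{A}{B}}{f\indx{A}{X}^\dag}+\comp{f\indx{X}{B}}{f\indx{X}{X}^\dag}=0,
\]
together with their daggers and the analogous block-column relations coming from the first column of $f$ being an isometry $\map{\psmall{f\indx{A}{B}\\f\indx{A}{X}}}{A}{B\oplus X}$, i.e. $\comp{f\indx{A}{B}^\dag}{f\indx{A}{B}}+\comp{f\indx{A}{X}^\dag}{f\indx{A}{X}}=\id{A}$.

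The first step is to exploit \cref{lem:contract_project}: since $f$ is an isometry, hence a contraction, $f\indx{X}{X}$ is a contraction, and the defining equation $f\indx{A}{X}=\comp{i}{(\id{X}-f\indx{X}{X})}$ lets us relate the relevant idempotents. Concretely, set $g=\id{X}-f\indx{X}{X}$; I would first show that $\comp{i}{g}=f\indx{A}{X}$ forces certain fixed-point identities, and similarly that $\comp{g}{k}=f\indx{X}{B}$ does. The key auxiliary facts I expect to need are of the form $\comp{f\indx{A}{X}}{k}=\comp{i}{f\indx{X}{B}}$ (already noted in \cref{def:kernel-image}), and that $i$ and $k$ interact with $f\indx{X}{X}$ in a controlled way — for instance, that $\comp{i}{(\id{X}-f\indx{X}{X})}{f\indx{X}{X}^\dag}$ can be rewritten using the isometry relations. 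Expanding $\comp{(\kitr{X}f)}{(\kitr{X}f)^\dag}$ gives
\[
\comp{f\indx{A}{B}}{f\indx{A}{B}^\dag}
+\comp{f\indx{A}{B}}{f\indx{X}{B}^\dag}{i^\dag}
+\comp{i}{f\indx{X}{B}}{f\indx{A}{B}^\dag}
+\comp{i}{f\indx{X}{B}}{f\indx{X}{B}^\dag}{i^\dag}.
\]
Using the off-diagonal isometry relation to replace $\comp{f\indx{A}{B}}{f\indx{X}{B}^\dag}$ (rewriting via $f\indx{X}{B}=\comp{g}{k}$ and the identity $\comp{f\indx{A}{B}}{f\indx{A}{X}^\dag}=-\comp{f\indx{X}{B}}{f\indx{X}{X}^\dag}$) and similarly for its dagger, and replacing $\comp{f\indx{X}{B}}{f\indx{X}{B}^\dag}$ via the first relation, I expect the cross terms and the quadratic term to collapse, with $i=\comp{f\indx{A}{X}}{\mpi{g}}$-type reasoning converting everything back to $\comp{f\indx{A}{B}}{f\indx{A}{B}^\dag}+\comp{f\indx{A}{X}}{f\indx{A}{X}^\dag}$, which is $\id{A}$ by the column-isometry relation.

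The main obstacle is the bookkeeping: $i$ and $k$ are not canonical (only $\comp{i}{g}$ and $\comp{g}{k}$ are pinned down), so every manipulation must keep $i$ adjacent to a factor of $g=\id{X}-f\indx{X}{X}$, or else use the two equivalent forms of $\kitr{X}f$ to avoid an exposed $i$. The cleanest route is probably to first prove a small sublemma — that for an isometry $f$, one has $\comp{i}{g}{g^\dag}=\comp{i}{g}=f\indx{A}{X}$ does \emph{not} hold in general, so instead I would lean on \cref{lem:contraction_complementary}-style reasoning applied to $f\indx{X}{X}$ only in the idempotent-complete setting, or better, mimic the argument of \cref{lem:step1} by passing to the generalized SVD of $g$ so that $g$ becomes $\psmall{a&0\\0&0}$ with $a$ invertible, forcing $f\indx{A}{X_2}=0$ and $f\indx{X_2}{B}=0$ by \cref{cor:maxed}, and then $i$ may be taken to be $\psmall{\comp{f\indx{A}{X_1}}{a\inv}\\0}$ with all entries explicit. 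In that normal form the isometry relations for $f$ become a finite set of block equations among $f\indx{A}{B},f\indx{A}{X_1},f\indx{X_1}{B},a$, and the identity $\comp{(\kitr{X}f)}{(\kitr{X}f)^\dag}=\id{A}$ reduces to a short direct computation, which I expect to be the least error-prone way to finish; one then notes the argument is self-dual, so the coisometry case (and hence unitaries) follows, and the contraction case is handled by the same SVD normal form with "$\leq\id{}$" replacing "$=\id{}$" throughout.
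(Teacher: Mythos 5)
Your opening is on the right track and matches the intended strategy: expand $\comp{\kitr{X}f}{(\kitr{X}f)^\dag}$ with $\kitr{X}f=f\indx{A}{B}+\comp{i}{f\indx{X}{B}}$ and reduce it to $\comp{f\indx{A}{B}}{f\indx{A}{B}^\dag}+\comp{f\indx{A}{X}}{f\indx{A}{X}^\dag}=\id{A}$. But the reduction is never actually performed (``I expect the cross terms and the quadratic term to collapse''), and the relations you propose to feed into it are mis-stated. The hypothesis is $\comp{f}{f^\dag}=\id{A\oplus X}$, whose components are exactly the three equations used in the paper; the three relations you display (with right-hand sides $\id{B}$, $\id{X}$, $0$) are instead the components of $\comp{f^\dag}{f}=\id{B\oplus X}$, i.e.\ the coisometry (row) relations, which an isometry need not satisfy (and in the paper's composition convention they are not even well-typed). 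Likewise the identity you invoke for the cross terms, $\comp{f\indx{A}{B}}{f\indx{A}{X}^\dag}=-\comp{f\indx{X}{B}}{f\indx{X}{X}^\dag}$, is a row relation; the one actually available is $\comp{f\indx{A}{B}}{f\indx{X}{B}^\dag}=-\comp{f\indx{A}{X}}{f\indx{X}{X}^\dag}$. The missing idea is the purely algebraic combination that the paper carries out: substitute that off-diagonal relation into both cross terms and the $(X,X)$ relation into the quadratic term, then use $f\indx{A}{X}=\comp{i}{(\id{X}-f\indx{X}{X})}$ to assemble everything into $\comp{i}{(\id{X}-f\indx{X}{X})}{(\id{X}-f\indx{X}{X}^\dag)}{i^\dag}=\comp{f\indx{A}{X}}{f\indx{A}{X}^\dag}$, after which the $(A,A)$ relation finishes the proof. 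This uses only $i$ (not $k$, not $\mpi{g}$), a feature the paper later exploits in \cref{lem:kerim_contr}.

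The fallback you declare ``cleanest'' --- passing to the generalized SVD of $g=\id{X}-f\indx{X}{X}$ as in \cref{lem:step1} --- would fail for the lemma as stated. This lemma is deliberately set in an arbitrary dagger additive category without pseudoinverses; \cref{lem:contraction_complementary} and \cref{prop:ep-svd} require $g$ to be pseudoinvertible (and dagger idempotents to split), and existence of the kernel-image trace does not provide this: in $\mat{\Z}$ the unitary $\psmall{1 & 0 \\ 0 & -1}$ has a kernel-image trace along the second summand although $\id{}-(-1)=2$ has no pseudoinverse. The same objection applies to your ``$i=\comp{f\indx{A}{X}}{\mpi{g}}$-type reasoning.'' An SVD-based argument could be salvaged inside a pseudoinverse dagger additive category (enough for the main theorem, suitably reorganized), but it proves a strictly weaker statement than the lemma and would also undercut the way \cref{lem:kerim_contr} reuses this proof; so as written the proposal has a genuine gap.
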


\begin{proof}
  Consider an arrow $\map{f}{A \oplus X}{B \oplus X}$ with components
  \[
  \smallblocks
  f = \;\;\;\begin{blockarray}{ccccccccc}
      &&&&&& \lab{A} & \lab{\oplus} & \lab{X} \\[0.25ex]
      \begin{block}{ccccc(cccc)}
        \lab{B}&&& &&& f\indx{A}{B} && f\indx{X}{B} \\[0.25ex]
        \lab{\oplus}\\
        \lab{X}&&& &&& f\indx{A}{X} && f\indx{X}{X} \\
      \end{block}
    \end{blockarray}
  \vspace{-0.25cm}
  \]
  Assume that $f$ is an isometry, so that we have
  \begin{align}
    \begin{split}\label{eqn:isom}
      \comp{f\indx{X}{B}}{f\indx{X}{B}^\dag} + \comp{f\indx{X}{X}}{f\indx{X}{X}^\dag} &= \id{X},\\
      \comp{f\indx{A}{B}}{f\indx{A}{B}^\dag} + \comp{f\indx{A}{X}}{f\indx{A}{X}^\dag} &= \id{A},\\
      \comp{f\indx{A}{B}}{f\indx{X}{B}^\dag} + \comp{f\indx{A}{X}}{f\indx{X}{X}^\dag} &= 0.
    \end{split}
  \end{align}
  Also assume that $\kitr{X}f$ exists, so in particular there exists
  $\map{i}{A}{X}$ satisfying
  \begin{equation}\label{eqn:im}
    f\indx{A}{X} = \comp{i}{(\id{X} - f\indx{X}{X})}.
  \end{equation}
  \par\pagebreak
  \noindent To show that $\kitr{X}f$ is an isometry, we calculate
  \vspace{-5ex}\par
  \[
  \renewcommand{\binarycomp}[2]{#2\!\circ\!#1}
  \scalebox{0.99}{\begin{minipage}{\textwidth}
  \begin{align*}
    \comp{\kitr{X}f}{(\kitr{X}f)^\dag}
    &= \comp{(f\indx{A}{B} + \comp{i}{f\indx{X}{B}})}{(f\indx{A}{B}^\dag + \comp{f\indx{X}{B}^\dag}{i^\dag})}\\
    &= \comp{f\indx{A}{B}}{f\indx{A}{B}^\dag}
      + \comp{i}{f\indx{X}{B}}{f\indx{A}{B}^\dag}
      + \comp{f\indx{A}{B}}{f\indx{X}{B}^\dag}{i^\dag}
      + \comp{i}{f\indx{X}{B}}{f\indx{X}{B}^\dag}{i^\dag}\\
    \text{(by \eqref{eqn:isom})}\quad
    &= \comp{f\indx{A}{B}}{f\indx{A}{B}^\dag}
      - \comp{i}{f\indx{X}{X}}{f\indx{A}{X}^\dag}
      - \comp{f\indx{A}{X}}{f\indx{X}{X}^\dag}{i^\dag}
      + \comp{i}{(\id{X} - \comp{f\indx{X}{X}}{f\indx{X}{X}^\dag})}{i^\dag}\\
    \text{(by \eqref{eqn:im})}\quad
    &= \comp{f\indx{A}{B}}{f\indx{A}{B}^\dag}
      - \comp{i}{f\indx{X}{X}}{(\id{X}-f\indx{X}{X}^\dag)}{i^\dag}
      - \comp{i}{(\id{X}-f\indx{X}{X})}{f\indx{X}{X}^\dag}{i^\dag} 
      + \comp{i}{(\id{X} - \comp{f\indx{X}{X}}{f\indx{X}{X}^\dag})}{i^\dag}\\
    &= \comp{f\indx{A}{B}}{f\indx{A}{B}^\dag}
      + \comp{i}{(-f\indx{X}{X}+\comp{f\indx{X}{X}}{f\indx{X}{X}^\dag})}{i^\dag}
      + \comp{i}{(-f\indx{X}{X}^\dag+\comp{f\indx{X}{X}}{f\indx{X}{X}^\dag})}{i^\dag} 
      + \comp{i}{(\id{X} - \comp{f\indx{X}{X}}{f\indx{X}{X}^\dag})}{i^\dag}\\
    &= \comp{f\indx{A}{B}}{f\indx{A}{B}^\dag}
      + \comp{i}{(\id{X}
      - f\indx{X}{X}^\dag
      - f\indx{X}{X}
      + \comp{f\indx{X}{X}}{f\indx{X}{X}^\dag})}{i^\dag}\\
    &= \comp{f\indx{A}{B}}{f\indx{A}{B}^\dag}
      + \comp{i}{(\id{X} - f\indx{X}{X})}{(\id{X} - f\indx{X}{X}^\dag)}{i^\dag}\\
    \text{(by \eqref{eqn:im})}\quad
    &= \comp{f\indx{A}{B}}{f\indx{A}{B}^\dag}
      + \comp{f\indx{A}{X}}{f\indx{A}{X}^\dag}\\
    \text{(by \eqref{eqn:isom})}\quad
    &= \id{A}.
  \end{align*}
\end{minipage}}
\]
\par
  \vspace{-6ex}
  \qedhere
\end{proof}

Note that the proof only used the $i$ of the kernel-image trace and
not the $k$. We use this fact to immediately obtain the following.

\pagebreak[2]

\begin{lemma}[Trace of contraction]\label{lem:kerim_contr}
  In a dagger additive category, the
  kernel-image trace of a contraction, if it exists, is a contraction.
\end{lemma}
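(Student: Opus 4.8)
The plan is to deduce this from \cref{lem:kerim_isom} by embedding the contraction into an isometry in a way that leaves the $X\to X$ corner untouched. Let $\map{f}{A\oplus X}{B\oplus X}$ be a contraction and assume $\kitr{X}f$ exists, witnessed by $\map{i}{A}{X}$ with $f\indx{A}{X}=\comp{i}{(\id{X}-f\indx{X}{X})}$, so that $\kitr{X}f=f\indx{A}{B}+\comp{i}{f\indx{X}{B}}$; the other witness $k$ will not be needed. By \cref{def:contraction}, since $f$ is a contraction there is an arrow $\map{g}{A\oplus X}{C}$ with $\psmall{f\\g}\colon A\oplus X\to (B\oplus X)\oplus C$ an isometry. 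Composing with the canonical unitary reorganization of the codomain $(B\oplus X)\oplus C\iso (B\oplus C)\oplus X$, we obtain an isometry $\map{F}{A\oplus X}{(B\oplus C)\oplus X}$ whose blocks are $F\indx{A}{B\oplus C}=\psmall{f\indx{A}{B}\\g\indx{A}{C}}$, $F\indx{X}{B\oplus C}=\psmall{f\indx{X}{B}\\g\indx{X}{C}}$, $F\indx{A}{X}=f\indx{A}{X}$, and $F\indx{X}{X}=f\indx{X}{X}$.

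Now the crucial observation is that $F$ has the same $A\to X$ and $X\to X$ components as $f$, so the very same $i$ satisfies $F\indx{A}{X}=\comp{i}{(\id{X}-F\indx{X}{X})}$. The next step is to invoke the remark made immediately after \cref{lem:kerim_isom}: the computation in its proof uses only the isometry identities for the map in question together with the single identity involving $i$, and never uses $k$ or the existence of the kernel-image trace. Applying that computation verbatim to $F$ (with $B$ there replaced by $B\oplus C$), we conclude that $T\coloneqq F\indx{A}{B\oplus C}+\comp{i}{F\indx{X}{B\oplus C}}$ satisfies $\comp{T}{T^\dag}=\id{A}$, i.e.\ $T\colon A\to B\oplus C$ is an isometry. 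This is the one spot where we really need the ``$i$-only'' form of the argument, because for our choice of $g$ there is no reason for a matching $k$ to exist, so $\kitr{X}F$ itself need not be defined.

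Finally, computing the blocks of $T$ gives $T=\psmall{f\indx{A}{B}+\comp{i}{f\indx{X}{B}}\\ g\indx{A}{C}+\comp{i}{g\indx{X}{C}}}=\psmall{\kitr{X}f\\h}$ with $\map{h=g\indx{A}{C}+\comp{i}{g\indx{X}{C}}}{A}{C}$. Since $\psmall{\kitr{X}f\\h}$ is an isometry, $\comp{(\kitr{X}f)}{(\kitr{X}f)^\dag}+\comp{h}{h^\dag}=\id{A}$, hence $\comp{(\kitr{X}f)}{(\kitr{X}f)^\dag}\le\id{A}$, i.e.\ $\kitr{X}f$ is a contraction. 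I expect no serious difficulty: the mathematical content is already in \cref{lem:kerim_isom,def:contraction}, and the only points needing care are the index bookkeeping in the codomain reorganization (so that the blocks of $F$ line up with those of $f$ as stated) and the explicit appeal to the fact that the \cref{lem:kerim_isom} calculation never touches $k$.
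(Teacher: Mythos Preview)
Your proof is correct and follows essentially the same approach as the paper: embed the contraction as a block of an isometry with the same $X$-components, then invoke the observation that the calculation in \cref{lem:kerim_isom} uses only $i$ (not $k$) to conclude that $\kitr{X}f$ is a column of an isometry. The paper's proof differs only in notation (it writes $B'$ for your $C$ and $h$ for your $F$) and in being slightly less explicit about the codomain reorganization.
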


\begin{proof}
  Suppose $\map{f}{A \oplus X}{B \oplus X}$ is a contraction. By the
  definition of contraction, there exists an object $B'$ and an
  arrow $\map{g}{A\oplus X}{B'}$ such that $\comp{f}{f^\dag} + \comp{g}{g^\dag}
  = \id{A\oplus X}$, or in other words, such that
  \[
  \smallblocks
  h = \;\;\;\begin{blockarray}{ccccccccc}
      &&&&&& \lab{A} & \lab{\oplus} & \lab{X} \\[0.25ex]
      \begin{block}{ccccc(cccc)}
        \lab{B'}&&& &&& g\indx{A}{B'} && g\indx{X}{B'} \\[0.25ex]
        \lab{\oplus}\\
        \lab{B}&&& &&& f\indx{A}{B} && f\indx{X}{B} \\
        \lab{\oplus}\\
        \lab{X}&&& &&& f\indx{A}{X} && f\indx{X}{X} \\
      \end{block}
    \end{blockarray}
    \vspace{-0.25cm}
  \]
  is an isometry.  Now assume that the kernel-image trace of $f$
  exists. While this does not necessarily imply that the kernel-image
  trace of $h$ exists, we nevertheless get the existence of $\map{i}{A}{X}$
  such that $f\indx{A}{X} = \comp{i}{(\id{X} - f\indx{X}{X})}$. As seen in the
  proof of \cref{lem:step2}, this is sufficient to show that
  \[
    \pmat{g\indx{A}{B'}\\f\indx{A}{B}} + \comp{i}{\pmat{g\indx{X}{B'}\\f\indx{X}{B}}} 
  =
  \pmat{g\indx{A}{B'} + \comp{i}{g\indx{X}{B'}} \\f\indx{A}{B} + \comp{i}{f\indx{X}{B}}} 
  \]
  is an isometry. Thus, the kernel-image trace of
  $f$, which is $f\indx{A}{B} + \comp{i}{f\indx{X}{B}}$, is a contraction, as
  claimed.
\end{proof}

We are now ready to prove our main theorem.

\begin{proof}[Proof of \cref{thm:main}]
  Lemmas~\ref{lem:kerim_total}, \ref{lem:kerim_isom}, and
  \ref{lem:kerim_contr} show that the kernel-image trace of the
  ambient category is total on isometries and contractions. Dually,
  the same holds for coisometries. Hence the trace is also total on
  their intersection, the unitaries. Moreover, in the cases of
  unitaries and contractions, the trace is a dagger trace by
  \cref{rem:dagger-trace}.
\end{proof}

\section*{Conclusion and future work}

We showed that in every pseudoinverse dagger additive category, each
of the subcategories of isometries, coisometries, unitary maps, and
contractions forms a totally traced category. This generalizes a
result by Bartha in the case of finite dimensional Hilbert spaces. One
of the main ingredients of this construction is the notion of
pseudoinverse, which was originally studied for matrices by Moore and
Penrose, but makes sense in any dagger category. Contractions can also
be defined in any dagger category (as compositions of isometries and
coisometries), but they only behave as expected if one assumes
additive structure and definiteness. The latter follows from the
existence of pseudoinverses.

One might ask whether Bartha's trace has a physical interpretation. We
do not know the answer, but some potential evidence to the contrary is
that the trace on contractions is not a continuous operation, and that
it does not exist in infinite dimensional spaces. See
\cref{app:physical} for more details.

In future work, it would be interesting to investigate whether the
assumptions under which contractions are traced could be further
reduced.
In fact, there are examples of dagger additive categories in which the
contractions are totally traced but not all pseudoinverses exist; see
\cref{cex:trace-without-pseudo}. On the other hand, it is not
sufficient to merely assume, say, the existence of dagger kernels; see
\cref{rem:no-infinite-trace}.

\bibliographystyle{eptcs}
\bibliography{references}

\newpage
\appendix

\section{The pseudotrace is not a trace}\label{app:pseudotrace}

In the proof of our main theorem, pseudoinverses
play a minor, but crucial role: they are only used to prove that the
kernel-image trace is total. In Bartha's original work
{\cite{Bartha_trace}}, pseudoinverses play a larger part, because he
uses them directly to define the trace on the category of finite
dimensional Hilbert spaces and isometries via the following formula:
\begin{equation}\label{eqn:pseudotrace}
  \mptr{X} f = f\indx{A}{B} + \comp{f\indx{A}{X}}{\mpi{(\id{X} - f\indx{X}{X})}}{f\indx{X}{B}}.
\end{equation}
In fact, the above formula is defined for all linear maps $f$ (not
necessarily isometries), and, as we show below, it agrees with the
kernel-image trace whenever the latter exists. However, Bartha's
operation is not a trace on the category of all linear maps, because
it fails to satisfy the trace axioms. We call it the
\emph{pseudotrace}.

\begin{definition}[Pseudotrace]\label{def:pseudotrace}
  In a dagger additive category, the
  \emph{pseudotrace} of $\map{f}{A \oplus X}{B \oplus X}$ is defined
  by {\eqref{eqn:pseudotrace}}, if the pseudoinverse of $\id{X} - f\indx{X}{X}$
  exists, and undefined otherwise. In particular, if the category has
  pseudoinverses, this is a totally defined operation.
\end{definition}
  
\begin{warning}
  In general, the pseudotrace is not a (partial or total) trace. In
  $\fdhilbc$, let $X=\C^2$ and $A=\C$, and consider $\map{f}{A \oplus
    X}{A \oplus X}$ and $\map{g}{X}{X}$ defined by
  \[
    f \; = \; \left(\begin{array}{c|cc} 0 & 1 & 0 \\\hline 1 & 1 & 0\\ 0 & 0 & 1\end{array}\right)
    \qquad \text{and} \qquad
    g \; = \; \pmat{1 & -1 \\ 0 & 1}.
  \]
  Then we find
  \[
  \mptr{X}(\comp{f}{(\id{A} \oplus g)}) = 0 \qquad \text{and} \qquad
  \mptr{X}(\comp{(\id{A} \oplus g)}{f}) = -1,
  \]
  violating dinaturality (one of the laws of traces)
  {\cite{Malherbe-Scott-Selinger}}.
\end{warning}

\begin{lemma}[Pseudotrace and kernel-image trace]\label{lem:kerim_is_mp}
  In a dagger additive category, whenever the
  pseudotrace and kernel-image trace are both defined, they coincide.
\end{lemma}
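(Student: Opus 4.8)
The plan is to reduce the statement to a single substitution followed by one application of the defining pseudoinverse identity. Abbreviate $g = \id{X} - f\indx{X}{X}$. With this notation the pseudotrace is $\mptr{X} f = f\indx{A}{B} + \comp{f\indx{A}{X}}{\mpi{g}}{f\indx{X}{B}}$, and it is defined exactly when $\mpi{g}$ exists, which we now assume. Simultaneously, the hypothesis that $\kitr{X} f$ is defined supplies, via \cref{def:kernel-image}, arrows $\map{i}{A}{X}$ and $\map{k}{X}{B}$ with $f\indx{A}{X} = \comp{i}{g}$ and $f\indx{X}{B} = \comp{g}{k}$, and then $\kitr{X} f = f\indx{A}{B} + \comp{i}{g}{k}$.

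The heart of the argument is the computation
\[
  \comp{f\indx{A}{X}}{\mpi{g}}{f\indx{X}{B}}
  \;=\; \comp{i}{g}{\mpi{g}}{g}{k}
  \;=\; \comp{i}{(\comp{g}{\mpi{g}}{g})}{k}
  \;=\; \comp{i}{g}{k},
\]
where the first equality substitutes the two defining equations of the kernel-image trace, and the last uses the pseudoinverse law $g = \comp{g}{\mpi{g}}{g}$ from \eqref{eq.mpi}. Adding $f\indx{A}{B}$ to both sides gives $\mptr{X} f = \kitr{X} f$, which is the claim.

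There is no real obstacle here: the entire content is that the two copies of $g$ flanking $\mpi{g}$ in the pseudotrace formula are absorbed by the Moore--Penrose identity once $f\indx{A}{X}$ and $f\indx{X}{B}$ have been factored through $g$ using the kernel-image trace data. The only subtlety worth flagging is that we are free to use the particular witnesses $i$ and $k$ handed to us by \cref{def:kernel-image}, and never need to express them in terms of $\mpi{g}$; consistency of the outcome is then automatic, since $\kitr{X} f$ does not depend on the choice of $i$ and $k$, while $\mptr{X} f$ is well defined by uniqueness of pseudoinverses.
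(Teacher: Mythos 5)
Your proof is correct and is essentially the paper's own argument, just read in the opposite direction: the paper starts from $\kitr{X}f$ and inserts $\mpi{g}$ via $g = \comp{g}{\mpi{g}}{g}$, whereas you start from $\mptr{X}f$ and collapse $\comp{g}{\mpi{g}}{g}$ to $g$ after substituting the kernel-image witnesses $i$ and $k$. No gap; the two computations are identical in content.
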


\begin{proof}
  Let $\map{f}{A \oplus X}{B \oplus X}$ be an arrow with both
  $\kitr{X}f$ and $\mptr{X}f$ defined. Taking $i$ and $k$ as in
  \cref{def:kernel-image}, we have
  \begin{align*}
    \kitr{X}f
    &= f\indx{A}{B} + \comp{i}{(\id{X} - f\indx{X}{X})}{k}\\
    &= f\indx{A}{B} + \comp{i}{(\id{X} - f\indx{X}{X})}{\mpi{(\id{X} - f\indx{X}{X})}}{(\id{X} - f\indx{X}{X})}{k}\\
    &= f\indx{A}{B} + \comp{f\indx{A}{X}}{\mpi{(\id{X} - f\indx{X}{X})}}{f\indx{X}{B}}\\
    &= \mptr{X}f.\qedhere
  \end{align*}
\end{proof}

\section{Non-physicality of the trace?}
\label{app:physical}

One may ask whether the trace operation on Hilbert spaces and
unitaries (or isometries, or contractions) has a physical
interpretation, e.g., whether there is some physical device that can
perform this operation when presented with an input unitary in the
form of a black box. One potential issue is that the trace is not a
continuous operation, i.e., an infinitesimal variation in the input
may cause a large variation in the output. Another potential issue is
that neither the pseudotrace (\cref{def:pseudotrace}) nor the
kernel-image trace is total on infinite dimensional Hilbert spaces,
even when restricted to contractions, isometries, coisometries, or
unitaries. The next two remarks make this precise.

\begin{remark}[Non-continuity of trace]
  The trace on finite dimensional Hilbert spaces with unitary maps is
  not a continuous operation. Take for example the
  $\theta$-parameterized family of rotations
  \[\pmat{
      \cos(\theta) & -\sin(\theta) \\
      \sin(\theta) & \cos(\theta)
    }.
  \]
  The trace along the second row and column is $1$ for $\theta = 0$
  but is $-1$ for $0 < \theta < 2\pi$. On the other hand, the trace is
  continuous on strict contractions, because in that case the
  pseudoinverse in {\eqref{eqn:pseudotrace}} is an actual inverse,
  which is a continuous operation.
\end{remark}

\begin{remark}[Nonexistence of trace in infinite dimensions]
  \label{rem:no-infinite-trace}
  Let $\map{f}{\ell^2}{\ell^2}$ be the contraction on the Hilbert
  space of square-summable sequences that multiplies the $n$th term of
  every sequence by $\frac{1}{n}$. Consider the unitary map
  $\justmap{\ell^2 \oplus \ell^2}{\ell^2 \oplus \ell^2}$ defined by
  \[
  \pmat{
    -(\id{\ell^2} - f) & \sqrt{\id{\ell^2} - (\id{\ell^2} - f)^2} \\[1ex]
    \sqrt{\id{\ell^2} - (\id{\ell^2} - f)^2} & \id{\ell^2} - f
  }.
  \]
  
  The pseudotrace along the second row and column does not
  exist, as $f$ is not pseudoinvertible (i.e., $f$ does not have a
  closed image; see \cref{exa:hilb-pseudo}). Indeed, if there were
  an induced invertible map from the coimage of $f$ (here the entire
  space) to the image of $f$, then its inverse would have to be
  unbounded.
  Neither does the kernel-image trace exist, as $\sqrt{\id{\ell^2} -
    (\id{\ell^2} - f)^2}$ does not factor through $f$. Indeed, if
  there were $k$ with $\sqrt{\id{\ell^2} - (\id{\ell^2} - f)^2} =
  \comp{f}{k}$, then $k$ would have to be unbounded.
\end{remark}

\section{More on pseudoinverses}\label{app:pseudoinverses}

Pseudoinverses arise inevitably in relation to dagger
idempotents. Recall that a morphism of idempotents $\map{f}{p}{q}$ is
an arrow $f$ such that $\comp{p}{f}{q} = f$.
As shown in \cite{Cockett-Lemay}, the pseudoinvertible arrows in any
dagger category $\Cc$ exactly correspond to isomorphisms of dagger
idempotents (that is, isomorphisms in the dagger idempotent completion
of $\Cc$):

\begin{proposition}[Pseudoinverses via dagger idempotents \cite{Cockett-Lemay}]
  \label{prop:mpi_dagsplit}
  In a dagger category, an arrow $\map{f}{A}{B}$ is pseudoinvertible
  if and only if there are dagger idempotents $\map{p}{A}{A}$ and
  $\map{q}{B}{B}$ such that $f$ is an isomorphism of dagger idempotents
  $\map{f}{p}{q}$.
  Furthermore, the inverse isomorphism of dagger
  idempotents $\justmap{q}{p}$ is given by $\mpi{f}$, and we have
  $p = \comp{f}{\mpi{f}}$ and $q = \comp{\mpi{f}}{f}$.
\end{proposition}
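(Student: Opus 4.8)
The plan is to prove both implications by writing down the witnessing data explicitly, so that the ``furthermore'' clause comes out for free: in the forward direction the dagger idempotents will be $p=\comp{f}{\mpi{f}}$ and $q=\comp{\mpi{f}}{f}$ and the inverse isomorphism will be $\mpi{f}$ itself, while in the backward direction the pseudoinverse will be the given inverse isomorphism. The one auxiliary fact I would isolate first is the pair of \emph{absorption identities}: if $h$ is a morphism of idempotents $p\to q$, i.e.\ $\comp{p}{h}{q}=h$, then idempotence of $p$ and $q$ forces $\comp{p}{h}=h$ and $\comp{h}{q}=h$. Every verification below then reduces to a single rewrite using these together with the four defining equations \eqref{eq.mpi} of the pseudoinverse, and the fact (recorded just before the statement of \cref{prop:svd}) that $\comp{f}{\mpi{f}}$ and $\comp{\mpi{f}}{f}$ are dagger idempotents. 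Throughout, recall that composition and identities in the dagger idempotent completion are taken there: the identity on the object corresponding to $p$ is $p$ itself.

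For the forward implication, assume $f$ is pseudoinvertible and set $p=\comp{f}{\mpi{f}}$ and $q=\comp{\mpi{f}}{f}$, which are dagger idempotents. Applying $f=\comp{f}{\mpi{f}}{f}$ twice gives $\comp{p}{f}{q}=f$, so $f$ is a morphism $p\to q$; applying $\mpi{f}=\comp{\mpi{f}}{f}{\mpi{f}}$ twice gives $\comp{q}{\mpi{f}}{p}=\mpi{f}$, so $\mpi{f}$ is a morphism $q\to p$. Finally $\comp{f}{\mpi{f}}=p$ and $\comp{\mpi{f}}{f}=q$ hold by the very definitions of $p$ and $q$, and these two equations say exactly that $\mpi{f}$ is a two-sided inverse of $f$ in the dagger idempotent completion. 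This establishes the forward implication and the last two displayed equations simultaneously.

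For the backward implication, suppose $p$ and $q$ are dagger idempotents and $f\colon p\to q$ is an isomorphism of dagger idempotents; let $g\colon q\to p$ be its inverse, so $\comp{f}{g}=p$ and $\comp{g}{f}=q$, and the absorption identities hold for both $f$ and $g$. I claim $g$ is a pseudoinverse of $f$. Indeed $\comp{f}{g}{f}=\comp{p}{f}=f$ and $\comp{g}{f}{g}=\comp{q}{g}=g$ by absorption, and $\comp{f}{g}=p$ and $\comp{g}{f}=q$ are self-adjoint because $p$ and $q$ are dagger idempotents, so all of \eqref{eq.mpi} holds with $\mpi{f}:=g$. Hence $f$ is pseudoinvertible, and by uniqueness of pseudoinverses (\cref{prop:mp_unique}) we conclude $\mpi{f}=g$; the equations $p=\comp{f}{\mpi{f}}$ and $q=\comp{\mpi{f}}{f}$ are then precisely $\comp{f}{g}=p$ and $\comp{g}{f}=q$.

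I do not anticipate a real obstacle: the content of the proof is the absorption lemma plus bookkeeping. The only thing demanding care is correctly unwinding what ``isomorphism of dagger idempotents'' means — in particular that the relevant identity arrows are $p$ and $q$, not $\id{A}$ and $\id{B}$ — and keeping the paper's diagrammatic composition order consistent; once that is pinned down, each step is a one-line calculation.
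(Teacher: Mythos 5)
Your proof is correct and follows essentially the same route as the paper's: both directions are verified by direct computation with the Penrose equations, taking $p=\comp{f}{\mpi{f}}$ and $q=\comp{\mpi{f}}{f}$ in the forward direction and checking equations \eqref{eq.mpi} for the inverse isomorphism in the backward direction. The only difference is presentational: you isolate the absorption identities $\comp{p}{h}=h=\comp{h}{q}$ as an explicit lemma and cite \cref{prop:mp_unique} explicitly, steps the paper leaves implicit.
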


\begin{proof}
  To say $\map{f}{p}{q}$ is an isomorphism of dagger idempotents with
  inverse $\map{g}{q}{p}$ means $f = \comp{p}{f}{q}$ and
  $g = \comp{q}{g}{p}$ with $\comp{f}{g} = p$ and $\comp{g}{f} =
  q$. Since $p$ and $q$ are dagger idempotents, we have
  $(\comp{f}{g})^\dag = \comp{f}{g}$ and
  $(\comp{g}{f})^\dag = \comp{g}{f}$. Moreover $\comp{f}{g}{f} = f$
  and $\comp{g}{f}{g} = g$, so $g = \mpi{f}$.

  Conversely, assume $f$ is pseudoinvertible and let
  $p = \comp{f}{\mpi{f}}$ and $q = \comp{\mpi{f}}{f}$. We have that
  $\map{f}{p}{q}$ and $\map{\mpi{f}}{q}{p}$ are morphisms of dagger
  idempotents, because $f = \comp{f}{\mpi{f}}{f}{\mpi{f}}{f}$ and
  $\mpi{f} = \comp{\mpi{f}}{f}{\mpi{f}}{f}{\mpi{f}}$. The compositions
  $\comp{f}{\mpi{f}}$ and $\comp{\mpi{f}}{f}$ are respectively the
  identities on $p$ and $q$.
\end{proof}

Unlike inverses, pseudoinverses do not in general compose; see
\cref{cex:mpi_no_compose}. Still, we do have the following.

\begin{corollary}[Composition of pseudoinverses]\label{lem:mpi_compose}
  In a dagger category, if $\map{f}{A}{B}$ and $\map{g}{B}{C}$ are
  pseudoinvertible with $\comp{\mpi{f}}{f} = \comp{g}{\mpi{g}}$, then
  $\comp{f}{g}$ is pseudoinvertible with $\mpi{(\comp{f}{g})} =
  \comp{\mpi{g}}{\mpi{f}}$. Moreover, $\comp{\mpi{g}}{\mpi{f}}{f}{g}
  = \comp{\mpi{g}}{g}$ and $\comp{f}{g}{\mpi{g}}{\mpi{f}} =
  \comp{f}{\mpi{f}}$.
\end{corollary}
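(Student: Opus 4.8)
The plan is to obtain this from \cref{prop:mpi_dagsplit}, which identifies the pseudoinvertible arrows of a dagger category with the isomorphisms of dagger idempotents. Isomorphisms always compose, so pseudoinverses should compose whenever the dagger idempotents attached to $f$ and $g$ are compatible, and the hypothesis $\comp{\mpi{f}}{f} = \comp{g}{\mpi{g}}$ is exactly the compatibility we need.

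Concretely, I would first apply \cref{prop:mpi_dagsplit} to $f$ and to $g$ separately. This produces dagger idempotents $p = \comp{f}{\mpi{f}}$ on $A$ and $q = \comp{\mpi{f}}{f}$ on $B$ such that $\map{f}{p}{q}$ is an isomorphism of dagger idempotents with inverse $\map{\mpi{f}}{q}{p}$, together with dagger idempotents $p' = \comp{g}{\mpi{g}}$ on $B$ and $q' = \comp{\mpi{g}}{g}$ on $C$ such that $\map{g}{p'}{q'}$ is an isomorphism with inverse $\map{\mpi{g}}{q'}{p'}$. The hypothesis says precisely that $q = p'$, so $\map{f}{p}{q}$ and $\map{g}{q}{q'}$ are composable isomorphisms of dagger idempotents. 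Next I would check that $\comp{f}{g}$ is then an isomorphism of dagger idempotents $\map{\comp{f}{g}}{p}{q'}$: it is a morphism of idempotents of this type by a short computation from $f = \comp{p}{f}{q}$, $g = \comp{q}{g}{q'}$ and $\comp{q}{q} = q$ (and symmetrically $\comp{\mpi{g}}{\mpi{f}}$ is a morphism $\map{q'}{p}$); and rewriting the middle factor of $\comp{f}{g}{\mpi{g}}{\mpi{f}}$ via $\comp{g}{\mpi{g}} = \comp{\mpi{f}}{f}$ and then applying the pseudoinverse laws \eqref{eq.mpi} gives $\comp{f}{g}{\mpi{g}}{\mpi{f}} = \comp{f}{\mpi{f}} = p$, and likewise $\comp{\mpi{g}}{\mpi{f}}{f}{g} = \comp{\mpi{g}}{g} = q'$, so $\comp{\mpi{g}}{\mpi{f}}$ is a two-sided inverse of $\comp{f}{g}$ as a morphism of dagger idempotents. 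Applying \cref{prop:mpi_dagsplit} once more, $\comp{f}{g}$ is pseudoinvertible with pseudoinverse the inverse isomorphism, namely $\comp{\mpi{g}}{\mpi{f}}$; and the two displayed identities just obtained are precisely the remaining ``moreover'' claims.

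There is no serious obstacle once \cref{prop:mpi_dagsplit} is in hand; the content is entirely in lining up the idempotents, and the only real care needed is the bookkeeping of composition order in verifying that $\comp{f}{g}$ is a morphism $\map{p}{q'}$ with inverse $\comp{\mpi{g}}{\mpi{f}}$. That step genuinely consumes the hypothesis $\comp{\mpi{f}}{f} = \comp{g}{\mpi{g}}$, which cannot be dropped: by \cref{cex:mpi_no_compose}, $\comp{\mpi{g}}{\mpi{f}}$ need not be a pseudoinverse of $\comp{f}{g}$ in general. An alternative, more pedestrian route would be to set $h = \comp{\mpi{g}}{\mpi{f}}$ and verify the four Moore--Penrose equations \eqref{eq.mpi} for the pair $\comp{f}{g}$ and $h$ directly, again using the hypothesis together with $f = \comp{f}{\mpi{f}}{f}$, $g = \comp{g}{\mpi{g}}{g}$ and the self-adjointness of $\comp{f}{\mpi{f}}$, $\comp{\mpi{f}}{f}$, $\comp{g}{\mpi{g}}$, $\comp{\mpi{g}}{g}$; this works but is messier and hides why the hypothesis is the natural one.
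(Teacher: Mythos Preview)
Your proposal is correct and follows exactly the paper's approach: the paper's proof is the one-line observation that this follows from \cref{prop:mpi_dagsplit}, since a composition of isomorphisms of idempotents is again an isomorphism of idempotents. You have simply spelled out the bookkeeping of that sentence in more detail.
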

\begin{proof}
  This follows from \cref{prop:mpi_dagsplit}, as a composition of
  isomorphisms of idempotents is an isomorphism of idempotents.
\end{proof}

In a dagger additive category, we obtain four dagger idempotents of
interest (written in the matrix form of \cref{prop:svd}, assuming that
the dagger splittings exist):
\begin{align*}  
    \comp{\mpi{f}}{f}
    &= \smallblocks
    \begin{blockarray}{ccccccccc}
      &&&&&& \lab{\im f} & \lab{\;\oplus} & \lab{\;(\im f)^\perp\!\!\!\!} \\[0.25ex]
      \begin{block}{ccccc(cccc)}
        \lab{\im f}&&& &&& \justid && 0 \\[0.25ex]
        \mathclap{\lab{\oplus}}\\
        \lab{(\im f)^\perp}&&& &&& 0 && 0 \\
      \end{block}
    \end{blockarray}
      \;
    &\;
      \comp{f}{\mpi{f}}
      &= \smallblocks
      \begin{blockarray}{ccccccccc}
      &&&&&& \lab{(\ker f)^\perp} & \lab{\oplus} & \lab{\;\ker f\;} \\[0.25ex]
      \begin{block}{ccccc(cccc)}
        \lab{(\ker f)^\perp}&&& &&& \!\!\!\!\justid && 0 \\[0.25ex]
        \lab{\oplus}\\
        \lab{\ker f}&&& &&& \!\!\!\!0 && 0 \\
      \end{block}
    \end{blockarray}
    \\
    \justid - \comp{\mpi{f}}{f}
    &=\smallblocks
      \begin{blockarray}{ccccccccc}
        &&&&&& \lab{\im f} & \lab{\;\;\oplus} & \lab{\;(\im f)^\perp\!\!\!\!} \\[0.25ex]
        \begin{block}{ccccc(cccc)}
          \lab{\im f}&&& &&& 0 && 0 \\[0.25ex]
          \lab{\oplus}\\
          \lab{(\im f)^\perp}&&& &&& 0 && \justid \\
        \end{block}
      \end{blockarray}
      \;
    &\;
      \justid - \comp{f}{\mpi{f}}
    &= \smallblocks
      \begin{blockarray}{ccccccccc}
        &&&&&& \lab{(\ker f)^\perp} & \lab{\oplus} & \lab{\;\ker f\;} \\[0.25ex]
        \begin{block}{ccccc(cccc)}
          \lab{(\ker f)^\perp}&&& &&& \!\!\!\!0 && 0 \\[0.25ex]
          \lab{\oplus}\\
          \lab{\ker f}&&& &&& \!\!\!\!0 && \justid \\
        \end{block}
      \end{blockarray}
\end{align*}
\vspace{-0.6cm}

\noindent
They are, respectively, the projections onto the image, the coimage,
the cokernel, and the kernel of $f$.  The following propositions show
that these names are justified.

\begin{proposition}[Image via pseudoinverse]\label{prop:correct_im}
  Let $\map{f}{A}{B}$ be a pseudoinvertible arrow in a dagger category
  such that $\comp{\mpi{f}}{f}$ splits via the mono
  $\map{m}{X}{B}$. Then $m$ is the image of $f$ (i.e., the universal
  subobject through which $f$ factors).
\end{proposition}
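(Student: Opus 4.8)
The plan is a short diagram chase using only the pseudoinverse laws~\eqref{eq.mpi} for the single arrow $f$ together with the splitting equations; no additive or biproduct structure is needed. Write $\map{e}{B}{X}$ for the other half of the splitting, so that $\comp{e}{m}=\comp{\mpi{f}}{f}$ and $\comp{m}{e}=\id{X}$.

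First I would exhibit a factorization of $f$ through $m$: since $f=\comp{f}{\mpi{f}}{f}$ and $\comp{\mpi{f}}{f}=\comp{e}{m}$, we have $f=\comp{f}{e}{m}$, so $f$ factors through $m$ via $\comp{f}{e}\colon A\to X$.

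Next I would check the universal property of the image. Let $\map{n}{Y}{B}$ be any mono with $f=\comp{g}{n}$ for some $\map{g}{A}{Y}$. From $\comp{m}{e}=\id{X}$ we get $m=\comp{m}{e}{m}$, and rewriting $\comp{e}{m}=\comp{\mpi{f}}{f}$ gives $m=\comp{m}{\mpi{f}}{f}$; substituting $f=\comp{g}{n}$ yields $m=\comp{m}{\mpi{f}}{g}{n}$. Hence $m$ factors through $n$ via $\comp{m}{\mpi{f}}{g}\colon X\to Y$, and this comparison map is unique because $n$ is mono. Combined with the previous paragraph, this shows that $m$ is the least subobject of $B$ through which $f$ factors, i.e.\ the image of $f$.

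I do not expect a genuine obstacle here; the only point needing care is the bookkeeping of composition order, in particular keeping straight that it is $\comp{\mpi{f}}{f}$ (and not $\comp{f}{\mpi{f}}$) that projects onto the image, the latter projecting onto the coimage. One should also check that the comparison maps constructed are morphisms of the required type, which is clear since all their constituents compose. Running the same argument with $\comp{f}{\mpi{f}}$ in place of $\comp{\mpi{f}}{f}$ identifies the coimage, and dualizing gives the analogous statements for the cokernel and kernel.
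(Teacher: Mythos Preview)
Your proof is correct and follows essentially the same route as the paper's. The paper compresses the whole argument into a single line: it records the key identity $m=\comp{m}{\mpi{f}}{f}$ and observes that this makes $m$ factor through anything $f$ factors through; the factorization of $f$ through $m$ is left implicit. Your version simply spells out both halves (the factorization $f=\comp{f}{e}{m}$ and the derivation of $m=\comp{m}{\mpi{f}}{f}$ via $m=\comp{m}{e}{m}$) and adds the remark on uniqueness of the comparison map, but there is no difference in substance.
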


\begin{proof}
  We have $m = \comp{m}{\mpi{f}}{f}$, so $m$ factors through every arrow
  that $f$ factors through.
\end{proof}

Note that $\mpi{f}$ and $f^\dag$ have the same image projection,
namely $\comp{f}{\mpi{f}} = \comp{f^\dagmpi}{f^\dag}$. This is also
the coimage projection of $f$ (and of $f^\dagmpi$).  Dually,
$\mpi{f}$ and $f^\dag$ have the same coimage projection, namely
$\comp{\mpi{f}}{f} = \comp{f^\dag}{f^\dagmpi}$, which is also the
image projection of $f$ (and of $f^\dagmpi$).

\begin{proposition}[Kernel via pseudoinverse]\label{prop:kernels}
  Let $\map{f}{A}{B}$ be a pseudoinvertible arrow in a dagger finite
  biproduct category, with $p$ and $\comp{f}{\mpi{f}}$ complementary
  dagger idempotents. Then $f$ has a (dagger) kernel (in the standard
  sense of \cite{Heunen-Karvonen}) if and only if $p$ (dagger)
  splits. The splitting is given by the inclusion map of the kernel.
\end{proposition}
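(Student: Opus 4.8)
The plan is to reduce both directions to two elementary facts about the dagger idempotent $p$, each immediate from the pseudoinverse laws \eqref{eq.mpi} together with complementarity. Put $q:=\comp{f}{\mpi{f}}$, so by hypothesis $p+q=\id{A}$ and $\comp{p}{q}=0=\comp{q}{p}$. \textbf{Fact 1:} $\comp{p}{f}=0$; indeed $\comp{q}{f}=\comp{f}{\mpi{f}}{f}=f$ by \eqref{eq.mpi}, whence $\comp{p}{f}=\comp{p}{q}{f}=0$ because $\comp{p}{q}=0$. \textbf{Fact 2:} every $\map{g}{C}{A}$ with $\comp{g}{f}=0$ satisfies $\comp{g}{p}=g$; indeed $g=\comp{g}{(p+q)}=\comp{g}{p}+\comp{g}{q}$ and $\comp{g}{q}=\comp{g}{f}{\mpi{f}}=0$. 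Taken together, Facts 1 and 2 say that $p$ already \emph{acts} like a kernel projection of $f$ (it is killed by $f$ on the right and it absorbs everything killed by $f$); the content of the proposition is that a genuine (dagger) kernel \emph{object} exists precisely when this $p$ (dagger) splits. I will also use, as in \cite{Heunen-Karvonen}, that a dagger kernel of $f$ is a kernel of $f$ whose inclusion is an isometry, and that kernel inclusions are monic.

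For the ``if'' direction, suppose $p$ dagger splits via an isometry $\map{m}{K}{A}$, so $\comp{m}{m^\dag}=\id{K}$ and $\comp{m^\dag}{m}=p$. I claim $m$ is a dagger kernel of $f$. From $\comp{m^\dag}{m}=p$ and $\comp{m}{m^\dag}=\id{K}$ one gets $\comp{m}{p}=m$, hence $\comp{m}{f}=\comp{m}{p}{f}=0$ by Fact 1. For the universal property, given $\map{g}{C}{A}$ with $\comp{g}{f}=0$, set $h:=\comp{g}{m^\dag}$; then $\comp{h}{m}=\comp{g}{m^\dag}{m}=\comp{g}{p}=g$ by Fact 2, and any $h'$ with $\comp{h'}{m}=g$ satisfies $h'=\comp{h'}{m}{m^\dag}=\comp{g}{m^\dag}=h$ since $\comp{m}{m^\dag}=\id{K}$. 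Thus $m$ is a kernel of $f$, and being an isometry it is a dagger kernel, namely the claimed inclusion map.

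For the ``only if'' direction, suppose $f$ has a dagger kernel $\map{k}{K}{A}$, so $\comp{k}{k^\dag}=\id{K}$ and $\comp{k}{f}=0$. By Fact 1, $\comp{p}{f}=0$, so the universal property of $k$ yields a (unique) $\map{a}{A}{K}$ with $\comp{a}{k}=p$. Post-composing both sides with $k^\dag$ and using $\comp{k}{k^\dag}=\id{K}$ gives $a=\comp{a}{k}{k^\dag}=\comp{p}{k^\dag}$, hence $p=\comp{a}{k}=\comp{p}{k^\dag}{k}=\comp{p}{q'}$, where $q':=\comp{k^\dag}{k}$ is a dagger idempotent. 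On the other hand $\comp{k}{f}=0$ gives $\comp{k}{p}=k$ by Fact 2, so $\comp{q'}{p}=\comp{k^\dag}{(\comp{k}{p})}=\comp{k^\dag}{k}=q'$. Dualising $p=\comp{p}{q'}$ gives $p=\comp{q'}{p}$ (both $p$ and $q'$ are self-adjoint), and comparing with the previous line yields $p=q'=\comp{k^\dag}{k}$. Together with $\comp{k}{k^\dag}=\id{K}$, this says exactly that $k$ is the inclusion of a dagger splitting of $p$.

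I expect the main obstacle to be precisely this last point in the ``only if'' direction: the universal property of $k$ only delivers an a priori purely categorical factorisation $\comp{a}{k}=p$, and it takes the self-adjointness of $p$ and of $\comp{k^\dag}{k}$, together with Fact 2, to upgrade it to the dagger splitting $\comp{k^\dag}{k}=p$. Everything else is bookkeeping with the pseudoinverse laws. The same argument, with the isometry $m$ and its dagger $m^\dag$ replaced by an arbitrary splitting pair and with monicity of $k$ used in place of $\comp{k}{k^\dag}=\id{K}$, gives the non-dagger variant: $f$ has a kernel if and only if $p$ splits, again with the splitting given by the kernel inclusion.
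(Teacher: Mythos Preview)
Your proof is correct. Both your argument and the paper's rest on the same two facts (your Facts 1 and 2), which together say that $\comp{g}{f}=0$ if and only if $\comp{g}{p}=g$; this is the heart of the matter in either case. The paper, however, packages the remaining work differently: rather than verifying the universal property by hand in each direction, it observes that the equivalence $\comp{g}{f}=0\Leftrightarrow\comp{g}{p}=g$ means a kernel of $f$ is the same thing as an equalizer of $p$ and $\id{A}$, and then invokes the standard fact that an equalizer of an idempotent with the identity is exactly a mono splitting that idempotent. The dagger upgrade is then handled by a one-line argument that any isometry splitting a dagger idempotent is automatically a dagger splitting. Your approach has the virtue of being entirely self-contained and making the role of self-adjointness in the ``only if'' direction explicit; the paper's approach is shorter and more conceptual, and treats the non-dagger and dagger cases in a uniform way rather than relegating the non-dagger variant to a closing remark.
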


\begin{proof}
  Observe that for all arrows $\map{m}{X}{A}$, we have $\comp{m}{f}=0$
  if and only if $m = \comp{m}{p}$. Indeed, if $\comp{m}{f}=0$, then
  $m = \comp{m}{(\comp{f}{\mpi{f}} + p)} = \comp{m}{p}$. Conversely,
  if $m = \comp{m}{p}$ then
  $\comp{m}{f} = \comp{m}{p}{f}{\mpi{f}}{f} = 0$. A universal such
  arrow $m$ is equivalently characterized as a kernel of $f$ or as an
  equalizer of $p$ and $\id{A}$. But an equalizer of an idempotent and
  an identity is the same as a mono splitting the idempotent, as
  desired.
  Moreover, a dagger kernel is by definition a kernel that is an
  isometry. It suffices to observe that every splitting of a dagger
  idempotent by an isometry $m$ is a dagger splitting:
  \[
  e = \comp{m}{m^\dag}{e} = \comp{m^\dag}{e^\dag}{m^\dag} = m^\dag.
  \qedhere
  \]
\end{proof}

\begin{lemma}[Pseudoinvertible monos split]\label{lem:mp_split_mono}
  In a dagger category, every pseudoinvertible mono (dually, epi) is
  split by its pseudoinverse.
\end{lemma}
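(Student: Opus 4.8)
The plan is to read the splitting directly off the first Moore--Penrose identity in \eqref{eq.mpi}, using only the defining property of a monomorphism. Let $\map{f}{A}{B}$ be a pseudoinvertible mono, and consider the endomorphism $\comp{f}{\mpi{f}}$ of $A$. The law $\comp{f}{\mpi{f}}{f} = f$ regroups as $\comp{(\comp{f}{\mpi{f}})}{f} = \comp{\id{A}}{f}$, which exhibits $\comp{f}{\mpi{f}}$ and $\id{A}$ as two endomorphisms of $A$ that agree after postcomposition with $f$. Since $f$ is a mono, it cancels on the left, so $\comp{f}{\mpi{f}} = \id{A}$; that is, $\mpi{f}$ is a retraction of $f$, and $f$ is split by $\mpi{f}$.

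For the dual statement, let $\map{f}{A}{B}$ be a pseudoinvertible epi, and this time regroup the same law $\comp{f}{\mpi{f}}{f} = f$ as $\comp{f}{(\comp{\mpi{f}}{f})} = \comp{f}{\id{B}}$, viewing $\comp{\mpi{f}}{f}$ and $\id{B}$ as endomorphisms of $B$ that agree after precomposition with $f$. As $f$ is epi it cancels on the right, so $\comp{\mpi{f}}{f} = \id{B}$, making $\mpi{f}$ a section of $f$; again $f$ is split by $\mpi{f}$. (Alternatively, the epi case is just the mono case applied to $f^\dag$, using $\mpi{(f^\dag)} = (\mpi{f})^\dag$.)

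There is no real obstacle here: the statement is essentially immediate once one notices that the reduced idempotent law $\comp{f}{\mpi{f}}{f} = f$ is a cancellable redundancy as soon as $f$ is mono (respectively, epi). In particular the argument uses neither the self-adjointness clauses of \eqref{eq.mpi} nor any additive or biproduct structure; the only point demanding care is bookkeeping which of $\comp{f}{\mpi{f}}$ and $\comp{\mpi{f}}{f}$ is an endomorphism of $A$ and which of $B$, so that cancellation is applied on the correct side. One could equally run the mono case through \cref{prop:mpi_dagsplit}: there $f$ is an isomorphism of dagger idempotents $\map{f}{p}{q}$ with $p = \comp{f}{\mpi{f}}$, and since $\comp{p}{f} = f = \comp{\id{A}}{f}$, a mono forces $p = \id{A}$.
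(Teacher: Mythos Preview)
Your proof is correct and is essentially the same argument as the paper's: use $\comp{f}{\mpi{f}}{f}=f$ and cancel the mono to obtain $\comp{f}{\mpi{f}}=\id{A}$. The paper states this in one line without spelling out the epi case or the alternative via \cref{prop:mpi_dagsplit}, but the substance is identical.
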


\begin{proof}
  Suppose $\map{f}{A}{B}$ is a pseudoinvertible mono. We have $f =
  \comp{f}{\mpi{f}}{f}$. Thus by cancellation $\comp{f}{\mpi{f}} =
  \id{A}$.
\end{proof}

\begin{proposition}
  Every pseudoinverse dagger additive category in which
  all dagger idempotents split is an abelian category (in the standard
  sense of \cite{MacLane}).
\end{proposition}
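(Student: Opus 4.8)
The plan is to verify the four defining conditions of an abelian category in the sense of \cite{MacLane}: that the category is additive, that every arrow has a kernel and a cokernel, that every monomorphism is a kernel, and that every epimorphism is a cokernel. Additivity is immediate, since a dagger additive category is in particular a finite biproduct category with negatives, hence enriched in abelian groups with a zero object and binary biproducts. So the content lies in the other three conditions. The key tool is that now \emph{every} arrow admits a generalized singular value decomposition: since pseudoinverses exist and all dagger idempotents split, \cref{prop:svd} applies to any $\map{f}{A}{B}$, giving $A = A_1 \oplus A_2$, $B = B_1 \oplus B_2$ and an invertible $\map{a}{A_1}{B_1}$ with $f = \comp{\pi_1^A}{a}{\iota_1^B}$.

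First I would extract kernels and cokernels from this decomposition. Writing $f = \comp{\epsilon}{\iota_1^B}$ with $\epsilon = \comp{\pi_1^A}{a}$, note that $\epsilon$ is an epimorphism (a split epimorphism followed by an isomorphism) and $\iota_1^B$ a monomorphism, so $\ker f = \ker \epsilon$; and since $a$ is invertible, $\ker \epsilon = \ker \pi_1^A = \iota_2^A$. Thus $\ker f$ exists and equals the biproduct injection $\map{\iota_2^A}{A_2}{A}$, and dually $\mathrm{coker}\, f$ exists and equals $\map{\pi_2^B}{B}{B_2}$. Here one uses only the elementary facts, valid in any additive category, that a biproduct injection is the kernel of the complementary projection (and dually), and that composing a kernel with an isomorphism of its domain again yields a kernel.

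Next I would show every monomorphism is a kernel. Let $\map{m}{A}{B}$ be monic. Every arrow is pseudoinvertible, so by \cref{lem:mp_split_mono} $m$ is split by its pseudoinverse: $\comp{m}{\mpi{m}} = \id{A}$. In the decomposition of $m$ from \cref{prop:svd}, the composite $\comp{m}{\mpi{m}}$ is the coimage projection $\psmall{\justid & 0 \\ 0 & 0}$ on $A_1 \oplus A_2$, so the equation $\comp{m}{\mpi{m}} = \id{A}$ forces $\id{A_2} = 0$, i.e. $A_2 \cong 0$. Hence $A \cong A_1$ and $m = \comp{a}{\iota_1^B}$ is an isomorphism followed by a biproduct injection; since $\iota_1^B$ is the kernel of $\pi_2^B$, and $\pi_2^B$ is (up to isomorphism) $\mathrm{coker}\, m$ by the previous paragraph, $m$ is the kernel of its cokernel. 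The dual argument — using that a pseudoinvertible epimorphism is split by its pseudoinverse, so the second codomain summand in its singular value decomposition vanishes — shows that every epimorphism is, up to isomorphism, a biproduct projection, hence the cokernel of its kernel. This establishes all four conditions.

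The argument is mostly bookkeeping once \cref{prop:svd} and \cref{lem:mp_split_mono} are in hand; the step requiring the most care is matching the abstract direct-sum data of the singular value decomposition with the universal properties of (co)kernels — concretely, verifying that biproduct injections and projections really are kernels and cokernels of the complementary maps, and that the isomorphism $a$ can be absorbed without disturbing universality. No new use of $\dagger$-structure is needed beyond what is already packaged into \cref{prop:svd} and \cref{lem:mp_split_mono}.
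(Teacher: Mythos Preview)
Your argument is correct. The paper's proof is considerably terser and routes through \cref{prop:kernels} rather than \cref{prop:svd}: it observes that \cref{prop:kernels} immediately gives the existence of all kernels (since every complement idempotent is a dagger idempotent, hence splits), and then shows directly that a mono $m$ is a kernel of the dagger idempotent $1-\comp{\mpi{m}}{m}$, using \cref{lem:mp_split_mono} to see that $m$ itself splits the complementary idempotent $\comp{\mpi{m}}{m}$. Your approach via the singular value decomposition is more explicit---you literally exhibit the kernel as a biproduct injection and identify a mono with such an injection up to isomorphism---whereas the paper leaves the decomposition implicit inside \cref{prop:kernels}. The two arguments are really the same idea in different packaging: \cref{prop:kernels} is essentially the kernel-extraction half of the SVD, so invoking the full SVD as you do is slightly more than is needed, but it makes the geometry (``every arrow is a projection--iso--injection, every mono is an iso--injection'') more visible.
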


\begin{proof}
  By \cref{prop:kernels}, all kernels exist. Moreover, every
  mono $m$ is normal as $m$ is a kernel of
  $1 - \comp{\mpi{m}}{m}$, using both \cref{prop:kernels} and
  \cref{lem:mp_split_mono}. Dually, all cokernels exist and every
  epi is normal.
\end{proof}

Although this paper is about pseudoinverse dagger additive categories,
so far we have not given many examples.
\begin{example}
  Let $\F$ be any dagger subfield of $\C$ (i.e., a subfield closed
  under conjugation). Then $\mat{\F}$ is a pseudoinverse dagger
  additive category. Indeed, it was observed in \cite{Pearl} that the
  pseudoinverse of a matrix $f$ over an arbitrary dagger field exists
  if and only if
  $\rank(f) = \rank(\comp{f}{f^\dag}) = \rank(\comp{f^\dag}{f})$,
  and the rank of a matrix does not change upon passing to a larger
  field.
\end{example}

\begin{lemma}\label{lem:rational}
  In a pseudoinverse dagger additive category, $\Q$ embeds into the
  endomorphism ring at each non-zero object.
\end{lemma}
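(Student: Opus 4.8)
The plan is to show that at every non-zero object $A$, the endomorphism ring $\Cc(A,A)$ contains a copy of $\Q$. Since the category is additive, this ring already contains a copy of $\Z$ via $n \mapsto n\cdot\id{A}$, so it suffices to show that every nonzero integer multiple $n\cdot\id{A}$ is invertible in $\Cc(A,A)$; then the subring generated by $\id{A}$ will be a quotient of $\Q$, and because $A\neq 0$ we have $\id{A}\neq 0$ (if $\id{A}=0$ then $A\iso 0$), so the subring is all of $\Q$ rather than a finite quotient. Equivalently, I want: for each positive integer $n$, the scalar $n\cdot\id{A}$ has a two-sided inverse in $\Cc(A,A)$.

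The key idea is to use pseudoinverses together with the characterization of invertibility built into them. First I would observe that $n\cdot\id{A}$ is self-adjoint (it is a sum of copies of $\id{A}$, and $\id{A}^\dag=\id{A}$), and more importantly it is positive: $n\cdot\id{A} = \comp{\Delta}{\Delta^\dag}$ where $\Delta\colon A\to A^{\oplus n}$ is the diagonal map $\psmall{\id{A}\\\vdots\\\id{A}}$, since $\comp{\Delta}{\Delta^\dag} = \sum_{i=1}^n \id{A} = n\cdot\id{A}$. Now let $g = \mpi{(n\cdot\id{A})}$ be the pseudoinverse, which exists by hypothesis. The plan is to show $g = \tfrac1n\id{A}$ in the sense that $\comp{(n\cdot\id{A})}{g} = \id{A}$, i.e.\ that $n\cdot\id{A}$ is actually invertible and not merely pseudoinvertible. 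By the pseudoinverse laws \eqref{eq.mpi}, $p := \comp{(n\cdot\id{A})}{g}$ is a dagger idempotent, and I claim $p=\id{A}$. To see this, note that $n\cdot\id{A}$ commutes with everything in $\Cc(A,A)$, in particular with $g$ and with $p$; since $\id{A} - p$ is the complementary dagger idempotent (the category has negatives), we have $\comp{(n\cdot\id{A})}{(\id{A}-p)} = (\id{A}-p)\cdot(n\cdot\id{A})\cdot\ldots$ — more directly, $(\id{A}-p)$ is the kernel projection of $n\cdot\id{A}$, and $\comp{(n\cdot\id{A})}{(\id{A}-p)} = n\cdot\id{A} - \comp{(n\cdot\id{A})}{g}{(n\cdot\id{A})} = n\cdot\id{A} - n\cdot\id{A} = 0$ by \eqref{eq.mpi}. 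So the dagger idempotent $q := \id{A}-p$ satisfies $n\cdot q = \comp{(n\cdot\id{A})}{q} = 0$, hence $q = \comp{q}{q}{\cdots}$ — I need one more step to conclude $q=0$ from $n\cdot q = 0$.

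The main obstacle, then, is precisely this cancellation: from $n\cdot q = 0$ with $q$ a dagger idempotent, conclude $q = 0$. I would handle it using definiteness, which holds here by \cref{lem:zero} (every pseudoinverse dagger additive category with a zero object is definite — and an additive category has a zero object). The trick is that $n\cdot\id{A}\geq \id{A}$: indeed $n\cdot\id{A} = \id{A} + (n-1)\cdot\id{A}$ and $(n-1)\cdot\id{A}$ is positive by the same diagonal-map argument as above, so $\id{A}\leq n\cdot\id{A}$. Conjugating by $q$ via \cref{lem:conjugate}(a) gives $\comp{q^\dag}{\id{A}}{q} \leq \comp{q^\dag}{(n\cdot\id{A})}{q}$, i.e.\ $\comp{q}{q} \leq n\cdot q$ (using $q^\dag=q$), i.e.\ $q\leq 0$. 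But $q$ is positive (it is a dagger idempotent, $q = \comp{q}{q^\dag}$), so $q\leq 0$ and $0\leq q$ together force $q + a = 0$ and $a$ positive for some $a$; writing $a = 0 - q$ wait — more carefully, $q\leq 0$ means $0 = q + a$ with $a$ positive, and then $q = -a$; but also $q$ positive, so $q = \comp{h}{h^\dag}$, giving $\comp{h}{h^\dag} + a = 0$, a sum of positives equal to $0$, forcing $\comp{h}{h^\dag}=0$, hence $h=0$ by definiteness, hence $q = \comp{h}{h^\dag} = 0$. Therefore $p = \id{A}$, so $\comp{(n\cdot\id{A})}{g} = \id{A}$; symmetrically $\comp{g}{(n\cdot\id{A})} = \id{A}$ (the argument is left–right symmetric, or use that $n\cdot\id{A}$ is self-adjoint so $\comp{g}{(n\cdot\id{A})} = (\comp{(n\cdot\id{A})}{g^\dag})^\dag$ and $g^\dag = \mpi{((n\cdot\id{A})^\dag)} = g$). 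Thus $n\cdot\id{A}$ is invertible with inverse $g$, the subring of $\Cc(A,A)$ generated by $\id{A}$ is a copy of $\Q$ (a field, since all nonzero integers are inverted, and not the zero ring since $\id{A}\neq0$), and this $\Q$ is central since $\id{A}$ is central. $\qed$
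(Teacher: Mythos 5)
Your proof is correct, but it takes a genuinely different route from the paper's. The paper works with the diagonal $\map{\delta}{A}{A^n}$ itself: permuting the factors of $A^n$ fixes $\delta$, so uniqueness of pseudoinverses forces $\mpi{\delta}=\pmat{d~\cdots~d}$ for a single $d$, and since $\delta$ is a (split) mono, \cref{lem:mp_split_mono} gives $\comp{\delta}{\mpi{\delta}}=\id{A}$, i.e.\ $n\cdot d=\id{A}$ at once --- no positivity, order, or definiteness is needed. You instead take the pseudoinverse $g$ of the central map $n\cdot\id{A}$ and show it is an actual inverse by proving that the dagger idempotent $q=\id{A}-\comp{(n\cdot\id{A})}{g}$ vanishes, using $\id{A}\leq n\cdot\id{A}$, \cref{lem:conjugate}, and definiteness via \cref{lem:zero}; the diagonal map enters only to certify positivity. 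This is longer, but it stays close to the defining equations \eqref{eq.mpi} and avoids the symmetry/uniqueness step, which is a fair trade. One step you gloss over is that a sum of two positives equal to $0$ forces each to be $0$: this does need a word, e.g.\ writing $q=\comp{h}{h^\dag}$ and $a=\comp{k}{k^\dag}$, the column map $m=\psmall{h\\k}$ satisfies $\comp{m}{m^\dag}=q+a=0$, so $m=0$ by definiteness and hence $q=0$. (In fact you could bypass the order argument entirely: from $n\cdot q=0$, the column $m=\psmall{q\\\vdots\\q}\colon A\to A^n$ has $\comp{m}{m^\dag}=n\cdot q=0$, so $q=0$ directly by definiteness.) Finally, your closing remark about ``a finite quotient'' is a slip --- the only proper quotient of $\Q$ is the zero ring --- but your conclusion is exactly the paper's: since $\id{A}\neq 0$ at a non-zero object, the canonical homomorphism $\Q\to\Cc(A,A)$ is injective.
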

\begin{proof}
  Let $n \in \N_{\geq 1}$, and let $\map{\delta}{A}{A^n}$ be the
  canonical $n$-ary diagonal map. By symmetry, we have $\mpi{\delta} =
  \pmat{d ~ \cdots ~ d}$ for some $\map{d}{A}{A}$. Since $\delta$ is
  mono, by \cref{lem:mp_split_mono}, we have
  $\comp{\delta}{\mpi{\delta}} = \id{A}$, hence $n \cdot d =
  \id{A}$. Thus $n\cdot \id{A}$ has a multiplicative inverse. As $\Q$
  is the universal ring in which every natural number has a
  multiplicative inverse, and $\Q$ has no proper quotients, the result
  follows.
\end{proof}

\begin{example}[Free pseudoinverse dagger additive categories]
  Since pseudoinverse dagger additive categories are essentially
  algebraic, we can consider freely generated structures. It follows
  from \cref{lem:rational} that the free pseudoinverse dagger additive
  category on an object is equivalent to $\mat{\Q}$. But more exotic
  examples exist, such as the free pseudoinverse dagger category on an
  object with an endomorphism; see \cref{cex:non-field}.
\end{example}

\section{Counterexamples}
\label{app:counterexamples}
This section consists of several counterexamples, which preclude
various strengthenings of results in the paper.  Our main theorem
gives a sufficient condition for the monoidal subcategory of
contractions in a dagger additive category to be traced: the existence
of pseudoinverses. However, this is not a necessary condition.
\begin{counterexample}[Trace without
  pseudoinverses]\label{cex:trace-without-pseudo}
  The kernel-image trace on the dagger additive category $\mat{\Z}$ of
  integer-valued matrices is totally defined on contractions. Indeed,
  since contractions are equivalently submatrices of unitaries, they
  are the matrices with entries in $\set{-1,0,1}$ with at most one nonzero
  entry per row and per column, and one may check that all
  kernel-image traces of such matrices exist.
  However, not all arrows (even those of the form $\justid - f$ with
  $f$ a contraction) are pseudoinvertible, e.g., the matrix
  $\pmat{2}$.
\end{counterexample}

Given the examples we have seen so far, it is reasonable to ask
whether all pseudoinverse dagger additive categories are sub dagger
additive categories of matrices over the complex numbers. However,
this is not the case.
\begin{counterexample}[Non complex matrix pseudoinverse dagger
  additive category]\label{cex:non-field}
  The free pseudoinverse dagger additive category on an object $*$ and
  an arrow $\map{f}{*}{*}$ does not embed into any dagger additive
  category of matrices over a field. Indeed, given an endomorphism $f$
  of a finite dimensional vector space, the image eventually
  stabilizes with repeated application, i.e. (assuming relevant
  pseudoinverses exist),
  $\comp{\mpi{(f^n)}}{f^n} = \comp{\mpi{(f^{n+1})}}{f^{n+1}}$ for some
  $n$. But such $n$ can be arbitrarily high, so in the free instance
  this cannot happen for any particular $n$.
\end{counterexample}

We saw in \cref{rem:isometry_component} that in a dagger additive
category, every isometry is a component of a unitary. If moreover all
dagger idempotents dagger split, then every isometry is a
\emph{column} of a unitary, using \cref{lem:proj_decompose}. We note
that this stronger statement does not hold without the assumption of
dagger splittings.
\begin{counterexample}[Non unitary-column isometry]
  Consider the full dagger finite biproduct subcategory of $\fdhilbc$
  of spaces with dimension not equal to $1$. The inclusion of
  a $2$-dimensional subspace into a $3$-dimensional space is then not a
  column of a unitary.
\end{counterexample}

We saw in \cref{cor:maxed} that for a contraction in a definite dagger
additive category, any row or column with a $1$ has all other entries
$0$. This does not hold without assuming definiteness.

\begin{counterexample}[Non maxed-out row]
  In $\mat{\F_2}$, the matrix $\pmat{1 ~ 1 ~ 1}$
  is a coisometry.
\end{counterexample}

Next, we give some more counterexamples relating to
pseudoinverses. Pseudoinverses do not compose: in general we do not
have $\mpi{(\comp{f}{g})} = \comp{\mpi{g}}{\mpi{f}}$. However, this
does hold when the image projection of $f$ and the coimage projection
of $g$ coincide, as in \cref{lem:mpi_compose}. We observe that it is
not sufficient to merely assume that the image projection of $f$
factors through the coimage projection of $g$.

\begin{counterexample}[Non composition of pseudoinverses]\label{cex:mpi_no_compose}
  Consider the dagger idempotent $p = \psmall{1 & 0 \\ 0 & 0}$ and the
  invertible matrix $a = \psmall{1 & 1 \\ 0 & 1}$ in $\fdhilbc$. We
  have $\comp{p}{a} = p$, and thus
  $\mpi{(\comp{p}{a})} = \mpi{p} = p$, whereas
  $\comp{\mpi{a}}{\mpi{p}} = \comp{a\inv}{p} = \psmall{1 & -1 \\ 0 &
    0}$.
\end{counterexample}

On the other hand, the next example shows that it is still possible to
have $\mpi{(\comp{f}{g})} = \comp{\mpi{g}}{\mpi{f}}$ in cases where
the image projection of $f$ and coimage projection of $g$ are
incompatible (do not even commute); in particular, the sufficient
condition given in \cref{lem:mpi_compose} is not necessary. (See
\cite{Cockett-Lemay} for a necessary and sufficient condition for
pseudoinverses to compose.)

\begin{counterexample}[Composition of pseudoinverses]
  In the dagger category of finite sets and relations (i.e.,
  boolean-valued matrices), the pseudoinverse of the idempotent
  $p = \psmall{1 & 0 \\ 1 & 0}$ is the idempotent
  $\mpi{p} = \psmall{1 & 1 \\ 0 & 0}$. Hence
  $\comp{\mpi{(pp)}} = \comp{\mpi{p}}{\mpi{p}}$. However, the image
  projection $\comp{\mpi{p}}{p} = \psmall{1 & 1 \\ 1 & 1}$ does not
  commute with the coimage projection
  $\comp{p}{\mpi{p}} = \psmall{1 & 0 \\ 0 & 0}$.
\end{counterexample}

We saw in \cref{lem:mp_split_mono} that every pseudoinvertible mono is
a split mono. However, the converse does not hold in general.
\begin{counterexample}[Non pseudoinvertible split mono]
  In $\mat{\Z}$, $m = \psmall{1 \\ 1}$ is a split mono. If its
  pseudoinverse existed, it would also be the pseudoinverse in
  $\mat{\Q}$. However, the pseudoinverse in $\mat{\Q}$ is
  $\pmat{\frac{1}{2}~\frac{1}{2}}$, which is not in $\mat{\Z}$.
\end{counterexample}

The following counterexamples show that when we do not assume the
existence of all pseudoinverses, the pseudotrace
(\cref{def:pseudotrace}) may be defined in cases where the
kernel-image trace is undefined or vice versa (even restricting to
unitaries).

\begin{counterexample}[Non pseudotrace kernel-image trace]
  In $\mat{\Z}$, the unitary $\psmall{1 & 0 \\ 0 & -1}$ does not have
  a pseudotrace along the second row and column, as $1 - (-1) = 2$ is
  not pseudoinvertible. It does have a kernel-image trace equal to
  $1 + 0(2)0 = 1$.
\end{counterexample}

\begin{counterexample}[Non kernel-image trace pseudotrace]
  In $\mat{\Z[x]/\langle x^2 \rangle}$, the unitary
  $\psmall{-1 & x \\ x & 1}$ has pseudotrace along the second row and
  column, as $1 - 1 = 0$ is pseudoinvertible. It does not have a
  kernel-image trace, as $x$ is not of the form $\comp{i}{0}$ for any
  $i$.
\end{counterexample}

Finally, we give several counterexamples that show certain results about
dagger additive categories do not hold in the absence of negatives. It
follows from \cref{prop:kernels} that any isometry $m$ in a dagger
additive category is the dagger kernel of $1 -
\comp{m^\dag}{m}$. However, isometries need not be kernels if we do
not assume the existence of negatives.

\begin{counterexample}[Non kernel isometry]
  In the dagger finite biproduct category of finite sets and relations
  (i.e., boolean-valued matrices), the isometry $\psmall{1 \\ 1}$ is
  not a kernel.
\end{counterexample}

We saw in \cref{lem:proj_decompose} that complementary split
idempotents are tantamount to direct sum decompositions. In an
additive category, idempotents $p$ and $q$ are complementary if and
only if $p + q = 1$, which does not hold in the absence of
negatives.
\begin{counterexample}[Non direct sum idempotent sum]
  Consider the (dagger) finite biproduct category of finite sets and
  relations. Letting $p = q = \id{\set{*}}$, we have $p + q =
  \id{\set{*}}$, but $pq\neq 0$. Thus, $p$ and $q$ are not
  complementary. Also, both $p$ and $q$ are split via $\set{*}$, but
  $\set{*}$ is not isomorphic to $\set{*} \oplus \set{*}$.
\end{counterexample}

Complementary split idempotents are split by (co)kernels of one
another, by an argument similar to the proof of
\cref{prop:kernels}. Thus each idempotent can be recovered from the
other as the (necessarily unique) idempotent split by its kernel and
cokernel. Hence, in the absence of negatives, it is reasonable to ask
whether idempotents that are split by (co)kernels of one another are
necessarily complementary. However, this is not the case.

\begin{counterexample}[Non complementary mutual (co)kernels]
  In the finite biproduct category of bounded semilattices
  (equivalently, modules over the booleans), consider the following
  semilattice, with evident ``projection-like'' idempotents onto
  the shown sublattices:
  \[
    \qquad
    \begin{tikzpicture}[every node/.style={inner sep=2,font=\footnotesize},rotate=45]
      \node (aa) at (0,0) {$0$};
      \node (ab) at (0,1) {$\bullet$};
      \node (ac) at (0,2) {$\bullet$};
      \node (ba) at (1,0) {$\bullet$};
      \node (bb1) at (.85,.85) {$\bullet$};
      \node (bb2) at (1.15,1.15) {$\bullet$};
      \node (bc) at (1,2) {$\bullet$};
      \node (ca) at (2,0) {$\bullet$};
      \node (cb) at (2,1) {$\bullet$};
      \node (cc) at (2,2) {$\bullet$};
      \draw (aa) -- (ab);
      \draw (ab) -- (ac);
      \draw (ba) -- (bb1);
      \draw (bb2) -- (bc);
      \draw (ca) -- (cb);
      \draw (cb) -- (cc);
      \draw (bb1) -- (bb2);
      \draw (aa) -- (ba);
      \draw (ba) -- (ca);
      \draw (ab) -- (bb1);
      \draw (bb2) -- (cb);
      \draw (ac) -- (bc);
      \draw (bc) -- (cc);
      \begin{scope}[shift={(-1.5,0)}]
        \node (arr) at (.75,1) {\normalsize$\swarrow$};
        \node (aa) at (0,0) {$0$};
        \node (ab) at (0,1) {$\bullet$};
        \node (ac) at (0,2) {$\bullet$};
        \draw (aa) -- (ab);
        \draw (ab) -- (ac);
      \end{scope}
      \begin{scope}[shift={(0,-1.5)}]
        \node (arr) at (1,.75) {\normalsize$\searrow$};
        \node (aa) at (0,0) {$0$};
        \node (ba) at (1,0) {$\bullet$};
        \node (ca) at (2,0) {$\bullet$};
        \draw (aa) -- (ba);
        \draw (ba) -- (ca);
      \end{scope}
    \end{tikzpicture}
  \]
  These idempotents are split by (co)kernels of one another, but they
  are not complementary.
\end{counterexample}

The kernel-image trace is a partial trace on any additive category. It
is reasonable to ask whether the same formula works in an arbitrary
finite biproduct category, simply leaving the trace undefined where
the relevant subtraction is not defined. However, this does not give a
partial trace in general.
\begin{counterexample}[Kernel-image non trace]
  Consider $\mat{\N[x, y] /\langle \mult{x}{y} \rangle}$. The matrix
  $\pmat{\mult{x}{y}} = 0$ has a negative, so the kernel-image trace
  formula (tracing out the entire input and output) $0 + 0(1 - 0)0$ is
  defined. On the other hand, the matrix $\pmat{\mult{y}{x}}$ does
  not have a negative, so the kernel-image trace formula is
  undefined. This violates the dinaturality law for partial traces
  {\cite{Malherbe-Scott-Selinger}}.
\end{counterexample}

In \cref{prop:contractions} we saw five equivalent characterizations
of contractions in a dagger additive category. However, these
conditions are not equivalent in a mere dagger finite biproduct
category (i.e., without assuming negatives). In fact, they are all
distinct, with implications between them as follows:
\[
  \begin{tikzpicture}
    \node (a) at (0,-1) {$\ref{item:c1}$};
    \node (b) at (-1,0) {$\ref{item:c2}$};
    \node (c) at (1,0) {$\ref{item:c3}$};
    \node (d) at (0,1) {$\ref{item:c4}$};
    \node (e) at (0,2) {$\ref{item:c5}$};
    \draw [imp] (a) -- (b);
    \draw [imp] (a) -- (c);
    \draw [imp] (b) -- (d);
    \draw [imp] (c) -- (d);
    \draw [imp] (d) -- (e);
  \end{tikzpicture}
\]

To distinguish them, it suffices to see that $\ref{item:c2}$ is distinct from
$\ref{item:c3}$ and that $\ref{item:c4}$ is distinct from $\ref{item:c5}$; it is then clear that the
self-dual definitions are distinct from the non-self-dual definitions.  To
say $\ref{item:c3}$ is distinct from $\ref{item:c4}$ means that contractions are not the
same as cocontractions.
\begin{counterexample}[Non cocontraction contraction]\label{cex:non-cocontraction}
  Consider the dagger finite biproduct category of finite sets and
  relations (i.e., boolean-valued matrices). The isometries are the
  matrices featuring at least one $1$ per column and at most one $1$
  per row. The matrix $\psmall{1 \\ 1}$ is an isometry, thus a
  contraction, but not a component of a coisometry, thus not a
  cocontraction.
\end{counterexample}

To say $\ref{item:c4}$ is distinct from $\ref{item:c5}$ means that not every coisometry
followed by an isometry is equal to an isometry followed by a
coisometry.
\begin{counterexample}[Non isometry then coisometry coisometry then
  isometry]\label{cex:non-iso-then-coiso}
  Consider the free dagger rig on an isometry $x$, i.e.,
  $\N[x, x^\dag]\langle \mult{x^\dag}{x} = 1 \rangle$. Its elements
  have explicit normal forms, as finite expressions
  $\sum_{i,j \geq 0} n_{i,j}\mult{x^j}{(x^\dag)^i}$. In the
  corresponding dagger finite biproduct category of matrices, the
  isometries are the matrices having entries in
  $\set{0, 1, x, x^2, \ldots}$ with one nonzero entry per column and
  at most one nonzero entry per row. Clearly the matrix
  $\pmat{\mult{x}{x^\dag}}$ cannot be expressed as an isometry
  followed by a coisometry.
\end{counterexample}

\end{document}